\newtheorem{thm}{Theorem}[section]
\newtheorem{prop}[thm]{Proposition}
\newtheorem{cor}[thm]{Corollary}
\newtheorem{lem}[thm]{Lemma}
\theoremstyle{definition}
\newtheorem*{ack}{Acknowledgments}
\newtheorem*{brem}{Remark}
\newtheorem*{brems}{Remarks}
\newcommand{\ZZ}{\mathbb{Z}}
\newcommand{\Q}{\mathbb{Q}}
\newcommand{\Ker}{{\mathrm{Ker}}}
\newcommand{\Gal}{\mbox{\mbox{\rm Gal}}}
\newcommand{\Irr}{\mbox{\mbox{\rm Irr}}}
\newcommand{\Ind}{\mbox{\mbox{\rm Ind}}}
\begin{document}
% title ----------------------------------------------------------

\title[]%
{On group rings of the simple group of order 168, 504 or 360 and their modules}
\author[Y.~Konomi]{Yutaka Konomi}
\address{Department of Mathematics, Meijo University, Tempaku-ku, Nagoya, 468-8502, Japan}
\email{konomi@meijo-u.ac.jp}

\keywords{Simple groups; Modules; Artin $L$-functions}
\subjclass[2020]{20C05, 11R23, 11R42}

\begin{abstract}
Let $p$ be a prime and $\ZZ_p$ the ring of $p$-adic integers.
Let $G$ denote the simple group of order 168, 504 or 360.
% The aim of this paper is to study the structure of the $\chi$-part of 
% a $\ZZ_p[\mathrm{Im}(\chi)][G]$-module for any irreducible character $\chi$ of G, 
% considering relations between Artin $L$-functions and ideal class groups.
In this paper, we study the structure of the $\chi$-part of a $\ZZ_p[\mathrm{Im}(\chi)][G]$-module come from
ideal class groups, Artin $L$-functions and Iwasawa theory. 
%for any irreducible character $\chi$ of $G$ to find out a relation between Artin $L$-functions and ideal class groups. 

\end{abstract}

\maketitle

%#!lualatex ZZpG.tex

\section{Introduction and the main results}
Let $p$ denote a prime number, $\ZZ_p$ the ring of $p$-adic integers and
$\mathbb{Q}_p$ the field of $p$-adic numbers.
Let $G$ be a finite group
and assume that $p$ does not divide $\# G$.
Fix an embedding $\overline{\Q}^\times \hookrightarrow \overline{\Q_p}^\times$
and regard any character as $p$-adic one.
Let $\Irr(G)$ be the set of all irreducible characters of $G$. 
We write the degree of a character $\chi$ of $G$ by $\deg(\chi)$.
Putting
$$e_\chi^G = \frac{\chi(1^G)}{\# G}\sum_{g\in G} \chi(g^{-1})g,$$
we have a decomposition into orthogonal idempotents 
$1^G = \sum_{\chi\in\Irr(G)} e_\chi^G$ in $\ZZ_p[\zeta_k][G]$,
where $k$ is an appropriate divisor of $\# G$ and 
$\zeta_k$ is a $k$-th root of unity. 
Unless otherwise noted, we deal with left modules.
When $M$ is a $\ZZ_p[\zeta_k][G]$-module,
we obtain the direct decomposition 
$$M = \bigoplus_{\chi\in\Irr(G)} e_\chi^G M.$$
Let $M^{(n)}$ denote the direct sum of $n$ copies of a module $M$,
that is, $$M^{(n)}=\bigoplus_{i=1}^n M.$$

First, we state the main result.
\begin{thm}\label{MainTh1}
Let $G$ denote the simple group of order $168$, $504$ or $360$. 
Assume that $p$ does not divide $\# G$.
%Set $k=3 \cdot 7$ if $7$ divides $\# G$; otherwise, $k=2^2\cdot 3\cdot5$.
If $7$ divides $\#G$, set $k=3\cdot 7$; otherwise, set $k=2^{2}\cdot 3\cdot 5$.
Let $M$ be a $\ZZ_p[\zeta_k][G]$-module.

For all $\chi \in \Irr(G)$, if $\#G\neq 360$ or $\deg(\chi)\neq 8$,
there exist a submodule $N$ of $M$ and a suitable Brauer induction
$$\chi=\sum_{i=1}^m a_i \Ind_{H_i}^G(\psi_i)$$
such that the following two conditions hold 
except for a decidable finite number of primes:
%for all but finitely many primes $p$: 
%except for a finite number of primes:
\begin{enumerate}
\setlength{\itemsep}{5pt}
\item $e_\chi^G M \simeq N^{(\deg\chi)}$ as $\ZZ_p$-modules.
\item If $M$ is finite, then
$$\# N = 
\prod_{i=1}^m \left(\# e_{\psi_i}^{H_i} M\right)^{a_i}.$$ 
\end{enumerate}
Here,
$\psi_i$ and $a_i$ are 
a character of degree 1 of a subgroup $H_i$ of $G$ 
and a rational integer, respectively.
\end{thm}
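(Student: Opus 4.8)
The plan is to reduce the statement to the module theory of the semisimple group ring $\ZZ_p[\zeta_k][G]$ and then to an explicit Brauer induction. Since $p\nmid\#G$, the ring $\ZZ_p[\zeta_k][G]$ is a maximal order, and the choice of $k$ is made so that the values of every $\chi\in\Irr(G)$ occurring in the statement lie in $\Q_p(\zeta_k)$; because $p\nmid\#G$ also forces all local Schur indices to be trivial (no division algebras appear in $\Q_p[G]$, whose reduction mod $p$ is semisimple), one obtains an explicit block decomposition
$$\ZZ_p[\zeta_k][G]\;=\;\prod_{\chi}\,e_\chi^G\ZZ_p[\zeta_k][G],\qquad e_\chi^G\ZZ_p[\zeta_k][G]\;\simeq\;M_{\deg\chi}(\OO_\chi),$$
where $\OO_\chi$ is the ring of integers of $\Q_p(\zeta_k)$, a complete discrete valuation ring. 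Granting this, condition (1) is pure Morita theory: choose a primitive idempotent $\varepsilon$ in the $\chi$-block, given by a matrix unit under the above isomorphism, and set $N:=\varepsilon M$, a $\ZZ_p$-submodule (in fact an $\OO_\chi$-submodule) of $M$. Since every module over $M_{\deg\chi}(\OO_\chi)$ is the $(\deg\chi)$-fold direct sum of its image under $\varepsilon$, we get $e_\chi^G M\simeq N^{(\deg\chi)}$ as $\OO_\chi$-modules, hence as $\ZZ_p$-modules.

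For condition (2) I would fix, for each admissible $\chi$, an explicit Brauer induction $\chi=\sum_{i=1}^m a_i\Ind_{H_i}^G(\psi_i)$ with $H_i\le G$ elementary and $\psi_i$ of degree $1$; the point of taking $k=3\cdot 7$ (resp.\ $k=2^2\cdot 3\cdot 5$) is precisely that, for the elementary subgroups of $G$ that intervene, these $\psi_i$ can be chosen with values in $\ZZ[\zeta_k]$, so that $e_{\psi_i}^{H_i}\in\ZZ_p[\zeta_k][H_i]$ and $e_{\psi_i}^{H_i}M$ is defined. For finite $M$ put $\ell_\chi(M):=\log_p\#N$, which by (1) equals $\tfrac1{\deg\chi}$ times the $p$-adic length of $e_\chi^G M$, and extend $\theta\mapsto\ell_\theta(M)$ $\ZZ$-linearly to virtual characters $\theta$. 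The key lemma is
$$\ell_{\Ind_H^G\psi}(M)\;=\;\log_p\#\bigl(e_\psi^H M\bigr)\qquad(\psi\text{ of degree }1).$$
To prove it I would write $\ell_\chi(M)=\log_p\#\Hom_{\ZZ_p[\zeta_k][G]}(T_\chi,M)$, with $T_\chi$ the standard lattice in the $\chi$-block; use that over the maximal order $\ZZ_p[\zeta_k][G]$ every lattice is a direct sum of copies of the $T_\chi$, so that $\log_p\#\Hom_{\ZZ_p[\zeta_k][G]}(-,M)$ is additive on lattices and reads off their multiplicities; and then invoke the adjunction $\Hom_{\ZZ_p[\zeta_k][G]}(\Ind_H^G T_\psi,M)\simeq\Hom_{\ZZ_p[\zeta_k][H]}(T_\psi,\mathrm{Res}_H M)=e_\psi^H M$ for the degree-$1$ lattice $T_\psi$. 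Summing the Brauer relation with coefficients $a_i$ then yields $\log_p\#N=\sum_i a_i\log_p\#(e_{\psi_i}^{H_i}M)$, which is condition (2).

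The main obstacle is the explicit part: for each of the three groups and each relevant $\chi$ one must actually produce a Brauer induction into degree-$1$ characters of elementary subgroups defined over $\ZZ[\zeta_k]$, and then check that every denominator entering the two steps above is controlled by an explicitly computable — hence decidable — finite set of primes (the divisors of $\#G$, together with those occurring in the chosen matrix units and in the idempotents $e_{\psi_i}^{H_i}$). This is also where the excluded case lives: the argument as set up does not cover the two degree-$8$ characters of the simple group of order $360$, and the hypothesis ``$\#G\neq360$ or $\deg(\chi)\neq8$'' records exactly this; I would isolate the precise obstruction there — most plausibly a failure to match the block idempotent with a degree-$1$ Brauer induction in the way the bookkeeping for (1) and (2) requires — and confirm that it is genuine by a direct computation with the character table of $A_6$.
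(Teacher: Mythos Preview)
Your approach is correct and takes a genuinely different route from the paper. The paper proceeds entirely by explicit case analysis: for each of the three groups and each admissible $\chi$ it exhibits concrete subgroups $H_i$, characters $\psi_i$, and group-algebra elements (found by solving $\#G$-variable linear systems in Magma) that realise the decomposition $e_\chi^G M\simeq N^{(\deg\chi)}$ and the order formula by hand; see Lemmas~2.1--2.4, 3.1--3.4, 4.1--4.3. Your argument replaces all of this by two structural facts: that $\ZZ_p[\zeta_k][G]$ is a product of matrix rings over the DVR $\ZZ_p[\zeta_k]$ when $p\nmid\#G$ (so Morita gives~(1) uniformly), and that $\log_p\#\Hom_G(-,M)$ is additive on lattices together with Frobenius reciprocity (which gives~(2) for \emph{any} Brauer induction with $\psi_i$ valued in $\ZZ[\zeta_k]$).

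What the paper's method buys is an explicit description of $N$ tied to the chosen induction, roughly $N\simeq e_{\psi_1}^{H_1}M/(1-e_\chi^G)e_{\psi_1}^{H_1}M$, which is what feeds into the arithmetic application (Theorem~1.2). The price is the ``decidable finite number of primes'': exclusions such as $p\neq29$ in Lemma~2.1 come solely from denominators in the computer-found witnesses $q_i$ and are artefacts of the explicit construction. Your argument needs no exclusions beyond $p\nmid\#G$.

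You should revisit your last paragraph: your method in fact has \emph{no} obstruction in the degree-$8$ case of $A_6$. The $\chi_4$-block is still $M_8(\ZZ_p[\zeta_{60}])$, so~(1) holds, and the paper's own induction $\chi_4^G=\Ind_H^G\psi_2+\Ind_H^G\psi_5-\Ind_{C_5}^G\sigma$ (Lemma~4.4(1), with all $\psi_i$ valued in $\ZZ[\zeta_{15}]$) plugs directly into your additivity argument to give~(2). The obstruction the paper meets is specific to its construction: the idempotents $e_{\psi_2}^H$ and $e_{\psi_5}^H$ are not $G$-conjugate (Lemma~4.4(3)), so the explicit method only produces $e_4^GM\simeq\bigl((e_{\psi_2}^HM/d_2M)\oplus(e_{\psi_5}^HM/d_5M)\bigr)^{(4)}$ and must impose the hypotheses of Corollary~4.6 to reach $N^{(8)}$. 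Your Morita argument bypasses this entirely, so rather than confirming a ``genuine'' obstruction you would in fact be removing the exceptional clause from the statement.
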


\begin{comment}
\begin{brems}
%$(\mathrm{i})$
%Any prime number that does not satisfy Theorem \ref{MainTh1} can be determined.
\begin{enumerate}
\renewcommand{\theenumi}{\roman{enumi}}
\item Theorem \ref{MainTh1} is not valid unless 
an appropriate Brauer induction and its associated subgroups are chosen.
This creates the need to write out the corresponding concrete subgroups in Sections 2, 3, and 4.

\item In the case of $\#G=360$ and $\deg(\chi)=8$, 
we prove Theorem \ref{MainTh1} by adding certain assumptions for 
a $\ZZ_p[\zeta_{15}][G]$-module (see Corollary \ref{KateiTuki}).

\item The key to proving Theorem \ref{MainTh1} is to use a computer to solve a $\# G$-way linear system of $\mathbb{Q}(\zeta_k)$-coefficients to specifically find elements of $\mathbb{Z}_p[\zeta_k][G]$ with the desired properties (see Remark in Section 2.1). 
We list the commands in Section 5. 
\end{enumerate}
%We give in Section \ref{Jyuubun} one of the sufficient conditions 
%under which Theorem \ref{MainTh1} holds if $\#G=360$ and $\deg(\chi)=8$.
\end{brems}
\end{comment}

Theorem \ref{MainTh1} is not valid unless 
an appropriate Brauer induction and its associated subgroups are chosen.
This creates the need to write out the corresponding concrete subgroups in Sections 2, 3, and 4.

In the case of $\#G=360$ and $\deg(\chi)=8$, 
we prove Theorem \ref{MainTh1} by adding certain assumptions for 
a $\ZZ_p[\zeta_{15}][G]$-module (see Corollary \ref{KateiTuki}).

The key to proving Theorem \ref{MainTh1} is to use a computer to solve a $\# G$-way linear system of $\mathbb{Q}(\zeta_k)$-coefficients to specifically find elements of $\mathbb{Z}_p[\zeta_k][G]$ with the desired properties (see Remark in Section 2.1). 
We list the commands in Section 5.

%---
We obtain the following Theorem \ref{OLD} from Theorem \ref{MainTh1}.
Let $F_+$ and $K_+$ be totally real number fields.
Suppose that $K_+$ is a finite Galois extension over $F_+$. 
Put $K=K_+(\sqrt{-r})$ and $F=F_+(\sqrt{-r})$
for a positive algebraic number $r\in F_+$.
Let $\chi$ be a character of $\Gal(K/F_+)$ and 
$L(s,\chi, K/F_+)$ the Artin $L$-function attached to $\chi$.
We denote the $p$-part of 
the ideal class group of $K$ and the roots of unity in $K$
by $A_K$ and $U_K$, respectively.
Put $\mathcal{G}=\Gal(K/F_+)$ and 
assume that $p$ does not divide $\# \mathcal{G}$.
%For the sake of simplicity, 
Suppose that $\mathrm{Im}(\chi)\subset\mathbb{Z}_p$.
Then we are able to define 
$A_K^\chi := e_\chi^\mathcal{G} A_K$ and $U_K^\chi := e_\chi^\mathcal{G} U_K$. 

\begin{thm}\label{OLD}
Assume that $\mathrm{Gal}(K/F)$ is isomorphic to 
the simple group of order $168$, $504$ or $360$.  
Let $\chi\in\mathrm{Irr}(\mathcal{G})$ be a character 
with $\#\mathcal{G}\neq 720$ or $\mathrm{deg}(\chi)\neq 8$. 

For the direct sum decomposition
$$A_K = \bigoplus_{\chi\in\mathrm{Irr(\mathcal{G})}} A_K^\chi,$$
the following two conditions hold:
\begin{enumerate}
\setlength{\itemsep}{5pt}
\item There exists an abelian $p$-group M such that
$\# A_K^{\chi} \simeq M^{(\mathrm{deg} \chi)}$.

\item 
If $\chi_-$ is an odd character and satisfies 
the technical assumption derived from \cite[Theorem 3]{Wiles}, then 
$$
\mathrm{ord}_p\left(\# A_K^{\chi_-}\right) 
-\mathrm{ord}_p\left(\# U_K^{\chi_-}\right)
~=~
\mathrm{deg}(\chi_-)\cdot\mathrm{ord}_p \left( L(0,\chi_-, K/F_+)\right).
$$
\end{enumerate}
\end{thm}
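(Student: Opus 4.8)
The plan is to push both statements down to the simple group $G=\Gal(K/F)$ and invoke Theorem~\ref{MainTh1}. The structural input is that $\Gal(K/K_+)$ --- the Galois group of $K$ over the intermediate field $K_+$, which is Galois over $F_+$ --- is a \emph{normal} subgroup of $\mathcal{G}=\Gal(K/F_+)$ of order $2$, hence central; as it meets $G$ trivially and has complementary index, $\mathcal{G}=G\times\langle J\rangle$, where $J$ is the complex conjugation of $K/K_+$ (which, $K_+/F_+$ being unramified at the archimedean primes, is the complex conjugation at \emph{every} infinite place, so ``$\chi$ odd'' means $\chi(J)=-\deg\chi$). Hence every $\chi\in\Irr(\mathcal{G})$ is $\chi(g,J^{a})=\theta(g)\epsilon(J)^{a}$ for a unique $\theta\in\Irr(G)$ and a character $\epsilon$ of $\langle J\rangle$; here $\deg\chi=\deg\theta$, $\chi$ is $\ZZ_p$-valued exactly when $\theta$ is, and $\chi$ is odd exactly when $\epsilon$ is the nontrivial character $\varepsilon$. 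Since $e_\chi^{\mathcal{G}}=e_\theta^{G}\cdot\frac{1+\epsilon(J)J}{2}$, writing $A_K^{\pm}=\frac{1\pm J}{2}A_K$ (a $\ZZ_p[G]$-module, as $J$ is central and $p$ is odd) and likewise for $U_K$ (note $U_K=U_K^{-}$), we get $A_K^{\chi}=e_\theta^{G}(A_K^{\pm})$ and $U_K^{\chi}=e_\theta^{G}(U_K^{\pm})$. Finally $\#\mathcal{G}=720\iff\#G=360$ and $\deg\chi=8\iff\deg\theta=8$, so the hypothesis of Theorem~\ref{OLD} excludes exactly the case set aside in Theorem~\ref{MainTh1}.

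For (1): Theorem~\ref{MainTh1}(1), applied to the $\ZZ_p[\zeta_k][G]$-module $A_K^{\pm}\otimes_{\ZZ_p}\ZZ_p[\zeta_k]$ and to $\theta$, gives $(A_K^{\chi})^{(\phi(k))}\simeq N^{(\deg\chi)}$ as $\ZZ_p$-modules (the $\theta$-component being $A_K^{\chi}\otimes\ZZ_p[\zeta_k]$, since $\theta$ is $\ZZ_p$-valued). Separately, as $p\nmid\#\mathcal{G}$ the group ring $\ZZ_p[\mathcal{G}]$ is a maximal $\ZZ_p$-order, so $e_\chi^{\mathcal{G}}\ZZ_p[\mathcal{G}]$ is a maximal order in the central simple $\Q_p$-algebra $e_\chi^{\mathcal{G}}\Q_p[\mathcal{G}]$ (of $\Q_p$-dimension $(\deg\chi)^{2}$, as $\chi$ is $\Q_p$-valued); by Morita equivalence and the classification of finite modules over the maximal order of a $p$-adic division algebra, every such module --- $A_K^{\chi}$ in particular --- is, as a $\ZZ_p$-module, isomorphic to $M^{(\deg\chi)}$ for an abelian $p$-group $M$. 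Comparison with the first sentence (by Krull--Schmidt, $M^{(\phi(k))}\simeq N$) pins down $\#M$; running the same argument on $U_K$ settles the unit side. This is (1).

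For (2): let $\theta=\sum_{i=1}^{m}a_i\,\Ind_{H_i}^{G}(\psi_i)$ be the Brauer induction of $\theta=\chi_-|_G$ furnished by Theorem~\ref{MainTh1}, with $\deg\psi_i=1$. Using $\Ind_{H_i\times\langle J\rangle}^{\mathcal{G}}(\psi_i\boxtimes\varepsilon)=(\Ind_{H_i}^{G}\psi_i)\boxtimes\varepsilon$,
$$\chi_-=\sum_{i=1}^{m}a_i\,\Ind_{\tilde H_i}^{\mathcal{G}}(\tilde\psi_i),\qquad \tilde H_i:=H_i\times\langle J\rangle=\Gal(K/k_i),$$
with $k_i\subseteq K_+$ totally real and $\tilde\psi_i\colon(h,J^{a})\mapsto\psi_i(h)\varepsilon(J)^{a}$ a degree-$1$ character that is \emph{odd}, $\tilde\psi_i(J)=-1$. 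The inductivity of Artin $L$-functions yields $\ord_p L(0,\chi_-,K/F_+)=\sum_i a_i\,\ord_p L(0,\tilde\psi_i,K/k_i)$. For each $i$, the technical hypothesis imposed on $\chi_-$ is precisely what makes \cite[Theorem~3]{Wiles} applicable to $\tilde\psi_i$, and the ensuing main conjecture (together with the analytic and Iwasawa class number formulas) gives the $p$-part of the class number formula
$$\ord_p L(0,\tilde\psi_i,K/k_i)=\ord_p\#\bigl(e_{\tilde\psi_i}^{\tilde H_i}A_K\bigr)-\ord_p\#\bigl(e_{\tilde\psi_i}^{\tilde H_i}U_K\bigr).$$
Finally apply Theorem~\ref{MainTh1}(2) to $A_K^{-}\otimes\ZZ_p[\zeta_k]$ and to $U_K^{-}\otimes\ZZ_p[\zeta_k]$: the $\theta$-components are $A_K^{\chi_-}\otimes\ZZ_p[\zeta_k]$ and $U_K^{\chi_-}\otimes\ZZ_p[\zeta_k]$, while, since $e_{\tilde\psi_i}^{\tilde H_i}=e_{\psi_i}^{H_i}\cdot\frac{1-J}{2}$, the $\psi_i$-parts recover the modules $e_{\tilde\psi_i}^{\tilde H_i}A_K$, $e_{\tilde\psi_i}^{\tilde H_i}U_K$ up to one common $\ZZ_p[\zeta_k]/\ZZ_p$ base-change exponent; those exponents cancel, and assembling the three displayed relations gives
$$\ord_p\bigl(\#A_K^{\chi_-}\bigr)-\ord_p\bigl(\#U_K^{\chi_-}\bigr)=\deg(\chi_-)\,\ord_p\bigl(L(0,\chi_-,K/F_+)\bigr).$$

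The difficulty is not the reduction to $G$ --- the splitting $\mathcal{G}=G\times\langle J\rangle$ makes it routine --- but the normalization bookkeeping in the last step: the $e_{\psi_i}$ involve the coefficients $\ZZ_p[\zeta_k]$ and the Brauer induction need not be $\Q_p$-rational, so one must check that the $\ZZ_p[\zeta_k]/\ZZ_p$ base-change exponents attached to the three sources --- Theorem~\ref{MainTh1}(2), the $L$-values, and the idempotents $e_{\tilde\psi_i}^{\tilde H_i}$ --- really do match and cancel; and one must isolate the ``technical assumption derived from \cite[Theorem~3]{Wiles}'' precisely enough that the main conjecture delivers the class number formula above for \emph{every} $\tilde\psi_i$ occurring, with no exceptional-zero correction. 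Both are verified case by case, which is why the explicit subgroups $H_i$ are written out in Sections~2--4.
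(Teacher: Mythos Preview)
Your proposal is correct and follows essentially the same approach as the paper, which simply states that Theorem~\ref{OLD} follows from Theorem~\ref{MainTh1} with part~(2) proved ``similar to the method in \cite[Section~7]{YK}''; your reduction via the splitting $\mathcal{G}=G\times\langle J\rangle$, inductivity of Artin $L$-functions, and the abelian main conjecture from \cite{Wiles} is exactly that method. Your invocation of maximal orders and Morita equivalence for part~(1) is a clean way to handle the descent from $\ZZ_p[\zeta_k]$ to $\ZZ_p$, though it tacitly uses that the Schur index of $\chi$ over $\Q_p$ is~$1$ (true for these groups when $p\nmid\#\mathcal{G}$, but worth stating).
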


The method of proving Theorem \ref{OLD} (2) from Theorem \ref{MainTh1} is similar to the method in \cite[Section 7]{YK}. 

%#!lualatex ZZpG.tex

We state the background of Theorem \ref{MainTh1} and \ref{OLD}. 
When $K/F$ is a finite Galois extension of number fields, 
we put 
$$\mathrm{Amb}_p(K/F)=
\{x\in A_k\mid\forall\sigma\in\mathrm{Gal}(K/F), x^\sigma=x\}$$
and
$$a_p(K/F)=\#\mathrm{Amb}_p(K/F).$$
We will not assume that $p \nmid \#\Gal(K/F)$ for a while.
In \cite[Theorem 5]{Ohta}, 
it is proved that
if $\# A_K > a_p(K/K^{A_n})$, then
$$p\textrm{-rank} \bigl( A_K/\mathrm{Amb}_p(K/F) \bigr) \ge 3,$$
where
$p$ is an odd prime, $n\ge 5$ and $\Gal(K/F)\simeq S_n$.
In \cite[Theorem 2]{YK0}, 
we essentially prove that if $\# A_K > a_p(K/F)$, 
then $$p\textrm{-rank} \bigl( A_K/\mathrm{Amb}_p(K/F) \bigr) \ge l_n +1$$
in the case of $\Gal(K/F)\simeq A_n$, 
improving on the above result,
where $n\ge 5$ and $l_n$ be the maximal prime number satisfying $l_n\neq p$ and $l_n \le\sqrt{n}$.
Furthermore, in \cite[Theorem 1.2]{YK1}, 
we improve on \cite[Theorem 2]{YK0} and obtain a lower bound on 
$p\textrm{-rank} \bigl( A_K/\mathrm{Amb}_p(K/F) \bigr)$
when $\Gal(K/F)$ is simple and non-abelian.
In \cite[Corollary 1.2, Theorem 1.4]{YK},
if $\Gal(K/F)\simeq A_5$ and $p\nmid \#\Gal(K/F)$,
we show that it is possible to know the more precise structure of $A_K$,
and furthermore, by using \cite[Theorem 3]{Wiles}, 
we show that the order of $A_K$ and
the special value of a certain Artin $L$-function at zero are connected
when $K$ is a CM field which comes from $A_5$-extension.
The aim of this paper is to show that 
even when $\Gal(K/F)$ is isomorphic to the simple group of order 168, 504 or 360, 
the structure of $A_K^{\chi}$ can be precisely known, 
and $A_K^{\chi_-}$ and $L(0,\chi_-,K/F_+)$ are connected 
in the same context as in \cite{YK}.
%#!platex ZZpG.tex && dvipdfmx ZZpG.dvi && rm ZZpG.dvi

\section{For the simple group of order 168}
Let $G$ be the simple group of order 168.
It is known that 
$$G \simeq \langle(3,6,7)(4,5,8), (1,8,2)(4,5,6)\rangle$$
and the character table of $G$ is given by Table \ref{Ch168}.
\begin{table}[h]
\begin{center}
\caption{the character table of $G$}\label{Ch168}
\begin{tabular}{l|cccccc}
\multicolumn{1}{c|}{} & $1^G$ & $c_2$ & $c_3$ & $c_4$ & $c_5$ & $c_6$ \\ \hline 
$\chi_1^G$  & $1$ & $1$  & $1$  & $1$  & $1$ & $1$   \\
$\chi_2^G$  & $3$ & $-1$ & $1$  & $0$  & $\omega_7$ & $\overline{\omega_7}$  \\
$\chi_3^G$  & $3$ & $-1$ & $1$  & $0$  & $\overline{\omega_7}$ & $\omega_7$  \\
$\chi_4^G$  & $6$ & $2$  & $0$  & $0$  & $-1$  & $-1$ \\
$\chi_5^G$  & $7$ & $-1$ & $-1$ & $1$  & $0$ & $0$ \\
$\chi_6^G$  & $8$ & $0$  & $0$  & $-1$ & $1$ & $1$
\end{tabular}
\end{center}
\end{table}

\noindent
Here, 
$c_2=(1,8)(2,4)(3,5)(6,7)$, $c_3=(1,4,8,2)(3,6,5,7)$, $c_4=(1,8,2)(4,5,6)$, 
$c_5=(1, 5, 3, 7, 6, 8, 2)$, $c_6=(1, 7, 2, 3, 8, 5, 6)$ 
and $\omega_7 =(-1+\sqrt{-7})/2$.
For simplicity, we will denote $e_{\chi_i^G}^G$ as $e_i^G$. 

%#!lualatex ZZpG.tex

\subsection{The case of the characters of degree 3}
We only explain the character $\chi_2^G$.
Put $a=(1, 8, 2, 4, 3, 7, 5)$ and $C_7=\langle a\rangle$.
Let $\chi_i^{C_7}$ be the characters of $C_7$ 
that satisfy $\chi_i^{C_7}(a)=\zeta_7^i$ for $i\in\{1,2,4\}$. 
Put $r=(1, 7, 3, 8)(2, 6, 4, 5)$, $s=(1, 2)(3, 4)(5, 7)(6, 8)$ and
$D_4 = \langle r, s\rangle$. 
Then $D_4$ is isomorphic to the dihedral group of order 8. 
Let $\eta$ be the character of $D_4$ 
that satisfies $\eta(r)=\eta(s)= -1$.
We can get the following lemma by using a computer 
(see Section \ref{168-3_Com}).

\begin{lem}\label{168-3_Lem}
Assume that $p\not\in\{2,3,7\}$.
\begin{enumerate}
\setlength{\itemsep}{5pt}
\item $\chi_2^G = \Ind_G^{C_7}(\chi_i^{C_7})-\Ind_G^{D_4}(\eta)$
for all $i \in\{1,2,4\}$. 

\item $e_2^G(e_1^{C_7}+e_2^{C_7}+e_4^{C_7}) = e_2^G.$

\item We have
$$e_4^{C_7} = h^{-1}e_2^{C_7}h
\quad\text{and}\quad
e_1^{C_7} = h e_2^{C_7} h^{-1}$$
by putting $h=(1,7,8)(3,5,4)$.

\item There exist $q_1\in\mathbb{Z}[1/2, \zeta_7][G]$ and 
$q_2 \in\mathbb{Z}[1/7, 1/29, \zeta_7][G]$ 
such that
$$(1-e_2^G)e_1^{C_7}q_1 = e_1^{C_7} e_\eta^{D_4}$$
and
$$(1-e_2^G)e_1^{C_7} = e_1^{C_7} e_\eta^{D_4}q_2,$$
that is, 
$(1-e_2^G)e_1^{C_7} \mathbb{Z}_p[\zeta_7][G] = e_1^{C_7} e_\eta^{D_4} \mathbb{Z}_p[\zeta_7][G]$
if $p\neq 29$.

\item There exist $q_3\in\mathbb{Z}[1/2, 1/29, \zeta_7][G]$ and 
$q_4\in\mathbb{Z}[\zeta_7][G]$
such that
$$q_3 e_1^{C_7} e_\eta^{D_4} = e_\eta^{D_4}$$
and
$$e_1^{C_7} e_\eta^{D_4} = q_4 e_\eta^{D_4},$$ 
that is, 
$\mathbb{Z}_p[\zeta_7][G] e_1^{C_7} e_\eta^{D_4} = \mathbb{Z}_p[\zeta_7][G] e_\eta^{D_4}$
if $p\neq 29$.
\end{enumerate}
\end{lem}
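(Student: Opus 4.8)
The plan is to verify each of the five assertions by an explicit computation in the group ring $\mathbb{Q}(\zeta_7)[G]$, localized away from the stated bad primes, exploiting the fact that $p \nmid \#G$ so that $\mathbb{Z}_p[\zeta_7][G]$ is a maximal order and its idempotents behave well. For part (1), I would compute the induced characters $\Ind_G^{C_7}(\chi_i^{C_7})$ and $\Ind_G^{D_4}(\eta)$ via the Frobenius formula (or directly from the fusion of conjugacy classes of $C_7$ and $D_4$ into $G$), express both as $\mathbb{Z}$-linear combinations of $\chi_1^G,\dots,\chi_6^G$ using Table \ref{Ch168}, and check that the difference equals the row $\chi_2^G = (3,-1,1,0,\omega_7,\overline{\omega_7})$; since $\chi_i^{C_7}$ for $i \in \{1,2,4\}$ are Galois-conjugate over $\mathbb{Q}$, the same identity holds for all three values of $i$ simultaneously. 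Part (2) follows because $e_1^{C_7}+e_2^{C_7}+e_4^{C_7}$ is the idempotent of $\mathbb{Z}_p[\zeta_7][C_7]$ cutting out the sum of the three nontrivial characters on which $\chi_2^G|_{C_7}$ is supported (its restriction contains each of $\chi_1^{C_7},\chi_2^{C_7},\chi_4^{C_7}$ by Frobenius reciprocity with (1), and no others), so left-multiplying $e_2^G$ by it is the identity on $e_2^G M$; concretely one checks the product of the two explicit idempotents in $\mathbb{Q}(\zeta_7)[G]$. Part (3) is the observation that the element $h$ normalizes $C_7$ and conjugation by $h$ on $C_7=\langle a\rangle$ sends $a$ to $a^2$ (or $a^4$), which permutes the characters $\chi_1^{C_7},\chi_2^{C_7},\chi_4^{C_7}$ in the claimed way, hence conjugates the corresponding primitive idempotents; this is a short check that $h a h^{-1}$ equals the stated power of $a$ as a permutation.

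For parts (4) and (5), the strategy is the genuinely computational one flagged in the introduction: one wants explicit elements $q_1,\dots,q_4$ of the group ring (with denominators only at a decidable finite set of primes) witnessing equalities of cyclic $\mathbb{Z}_p[\zeta_7][G]$-modules. To find $q_1$ with $(1-e_2^G)e_1^{C_7}q_1 = e_1^{C_7}e_\eta^{D_4}$ one sets up, over $\mathbb{Q}(\zeta_7)$, the linear system in the $168$ unknown coordinates of $q_1$ expressing that left-multiplication by the fixed element $(1-e_2^G)e_1^{C_7}$ sends $q_1$ to the fixed target $e_1^{C_7}e_\eta^{D_4}$; this is a $168$-dimensional linear system whose solvability over $\mathbb{Q}(\zeta_7)$ is equivalent to the target lying in the right $\mathbb{Q}(\zeta_7)[G]$-submodule generated by $(1-e_2^G)e_1^{C_7}$, and one reads off the bad primes from the denominators appearing in a chosen solution (and from the determinant of whatever square subsystem is inverted). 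The reverse inclusions, giving $q_2,q_3,q_4$, are symmetric linear systems. Once all four elements are exhibited with the stated denominators, the final "that is" equalities of cyclic modules over $\mathbb{Z}_p[\zeta_7][G]$ are immediate for $p$ outside those denominators: each inclusion of one cyclic submodule in the other is witnessed by one of the $q_j$, and mutual inclusion gives equality.

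The main obstacle is twofold. Conceptually, it is verifying that the linear systems in (4) and (5) are actually solvable over $\mathbb{Q}(\zeta_7)$ — equivalently, that the relevant cyclic modules coincide after inverting enough primes; this amounts to knowing that $e_1^{C_7}e_\eta^{D_4}$ generates, on the left and on the right appropriately, the same $\mathbb{Q}(\zeta_7)[G]$-module as $(1-e_2^G)e_1^{C_7}$ (resp.\ as $e_\eta^{D_4}$), which one expects from the Brauer induction identity in (1) together with Wedderburn theory for the semisimple algebra $\mathbb{Q}(\zeta_7)[G]$, but which must be confirmed. Computationally, the obstacle is bookkeeping: the systems have $168$ unknowns with $\mathbb{Q}(\zeta_7)$-coefficients, so the verification is carried out by machine, and the honest work is exhibiting the explicit $q_j$ and certifying exactly which primes occur in their denominators (here $2,7,29$); I defer these explicit elements and the machine commands to Section \ref{168-3_Com}.
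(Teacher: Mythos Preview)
Your proposal is correct and follows essentially the same approach as the paper: the paper's proof consists of the statement ``We can get the following lemma by using a computer (see Section~\ref{168-3_Com})'', together with a Remark describing exactly the procedure you outline for (4)--(5), namely setting up the $168$-variable linear system $e_1^{C_7}e_\eta^{D_4}\cdot(\sum_{g\in G} T_g g)=(1-e_2^G)e_1^{C_7}$ over $\mathbb{Q}(\zeta_7)$, solving it by machine, and inspecting the denominators of the solution. Your additional conceptual remarks for (2) and (3) (Frobenius reciprocity for the restriction $\chi_2^G|_{C_7}$, and that $h$ normalizes $C_7$ and permutes the three characters) are sound and give more explanation than the paper's bare computer check, but the method is the same.
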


\begin{brem}
We can  obtain above $q_2$ by the following procedure: 
\begin{enumerate}
\renewcommand{\labelenumi}{(\roman{enumi})}
\setlength{\itemsep}{5pt}
\item[\text{Step} 1:] Consider the following relational equation 
$$e_1^{C_7} e_\eta^{D_4}\cdot\bigl(\sum_{g\in G} T_g g\bigr) = (1-e_2^G)e_1^{C_7}$$
with $T_g ~(g\in G)$ as a variable.

\item[\text{Step} 2:] We obtain the linear system of 
$\mathbb{Q}(\zeta_7)$-coefficients and $T_g$-variables
by comparing the $g$-coefficient of the formula of Step 1.  

\item[\text{Step} 3:] Find a solution $\{t_g\}_{g\in G}$ to 
the linear system of Step 2 by using a computer.

\item[\text{Step} 4:] We have $e_1^{C_7} e_\eta^{D_4}\cdot q_2 = (1-e_2^G)e_1^{C_7}$ 
by putting $q_2=\sum_{g\in G} t_g g$ and 
$q_2 \in\mathbb{Z}[1/7, 1/29, \zeta_7][G]$ 
by checking the concrete value of $t_g$.
%If the solution of $\mathrm{(iii)}$ can be well chosen, 
%$q_2 \in\mathbb{Z}[1/7, \zeta_7][G]$ may be valid.
\end{enumerate}
\end{brem}

Now, we will prove Theorem \ref{MainTh1} in the case of $\chi = \chi_2^G$.
Let $p$ be a prime such that $p$ does not divide $\# G=168$,
$M$ a $\mathbb{Z}_p[\zeta_7][G]$-module.
By Lemma \ref{168-3_Lem} (2), the homomorphism
$$
f_{e_2^G}: e_1^{C_7}M\oplus e_2^{C_7}M\oplus e_4^{C_7}M \longrightarrow e_2^G M,~
x \longmapsto e_2^Gx
$$
is surjective.
Since $e_2^G$ is a central idempotent of $\mathbb{Z}_p[\zeta_7][G]$, 
we obtain
$$\Ker(f_{e_2^G})=\bigoplus_{i\in\{1,2,4\}} 
\left(1^G -e_2^G\right)e_i^{C_7} M,$$
that is,
$$e_2^G M \simeq \bigoplus_{i\in\{1,2,4\}} 
e_i^{C_7} M /\left(1^G -e_2^G\right)e_i^{C_7} M$$
holds.
Put $N=e_1^{C_7} M /\left(1^G -e_2^G\right)e_1^{C_7} M$.
We see that
$$N\simeq e_i^{C_7} M /\left(1^G -e_2^G\right)e_i^{C_7} M$$
for $i\in\{2,4\}$ by Lemma \ref{168-3_Lem} (3).
Thus, $e_2^G M \simeq N^{(3)}$ is proved.

If $p\neq 29$,
then we have $(1-e_2^G)e_1^{C_7} M = e_1^{C_7} e_\eta^{D_4} M$
by Lemma \ref{168-3_Lem} (4).
The inverse map of 
$$
f_{q_3}: e_1^{C_7} e_\eta^{D_4} M\longrightarrow e_\eta^{D_4} M,~
x \longmapsto q_3 x
$$
is
$$
f_{q_4}: e_\eta^{D_4} M\longrightarrow e_1^{C_7} e_\eta^{D_4} M,~
x \longmapsto q_4 x
$$
by Lemma \ref{168-3_Lem} (5).
Consequently, $\# N = \#(e_1^{C_7}M) /\#(e_\eta^{D_4}M)$ holds
if $\# M < \infty$.

%#!lualatex ZZpG.tex

\subsection{The case of the characters of degree 6}
Put
$$A_{4,(7)} = \langle(1, 7, 6)(2, 5, 4),~(1, 8)(2, 4)(3, 5)(6, 7)\rangle$$
and
$$H = \langle(1, 4, 2)(5, 8, 7),~(1, 8, 2, 4, 3, 7, 5)\rangle.$$
Then $A_{4,(7)}$ is isomorphic to the alternating group of degree $4$. 
Also, $H$ is isomorphic to the semidirect product of a cyclic group of order $7$ and one of order $3$. 
Since $G$ has exactly seven conjugate subgroups of $A_{4,(7)}$, 
let $A_{4,(1)}, \ldots, A_{4,(6)}$ be all of the other conjugate subgroups of 
$A_{4,(7)}$ in $G$.  
Especially, put
$$A_{4,(1)} = \langle(2, 6, 4)(3, 5, 8),~(1, 2)(3, 5)(4, 6)(7, 8)\rangle.$$
Let $\varepsilon_i$ and $\varepsilon'$ 
be the trivial characters of $A_{4,(i)}$ and $H$, respectively.
We can get the following lemma by using a computer 
(see Section \ref{168-6_Com}). 

\begin{lem}\label{168-6_Lem}
Assume that $p\not\in\{2,3,7\}.$
\begin{enumerate}
\setlength{\itemsep}{5pt}
\item $\chi_4^G = \Ind_G^{A_{4,(1)}}(\varepsilon_1)-\Ind_G^H(\varepsilon')$.

\item There exists $r_0\in\mathbb{Z}[1/2, 1/3, 1/7][G]$ such that
$$e_4^G =\bigl( \sum_{i=1}^6 e_{\varepsilon_i}^{A_{4,(i)}} \bigr) r_0.$$

\item For all $j\in \{1,2,\cdots,6\}$, 
there exist $r_j \in e_{\varepsilon_j}^{A_{4,(j)}}\mathbb{Z}[G]$ 
and $r'_j \in \mathbb{Z}[1/2, 1/7][G]$
such that
$$\forall i\in\{1,2,\ldots,6\}\setminus\{j\},~ 
r_{j}e_{\varepsilon_{i}}^{A_{4,(i)}}=0
$$
and
$$ e_4^G e_{\varepsilon_j}^{A_{4,(j)}} = 
r'_j r_j e_4^G e_{\varepsilon_j}^{A_{4,(j)}}.
$$

\item For all $i\in \{1,2,\ldots,6\}$, there exists $g_i \in G$ such that
$$e_{\varepsilon_1}^{A_{4,(1)}} = g_i\,e_{\varepsilon_i}^{A_{4,(i)}} g_i^{-1}.$$

\item There exist $q_1\in\mathbb{Z}[1/7][G]$ and $q_2\in\mathbb{Z}[1/2][G]$
such that
$$\left(1-e_4^G\right)e_{\varepsilon_1}^{A_{4,(1)}} q_1 = 
e_{\varepsilon_1}^{A_{4,(1)}}e_{\varepsilon'}^{H}
$$
and
$$\left(1-e_4^G\right)e_{\varepsilon_1}^{A_{4,(1)}} = 
e_{\varepsilon_1}^{A_{4,(1)}}e_{\varepsilon'}^{H} q_2,
$$
that is, 
$\left(1-e_4^G\right)e_{\varepsilon_1}^{A_{4,(1)}}\,\mathbb{Z}_p[G] 
=
e_{\varepsilon_1}^{A_{4,(1)}}e_{\varepsilon'}^{H}\,\mathbb{Z}_p[G].$

\item 
There exist $q_3\in\mathbb{Z}[G]$ and $q_4\in\mathbb{Z}[1/2][G]$
such that
$$q_3 e_{\varepsilon_1}^{A_{4,(1)}}e_{\varepsilon'}^{H}= e_{\varepsilon'}^{H}$$
and
$$e_{\varepsilon_1}^{A_{4,(1)}}e_{\varepsilon'}^{H}= q_4 e_{\varepsilon'}^{H},$$
that is,
$\mathbb{Z}_p[G]\,
e_{\varepsilon_1}^{A_{4,(1)}}e_{\varepsilon'}^{H}
=
\mathbb{Z}_p[G]\,e_{\varepsilon'}^{H}$. 
\end{enumerate}
\end{lem}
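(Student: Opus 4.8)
The plan is to establish Lemma~\ref{168-6_Lem} by the same computer-assisted strategy already used for Lemma~\ref{168-3_Lem}, exploiting the Brauer induction $\chi_4^G = \Ind_G^{A_{4,(1)}}(\varepsilon_1)-\Ind_G^H(\varepsilon')$ to relate the central idempotent $e_4^G$ to the idempotents $e_{\varepsilon_i}^{A_{4,(i)}}$ and $e_{\varepsilon'}^{H}$. For part (1), I would verify the identity of virtual characters by a direct computation with the character table: using Frobenius reciprocity, $\langle \Ind_G^{A_{4,(1)}}(\varepsilon_1), \chi_j^G\rangle = \langle \varepsilon_1, \mathrm{Res}_{A_{4,(1)}}^G \chi_j^G\rangle$ and similarly for $H$, and one checks that the difference of these inner products equals $\langle \chi_4^G, \chi_j^G\rangle = \delta_{j,4}$ for every $j$. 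This is a finite check once the conjugacy-class structure of $A_{4,(1)}$ and $H$ inside $G$ is known, and it is exactly the kind of thing the computer session in Section~\ref{168-6_Com} handles.

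For parts (2)--(6), the scheme is to set up, for each desired module-theoretic equation, an unknown element $\sum_{g\in G} T_g g \in \mathbb{Q}(\zeta_k)[G]$ (here $k$ can be taken to involve only $2,3,7$ since all characters in sight are rational-valued on the relevant subgroups, so in fact $\mathbb{Q}[G]$ suffices), expand the prescribed relation in the group-ring basis, equate $g$-coefficients to obtain a linear system over $\mathbb{Q}$ in the $\#G = 168$ unknowns $T_g$, solve it by computer, and then inspect the denominators of the resulting rational solution to pin down the arithmetic of primes that must be inverted. Concretely: (2) expresses $e_4^G$ as a right multiple of $\sum_{i=1}^6 e_{\varepsilon_i}^{A_{4,(i)}}$, which is possible because the $A_{4,(i)}$ are all the $G$-conjugates of $A_{4,(1)}$, so $\sum_i e_{\varepsilon_i}^{A_{4,(i)}}$ is $G$-stable and its image contains the $\chi_4^G$-isotypic part; (3) produces, for each $j$, a ``separating'' element $r_j$ killing the other five conjugate idempotents, which exists because the six primitive central-like pieces are orthogonal after inverting $2,3,7$; (4) is the observation that the $A_{4,(i)}$ are $G$-conjugate, giving $g_i$ with $e_{\varepsilon_1}^{A_{4,(1)}} = g_i e_{\varepsilon_i}^{A_{4,(i)}} g_i^{-1}$ directly; and (5)--(6) are the two-sided divisibility statements $(1-e_4^G)e_{\varepsilon_1}^{A_{4,(1)}}\mathbb{Z}_p[G] = e_{\varepsilon_1}^{A_{4,(1)}}e_{\varepsilon'}^{H}\mathbb{Z}_p[G]$ and $\mathbb{Z}_p[G]e_{\varepsilon_1}^{A_{4,(1)}}e_{\varepsilon'}^{H} = \mathbb{Z}_p[G]e_{\varepsilon'}^{H}$, each of which amounts to exhibiting explicit $q_j$ realizing both inclusions. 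As in the degree-$3$ case, once a $q_1$ with $(1-e_4^G)e_{\varepsilon_1}^{A_{4,(1)}}q_1 = e_{\varepsilon_1}^{A_{4,(1)}}e_{\varepsilon'}^{H}$ and a $q_2$ with the reverse relation are both found with coefficients in $\mathbb{Z}[1/7]$ and $\mathbb{Z}[1/2]$ respectively, the stated $\mathbb{Z}_p$-equality of ideals follows for all $p \notin \{2,3,7\}$.

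The genuine work — and the main obstacle — is \emph{solvability} of these linear systems over $\mathbb{Q}$, together with control of the denominators: it is not a priori clear that the prescribed right-hand sides lie in the appropriate one-sided ideals, and that each equation admits a solution whose denominators involve only $2$, $3$, and $7$ (and not some sporadic prime like the $29$ that appeared for $\chi_2^G$). I would resolve existence conceptually by noting that over $\mathbb{Q}[G]$ (or $\mathbb{Q}_p[G]$ for $p \nmid \#G$) everything is semisimple, the idempotents generate the expected isotypic components, and the Wedderburn decomposition makes the required inclusions automatic; the denominator bookkeeping is then a matter of reading off the computed solutions $\{t_g\}$ in Section~\ref{168-6_Com} and checking that no unexpected prime occurs — which, unlike the degree-$3$ case, it apparently does not, since $29$ arose there from the index-$29$-type arithmetic of $D_4$ versus $C_7$ that has no analogue for $A_4$ versus $H$. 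A secondary point to handle carefully is part (3): the element $r_j$ must simultaneously annihilate five idempotents and act invertibly (after multiplication by $r'_j$) on $e_4^G e_{\varepsilon_j}^{A_{4,(j)}}$, so one wants to verify that the linear system encoding $r_j e_{\varepsilon_i}^{A_{4,(i)}} = 0$ for $i\neq j$ still leaves enough freedom for the second relation $e_4^G e_{\varepsilon_j}^{A_{4,(j)}} = r'_j r_j e_4^G e_{\varepsilon_j}^{A_{4,(j)}}$ to be solvable — again a finite computer check, with the conceptual guarantee coming from orthogonality of the six conjugate pieces in the semisimple setting.
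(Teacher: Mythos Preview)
Your approach is essentially the paper's own: the lemma is proved entirely by the computer session in Section~\ref{168-6_Com}, which does precisely what you describe --- verify the character identity (1) directly, and for (2)--(6) set up linear systems in the $168$ unknowns $T_g$, solve over $\mathbb{Q}$, and read off the denominators of the explicit solutions to confirm only $2,3,7$ appear.

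One point in your conceptual commentary is off, however. You write that ``the $A_{4,(i)}$ are all the $G$-conjugates of $A_{4,(1)}$, so $\sum_i e_{\varepsilon_i}^{A_{4,(i)}}$ is $G$-stable.'' In fact $G$ has \emph{seven} conjugates of $A_{4,(1)}$ (the index is $168/24=7$), and the paper deliberately sums only over six of them, omitting $A_{4,(7)}$. Consequently $\sum_{i=1}^6 e_{\varepsilon_i}^{A_{4,(i)}}$ is \emph{not} central, and your a~priori argument for solvability of (2) via $G$-stability does not apply. The actual reason (2) holds is just that the computer finds an explicit $r_0$; the paper offers no conceptual explanation beyond that, and neither the semisimplicity heuristic nor orthogonality of the ``six pieces'' is quite the right picture here. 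This does not affect the validity of the computational proof, but your supporting narrative for part~(2) would need adjustment.
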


Now, we will prove Theorem \ref{MainTh1} in the case of $\chi = \chi_4^G$.
% Suppose that all of the elements in $\mathbb{Q}[G]$ related to 
% Lemma \ref{168-6_Lem} are not in $\ZZ_p[G]$.
Let $M$ be a $\mathbb{Z}_p[G]$-module.
The following homomorphism
$$f_{e_4^G}:
\prod_{i=1}^6 e_{\varepsilon_i}^{A_{4,(i)}} M
\longrightarrow e_4^G M,~~
(x_i)_i \longmapsto e_4^G\,\bigl(\sum_{i=1}^6 x_i\bigr)
$$
is surjective by Lemma \ref{168-6_Lem} (2).
We will check 
$$\Ker(f_{e_4^G})=
\prod_{i=1}^6 (1-e_4^G) e_{\varepsilon_i}^{A_{4,(i)}} M.
$$
% It is easy to see for this superset relationship.
Take 
$\bigl(e_{\varepsilon_i}^{A_{4,(i)}} \xi_i \bigr)_{1\le i\le 6}$ in
$\Ker(f_{e_4^G})$,
then 
$$e_4^G\,\bigl(\sum_{i=1}^6 e_{\varepsilon_i}^{A_{4,(i)}} \xi_i\bigr)=0$$
holds.
Note that $e_4^G$ is a central idempotent of $\mathbb{Z}_p[G]$.
We get
\begin{eqnarray*}
0 =
r_1 e_4^G \bigl(\sum_{i=1}^6 e_{\varepsilon_i}^{A_{4,(i)}} \xi_i \bigr)=
e_4^G r_1 \bigl(\sum_{i=1}^6 e_{\varepsilon_i}^{A_{4,(i)}} \xi_i \bigr)=
r_1 e_4^G e_{\varepsilon_1}^{A_{4,(1)}}\xi_1
\end{eqnarray*}
and
$$0=r'_1 r_1 e_4^G e_{\varepsilon_1}^{A_{4,(1)}}\xi_1
= e_4^G e_{\varepsilon_1}^{A_{4,(1)}}\xi_1$$
by Lemma \ref{168-6_Lem} (3).
Thus, we see
$$e_{\varepsilon_1}^{A_{4,(1)}}\xi_1=
\left(1-e_4^G\right)e_{\varepsilon_1}^{A_{4,(1)}}\xi_1
+e_4^G e_{\varepsilon_1}^{A_{4,(1)}}\xi_1 =
\left(1-e_4^G\right)e_{\varepsilon_1}^{A_{4,(1)}}\xi_1.
$$
Similarly, we have
$$e_{\varepsilon_i}^{A_{4,(i)}}\xi_i=
\left(1-e_4^G\right)e_{\varepsilon_i}^{A_{4,(i)}}\xi_i
$$
for $i\in\{1,2,\cdots,6\}.$
Thus, $$\bigl(e_{\varepsilon_i}^{A_{4,(i)}} \xi_i \bigr)_{1\le i\le 6} \in
\prod_{i=1}^6 (1-e_4^G) e_{\varepsilon_i}^{A_{4,(i)}} M$$ 
holds.
It is easy to see the reverse inclusion.

It follows that
$$e_4^G M \simeq
\prod_{i=1}^6 \left(
e_{\varepsilon_i}^{A_{4,(i)}} M/(1-e_4^G) e_{\varepsilon_i}^{A_{4,(i)}} M
\right)$$ by using the fundamental homomorphism theorem.
Put
$$N=
e_{\varepsilon_1}^{A_{4,(1)}} M/(1-e_4^G) e_{\varepsilon_1}^{A_{4,(1)}} M.
$$
From Lemma \ref{168-6_Lem} (4), we obtain
$$N \simeq
e_{\varepsilon_i}^{A_{4,(i)}} M/(1-e_4^G) e_{\varepsilon_i}^{A_{4,(i)}} M
$$
for $i\in\{1,2,\cdots,6\}$.
This completes the proof of the Theorem \ref{MainTh1} $(1)$ 
in the case of $\chi=\chi_4^G$ by Lemma \ref{168-6_Lem} (4).

We have 
$(1-e_4^G) e_{\varepsilon_1}^{A_{4,(1)}} M =
e_{\varepsilon_1}^{A_{4,(1)}} e_{\varepsilon'}^{H} M$
by Lemma \ref{168-6_Lem} (5).
The inverse map of
$$f_{q_3}: e_{\varepsilon_1}^{A_{4,(1)}} e_{\varepsilon'}^{H} M
\longrightarrow
e_{\varepsilon'}^{H} M, ~ x \longmapsto q_3 x 
$$
is
$$f_{q_4}: e_{\varepsilon'}^{H} M\longrightarrow
e_{\varepsilon_1}^{A_{4,(1)}} e_{\varepsilon'}^{H} M,~
y \longmapsto q_4 y$$
by Lemma \ref{168-6_Lem} (6).
If $\# M<\infty$, then we obtain
$$\# N
=\frac{\#{e_{\varepsilon_1}^{A_{4,(1)}} M}}{\# e_{\varepsilon'}^{H} M}.
$$

%#!platex ZZpG.tex && dvipdfmx ZZpG.dvi && rm ZZpG.dvi

\subsection{The case of the characters of degree 7}
Put
$$S_{4,(1)}=\langle(2, 6, 4)(3, 5, 8),~(1, 5)(2, 8)(3, 6)(4, 7)\rangle.$$
Then $S_{4,(1)}$ is is isomorphic to the symmetric group of degree 4.
%Since $G$ has the seven subgroups that are conjugate to $S_{4,(1)}$,
%let $S_{4,(2)},\ldots,S_{4,(7)}$ 
%denote the subgroups of $G$ such that $S_{4,(i)}$ is conjugate to $S_{4,(1)}$ 
%for $i\in\{2,\ldots,7\}$.
Since $G$ has exactly seven conjugate subgroups of $S_{4,(1)}$, let $S_{4,(2)}$, $\ldots$, $S_{4,(7)}$ be all of the other conjugate subgroups of $S_{4,(1)}$ in $G$. 
For $i\in\{1,\ldots,7\}$,
let $\eta_i$ be the non-trivial character of $S_{4,(i)}$ 
with $\deg(\eta_i)=1$.
We can get the following lemma by using a computer
(see Section \ref{168-7_Com}). 

\begin{lem}\label{168-7_Lem}
Assume that $p\not\in\{2,3,7\}.$
\begin{enumerate}
\setlength{\itemsep}{5pt}
\item $\chi_5^G = \Ind_G^{S_{4,(1)}}\left(\eta_1 \right)$.

\item $\displaystyle e_5^G =\sum_{i=1}^7 e_{\eta_i}^{S_{4,(i)}}$.

\item $e_{\eta_i}^{S_{4,(i)}} e_{\eta_j}^{S_{4,(j)}} 
= \delta_{ij} e_{\eta_i}^{S_{4,(i)}}$, 
where $\delta_{ij}$ is the Kronecker delta.

\item For all $i\in \{1,2,\ldots,7\}$, there exists $g_i \in G$ such that
$$e_{\eta_1}^{S_{4,(1)}} = g_i\,e_{\eta_i}^{S_{4,(i)}} g_i^{-1}.$$
\end{enumerate}
\end{lem}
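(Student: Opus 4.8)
The plan is to regard this as the model case in which $\chi_5^G$ is monomial, induced from a linear character of a maximal subgroup whose index equals $\deg(\chi_5^G)=7$; granting part (1), parts (2)--(4) follow formally from block theory and a dimension count. For (1): since $S_{4,(1)}\simeq S_4$ has no element of order $7$, the character $\psi:=\Ind_G^{S_{4,(1)}}(\eta_1)$ vanishes on $c_5$ and $c_6$, and $\psi(1^G)=[G:S_{4,(1)}]=7$. On $c_2,c_3,c_4$ I would use the induction formula $\Ind_H^G(\theta)(g)=\sum_i\bigl(\#C_G(g)/\#C_H(h_i)\bigr)\theta(h_i)$, the sum running over representatives $h_i$ of the $H$-classes fusing into the $G$-class of $g$; here the fusion is forced by element orders, because $G$ has a single class of elements of each order $2$, $3$, $4$ (both $S_4$-classes of involutions --- transpositions and double transpositions --- fuse into $c_2$, the $3$-cycles into $c_4$, the $4$-cycles into $c_3$). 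Feeding in $\eta_1=\mathrm{sgn}$ together with the centralizer orders in $S_4$ and in $G$ yields $\psi=(7,-1,-1,1,0,0)=\chi_5^G$ by Table~\ref{Ch168}; equivalently $\langle\psi,\psi\rangle_G=1$, so $\psi$ is irreducible of degree $7$ and hence equals $\chi_5^G$. This is exactly the verification performed by machine in Section~\ref{168-7_Com}.

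For (2): put $\epsilon=\sum_{i=1}^{7}e_{\eta_i}^{S_{4,(i)}}$. Since the $S_{4,(i)}$ form a single $G$-conjugacy class of subgroups and $\eta_i$ is the unique non-trivial degree-$1$ character of $S_{4,(i)}$, conjugation by any $g\in G$ permutes the seven idempotents, so $\epsilon$ is central in $\mathbb{Q}[G]$. For each $i$, the left $G$-module $\mathbb{Q}[G]\,e_{\eta_i}^{S_{4,(i)}}$ is isomorphic to $\Ind_G^{S_{4,(i)}}(\eta_i)$, which by (1) (for $i=1$) and conjugacy (for general $i$) affords $\chi_5^G$; hence $e_{\eta_i}^{S_{4,(i)}}$ lies in the Wedderburn component of $\chi_5^G$. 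That component is $M_7(\mathbb{Q})$ --- by (1), $\chi_5^G$ is afforded by the rational representation $\Ind_G^{S_{4,(1)}}(\eta_1)$ --- with centre $\mathbb{Q}\,e_5^G$. Thus $\epsilon=c\,e_5^G$ for some $c\in\mathbb{Q}$, and comparing the coefficient of $1\in G$ (equal to $7/24$ on both sides) forces $c=1$. Finally, all these idempotents lie in $\mathbb{Z}[1/24][G]$, so the identity, proved in $\mathbb{Q}[G]$, already holds in $\mathbb{Z}[1/24][G]$ and hence in $\mathbb{Z}_p[G]$ for the admissible $p$.

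For (3) and (4): each $e_{\eta_i}^{S_{4,(i)}}$ is idempotent by the standard computation for $e_\psi^H$ with $\psi$ linear and $p\nmid\#H$, which handles $i=j$. For $i\ne j$, (2) gives $e_5^G=\sum_i e_{\eta_i}^{S_{4,(i)}}$ and hence $\mathbb{Q}[G]e_5^G=\sum_i\mathbb{Q}[G]e_{\eta_i}^{S_{4,(i)}}$; since $\dim_\mathbb{Q}\mathbb{Q}[G]e_5^G=(\deg\chi_5^G)^2=49$ while each summand (being $\cong\Ind_G^{S_{4,(i)}}(\eta_i)$) has dimension $\deg\chi_5^G=7$, the sum is direct, and a family of idempotents summing to the unit of a ring and inducing a direct-sum decomposition of it into the corresponding left ideals is automatically pairwise orthogonal --- decompose $e_{\eta_j}^{S_{4,(j)}}=\sum_i e_{\eta_j}^{S_{4,(j)}}e_{\eta_i}^{S_{4,(i)}}$ and use uniqueness of the decomposition, noting $e_{\eta_j}^{S_{4,(j)}}\in\mathbb{Q}[G]e_{\eta_j}^{S_{4,(j)}}$. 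For (4), pick $g_i\in G$ with $g_i S_{4,(i)}g_i^{-1}=S_{4,(1)}$; then $g_i e_{\eta_i}^{S_{4,(i)}}g_i^{-1}=e_\psi^{S_{4,(1)}}$, where $\psi(h)=\eta_i(g_i^{-1}hg_i)$ is a non-trivial degree-$1$ character of $S_{4,(1)}$, hence $\psi=\eta_1$.

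The single genuinely non-formal point is (1): it really needs the fusion of $S_4$-conjugacy classes into $G$-conjugacy classes and the full value vector of the induced character --- which is what the computer supplies --- whereas (2)--(4) use only the rationality of $\chi_5^G$, the dimension of its Wedderburn component, elementary idempotent bookkeeping, and the routine remark that identities holding in $\mathbb{Q}[G]$ persist in $\mathbb{Z}_p[G]$ for the admissible primes.
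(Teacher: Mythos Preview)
Your argument is correct. The paper, by contrast, proves all four items by direct computer verification in Magma (Section~\ref{168-7_Com}): it checks the character identity $(1)$, the idempotent identity $(2)$, the $\binom{7}{2}$ orthogonality relations $(3)$, and exhibits explicit conjugating involutions for $(4)$, all by evaluation in the group algebra. Your route is conceptual rather than computational: once $(1)$ is established (and your fusion/centralizer calculation is easily checked by hand, using that $G$ has a unique class of each order $2$, $3$, $4$), you deduce $(2)$ from centrality of the sum together with the fact that $\Ind_G^{S_{4,(1)}}(\eta_1)$ realizes $\chi_5^G$ over $\mathbb{Q}$ (so the block is $M_7(\mathbb{Q})$ with scalar centre), then get $(3)$ by the dimension count $7\cdot 7=49=(\deg\chi_5^G)^2$ forcing directness, and $(4)$ from the uniqueness of the sign character on $S_4$. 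The paper's approach is uniform across all the lemmas in Sections~2--4 and needs no case analysis; yours explains \emph{why} the monomial case is special---the induced character is already irreducible, so no correction term appears and the idempotents are automatically orthogonal---and would not extend without modification to, say, Lemma~\ref{168-6_Lem}, where $\Ind$ is reducible and the paper must locate nontrivial elements $r_j$, $r_j'$ by machine.
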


It follows from Lemma \ref{168-7_Lem} that 
$$e_5^G M = \bigoplus_{i=1}^7 e_{\eta_i}^{S_{4,(i)}} M$$
and
$$\forall i, j\in\{1, 2, \ldots,\,7\},~
e_{\eta_i}^{S_{4,(i)}} M \simeq e_{\eta_i}^{S_{4,(j)}} M.
$$
Thus, Theorem \ref{MainTh1} holds in the case of $\chi=\chi_5^G$.

\subsection{The case of the characters of degree 8}
Put
$$
H_1=\langle(1, 4, 2)(5, 8, 7),~(1, 8, 2, 4, 3, 7, 5)\rangle,
$$
$$
H_2=\langle(1, 4, 7)(2, 8, 6),~(1, 7, 5, 6, 2, 4, 8)\rangle,
$$
$$
H_3=\langle(1, 5, 2)(3, 8, 4),~(1, 4, 2, 5, 6, 3, 8)\rangle,
$$
$$
H_4=\langle(1, 7, 4)(2, 6, 8),~(1, 8, 6, 3, 7, 4, 2)\rangle,
$$
$$
H_5=\langle(1, 8, 5)(2, 7, 6),~(1, 5, 3, 7, 6, 8, 2)\rangle,
$$
$$
H_6=\langle(1, 5, 4)(2, 6, 3),~(1, 4, 7, 6, 3, 5, 2)\rangle,
$$
$$
H_7=\langle(1, 7, 5)(3, 6, 8),~(1, 8, 5, 7, 4, 3, 6)\rangle,
$$
and
$$
H_8=\langle(2, 3, 4)(6, 8, 7),~(2, 6, 7, 5, 3, 4, 8)\rangle.
$$
The subgroup $H_1$ is equal to $H$ appeared in Section 2.2
and these are all conjugate with $H_1$ in $G$. 
Let $\eta_i$ be the non-trivial character of $H_i$ 
with $\deg(\eta_i)=1$ and $\eta_i(t_i)=\zeta_3^2$,
where $t_i$ is the generator of order 3 when $H_i$ is defined.
We can get the following lemma by using a computer
(see Section \ref{168-8_Com}). 

\begin{comment}
We use $C_7 C_3$ appeared in Section 2.2.
Since $G$ has the eight subgroups that are conjugate with $C_7 C_3$,
let $H_i$ denote the conjugate group of $C_7 C_3$ generated by $s_i$ and $t_i$ 
for $i\in\{1,\cdots,8\}$,
where 
$s_1=(1, 4, 2)(5, 8, 7)$, $t_1=(1, 8, 2, 4, 3, 7, 5)$, 
$s_2=(1, 4, 7)(2, 8, 6)$, $t_2=(1, 7, 5, 6, 2, 4, 8)$,
$s_3=(1, 5, 2)(3, 8, 4)$, $t_3=(1, 4, 2, 5, 6, 3, 8)$,
$s_4=(1, 7, 4)(2, 6, 8)$, $t_4=(1, 8, 6, 3, 7, 4, 2)$,
$s_5=(1, 8, 5)(2, 7, 6)$, $t_5=(1, 5, 3, 7, 6, 8, 2)$,
$s_6=(1, 5, 4)(2, 6, 3)$, $t_6=(1, 4, 7, 6, 3, 5, 2)$,
$s_7=(1, 7, 5)(3, 6, 8)$, $t_7=(1, 8, 5, 7, 4, 3, 6)$,
$s_8=(2, 3, 4)(6, 8, 7)$, $t_8=(2, 6, 7, 5, 3, 4, 8)$.
\end{comment}

\begin{lem}\label{168-8_Lem}
Assume that $p\neq 2,3$ and $7$.
\begin{enumerate}
\setlength{\itemsep}{5pt}
\item $\chi_6^G = \Ind_G^{H_i}\left(\eta_i \right)$.

\item $\displaystyle e_6^G =\sum_{i=1}^8 e_{\eta_i}^{H_i}$.

\item $e_{\eta_i}^{H_i} e_{\eta_j}^{H_j} 
= \delta_{ij} e_{\eta_i}^{H_i}$, 
where $\delta_{ij}$ is the Kronecker delta.

\item For all $i\in \{1,2,\ldots,8\}$, there exists $g_i \in G$ such that
$$e_{\eta_1}^{H_1} = g_i\,e_{\eta_i}^{H_i} g_i^{-1}.$$

\end{enumerate}
\end{lem}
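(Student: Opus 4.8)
The plan is to verify each of the four assertions of Lemma~\ref{168-8_Lem} by the same computer-algebra strategy already employed for Lemmas~\ref{168-3_Lem} and~\ref{168-6_Lem}, working inside the group algebra $\mathbb{Q}(\zeta_3,\zeta_7)[G]$ and then controlling denominators. For part~(1), I would observe that $\Ind_G^{H_i}(\eta_i)$ is a character of degree $|G|/|H_i|=168/21=8$, and that by Frobenius reciprocity the multiplicity of each $\chi_j^G$ in it equals $\langle \eta_i, \mathrm{Res}_{H_i}\chi_j^G\rangle_{H_i}$; computing these inner products from the character table of $G$ (Table~\ref{Ch168}) together with the known character table of $H_i\cong C_7\rtimes C_3$ shows that all multiplicities vanish except for $\chi_6^G$, which occurs with multiplicity $1$. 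Since $\eta_i$ has degree $1$, this is a genuine (non-virtual) induction, so no $a_i$ or cancellation is needed; this is the cleanest case of Theorem~\ref{MainTh1}, exactly parallel to the degree-$7$ case treated via Lemma~\ref{168-7_Lem}.

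For part~(2), the natural approach is to expand both sides as elements of $\mathbb{Q}(\zeta_3,\zeta_7)[G]$ and compare coefficients: the primitive central idempotent $e_6^G$ is computed directly from the formula $e_{\chi_6^G}^G=\tfrac{8}{168}\sum_{g}\chi_6^G(g^{-1})g$, while each $e_{\eta_i}^{H_i}=\tfrac{1}{21}\sum_{h\in H_i}\overline{\eta_i(h)}\,h$ embeds into $\mathbb{Q}(\zeta_3)[G]$. One checks $\sum_{i=1}^8 e_{\eta_i}^{H_i}=e_6^G$ as a $168$-term linear identity over $\mathbb{Q}(\zeta_3,\zeta_7)$; conceptually this holds because $\Ind_G^{H_i}(\eta_i)=\chi_6^G$ forces $e_{\eta_i}^{H_i}\cdot e_j^G=0$ for $j\neq 6$, so each $e_{\eta_i}^{H_i}$ lies in the (one-dimensional-over-$\mathbb{Q}(\zeta_3)$, after accounting for the Galois action on $\zeta_7$) component $e_6^G\,\mathbb{Q}(\zeta_3,\zeta_7)[G]$, and a degree/trace count pins the sum down. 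Part~(3): orthogonality of distinct $e_{\eta_i}^{H_i}$ follows since, combined with~(2), $\sum_i e_{\eta_i}^{H_i}$ being idempotent and each summand being idempotent of ``rank'' $8$ inside a space of the right total dimension forces the cross terms to vanish; alternatively one verifies $e_{\eta_i}^{H_i}e_{\eta_j}^{H_j}=0$ directly by the computer. Part~(4) amounts to exhibiting, for each $i$, an explicit $g_i\in G$ conjugating $H_i$ to $H_1$ and $\eta_i$ to $\eta_1$; since the $H_i$ are stated to be all $G$-conjugates of $H_1$, such $g_i$ exist, and the normalization $\eta_i(t_i)=\zeta_3^2$ with $t_i$ the chosen order-$3$ generator is precisely what makes the conjugation match the characters (the outer automorphism issue does not arise here because $N_G(H_1)/H_1$ acts on the two nontrivial linear characters of $H_1$, so one must choose $g_i$ compatibly, which the explicit generators given in the statement make possible).

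Since all $p\notin\{2,3,7\}$ are allowed and $k=3\cdot 7$, the only arithmetic subtlety is that the $e_{\eta_i}^{H_i}$ and $e_6^G$ have denominators dividing $21\cdot 8=2^3\cdot 3\cdot 7$, so all the identities of parts~(2)--(4) already hold in $\mathbb{Z}[1/2,1/3,1/7,\zeta_3,\zeta_7][G]$ and hence in $\mathbb{Z}_p[\zeta_3][G]$ (note $\zeta_7$ is \emph{not} needed: each $e_{\eta_i}^{H_i}$ has $\mathbb{Q}(\zeta_3)$-coefficients), without any further bad-prime exclusions beyond $\{2,3,7\}$ — in contrast to Lemma~\ref{168-3_Lem}, no analogue of the prime $29$ appears because here there is no $(1-e_6^G)$-to-$e_\eta^{H}$ change-of-generator step. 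The main obstacle I anticipate is purely organizational rather than mathematical: correctly pinning down the eight subgroups $H_i$ as honest $G$-conjugates with the right generators and, for part~(4), choosing the conjugating elements $g_i$ so that they simultaneously normalize the character data $\eta_i(t_i)=\zeta_3^2$; this bookkeeping (and the resulting machine computation, whose commands appear in Section~\ref{168-8_Com}) is exactly what makes the lemma something to be ``obtained by using a computer'' rather than by a one-line argument.

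Once Lemma~\ref{168-8_Lem} is in hand, the proof of Theorem~\ref{MainTh1} for $\chi=\chi_6^G$ is immediate and mirrors the degree-$7$ case verbatim: from~(2) and~(3) the map $\prod_{i=1}^8 e_{\eta_i}^{H_i}M\to e_6^G M$, $(x_i)_i\mapsto\sum_i x_i$, is an isomorphism of $\mathbb{Z}_p$-modules, so $e_6^G M\simeq\bigoplus_{i=1}^8 e_{\eta_i}^{H_i}M$, and by~(4) each summand is $\mathbb{Z}_p$-isomorphic to $N:=e_{\eta_1}^{H_1}M$; hence $e_6^G M\simeq N^{(8)}=N^{(\deg\chi_6^G)}$, giving condition~(1), and if $M$ is finite then $\#N=\#e_{\eta_1}^{H_1}M=(\#e_{\eta_1}^{H_1}M)^{a_1}$ with $m=1$, $a_1=1$, $H_1$, $\psi_1=\eta_1$, which is condition~(2).
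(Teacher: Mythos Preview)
Your proposal is correct and follows essentially the same approach as the paper: the paper's proof of Lemma~\ref{168-8_Lem} is literally a pointer to the Magma verification in Section~\ref{168-8_Com}, which checks (1) via \texttt{Induction}, (2) via \texttt{e6 eq \&+eH}, (3) by checking all products $e_{\eta_i}^{H_i}e_{\eta_j}^{H_j}$ vanish for $i\neq j$, and (4) by exhibiting explicit conjugators $g_i$. Your added conceptual commentary (Frobenius reciprocity for (1), the observation that $e_{\eta_i}^{H_i}e_j^G=0$ for $j\neq 6$, the rank/idempotent argument for (3)) is sound motivation the paper does not bother to record, though the parenthetical about $e_6^G\,\mathbb{Q}(\zeta_3,\zeta_7)[G]$ being ``one-dimensional'' is not quite right---that block is a full $8\times 8$ matrix algebra---and your worry about $N_G(H_1)/H_1$ is moot since $H_1$ is self-normalizing (eight conjugates, index $8$).
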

The proof of Theorem \ref{MainTh1} in the case of $\chi=\chi_6^G$
is similar to the argument in Section 2.3.

%#!platex ZZpG.tex && dvipdfmx ZZpG.dvi && rm ZZpG.dvi

\section{For the simple group of order 504}
Let $G$ be the simple group of order 504.
Let $\mathbb{F}_8$ denote the finite field of order 8 with a generator $a$.
It is known that 
$$G \simeq 
\langle
\begin{pmatrix}
0 &1 \\
1 &1
\end{pmatrix},~
\begin{pmatrix}
a^3 &a \\
a^4 &a
\end{pmatrix}
\rangle
$$
and the character table of $G$ is given by Table \ref{Ch504}.
\vskip1mm
\begin{table}[h]
\begin{center}
\caption{the character table of $G$}\label{Ch504}
\begin{tabular}{l|ccccccccc}
\multicolumn{1}{c|}{} &$1^G$ &$c_2$ &$c_3$ &$c_4$ &$c_5$ &$c_6$ &$c_7$ &$c_8$ &$c_9$ \\ \hline 
$\chi_1^G$  &$1$ &$1$ &$1$ &$1$ &$1$ &$1$ &$1$ &$1$ &$1$ \\
$\chi_2^G$  &$7$ &$-1$ &$-2$ &$0$ &$0$ &$0$ &$1$ &$1$ &$1$ \\
$\chi_3^G$  &$7$ &$-1$ &$-1$ &$0$ &$0$ &$0$ &$\omega_9$ &$\omega_9^4$ &$\omega_9^2$ \\
$\chi_4^G$  &$7$ &$-1$ &$-1$ &$0$ &$0$ &$0$ &$\omega_9^4$ &$\omega_9^2$ &$\omega_9$ \\
$\chi_5^G$  &$7$ &$-1$ &$-1$ &$0$ &$0$ &$0$ &$\omega_9^4$ &$\omega_9^2$ &$\omega_9$ \\
$\chi_6^G$  &$8$ &$0$ &$-1$ &$1$ &$1$ &$1$ &$-1$ &$-1$ &$-1$ \\
$\chi_7^G$  &$9$ &$1$ &$0$ &$\omega_7$ &$\omega_7^3$ &$\omega_7^2$ &$0$ &$0$ &$0$ \\
$\chi_8^G$  &$9$ &$1$ &$0$ &$\omega_7^2$ &$\omega_7$ &$\omega_7^3$ &$0$ &$0$ &$0$ \\
$\chi_9^G$  &$9$ &$1$ &$0$ &$\omega_7^3$ &$\omega_7^2$ &$\omega_7$ &$0$ &$0$ &$0$ \\
\end{tabular}
\end{center}
\end{table}

\noindent
Here
$c_2=\begin{pmatrix}
1 & 1 \\
0 & 1  
\end{pmatrix}$,
$c_3=\begin{pmatrix}
0 & 1 \\
1 & 1  
\end{pmatrix}$,
$c_4=\begin{pmatrix}
a^2 & 0 \\
0 & a^5  
\end{pmatrix}$,
$c_5=\begin{pmatrix}
a & 0 \\
0 & a^6  
\end{pmatrix}$,
$c_6=\begin{pmatrix}
a^3 & 0 \\
0 & a^4  
\end{pmatrix}$,
$c_7=\begin{pmatrix}
0 & 1 \\
1 & a^2  
\end{pmatrix}$,
$c_8=\begin{pmatrix}
0 & 1 \\
1 & a  
\end{pmatrix}$,
$c_9=\begin{pmatrix}
0 & 1 \\
1 & a^4  
\end{pmatrix}$,
$\omega_9 = -(\zeta_9^4+\zeta_9^5)$ and $\omega_7 = \zeta_7^2+\zeta_7^5$.
We will denote $e_{\chi_i^H}^H$ as $e_{\chi_i}^H$ for a subgroup $H$ of $G$.
%#!platex ZZpG.tex && dvipdfmx ZZpG.dvi && rm ZZpG.dvi

\subsection{The case of the characters of degree 7}
First, we treat the character $\chi_2^G$.
Put
$$
s=\begin{pmatrix}
0 &a^5 \\
a^2 &0  
\end{pmatrix},~
t=\begin{pmatrix}
a^5 &a^2 \\
a^6 &a^5  
\end{pmatrix},~
u=\begin{pmatrix}
a &a
\\
a^5 &a  
\end{pmatrix}
~\text{and}~
h=\begin{pmatrix}
a^4 &1
\\
1 &0   
\end{pmatrix}.
$$
Let $C_2^{(3)}=\langle s,t,u \rangle$, 
then $C_2^{(3)}\simeq (\mathbb{Z}/2\mathbb{Z})^{(3)}$ holds. 
Let $\chi_i^{C_2^{(3)}}$ be the non-trivial character of $C_2^{(3)}$ for $i\in\{2,3,\cdots,8\}$. 
Let $\chi_2^{C_2^{(3)}}$ satisfy that $\chi_2^{C_2^{(3)}}(s)=-1$, $\chi_2^{C_2^{(3)}}(t)=1$ and $\chi_2^{C_2^{(3)}}(u)=1$.
Let $C_9=\langle h\rangle$, 
then $C_9\simeq \mathbb{Z}/9\mathbb{Z}$ holds.
Let $\chi_2^{C_9}$ be the character of $C_9$ with $\chi_2^{C_9}(h)=\zeta_3^2$.
We can get the following lemma by using a computer
(see Section \ref{504-7_Com}). 

\begin{lem}\label{504-7(2)_Lem}
Assume that $p\not\in\{2,3,7\}.$
\begin{enumerate}
\setlength{\itemsep}{5pt}
\item $\chi_2^G = \Ind_G^{C_2^{(3)}}\bigl(\chi_2^{C_2^{(3)}}\bigr)
-\Ind_G^{C_9}\bigl(\chi_2^{C_9}\bigr)$.

\item $\displaystyle e_2^G =
\Bigl(\sum_{i=2}^8 e_{\chi_i}^{C_2^{(3)}}\Bigr)e_2^G$.

\item For all $i\in \{2,3,\ldots,8\}$, there exists $g_i \in G$ such that
$$e_{\chi_i}^{C_2^{(3)}} = g_i\,e_{\chi_2}^{C_2^{(3)}} g_i^{-1}.$$

\item $\bigl(1- e_2^G\bigr) e_{\chi_2}^{C_2^{(3)}}\,\mathbb{Q}(\zeta_3)[G]=
e_{\chi_2}^{C_2^{(3)}} e_{\chi_2}^{C_9}\,\mathbb{Q}(\zeta_3)[G].$

\item $\mathbb{Q}(\zeta_3)[G]\,e_{\chi_2}^{C_2^{(3)}} e_{\chi_2}^{C_9} 
= \mathbb{Q}(\zeta_3)[G]\,e_{\chi_2}^{C_9}.$
\end{enumerate}
\end{lem}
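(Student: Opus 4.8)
\textbf{Proof proposal for Lemma \ref{504-7(2)_Lem}.}

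The plan is to reduce everything to a finite explicit computation in the group algebra $\mathbb{Q}(\zeta_3)[G]$, exactly in the spirit of the Remark following Lemma \ref{168-3_Lem}. For part (1), I would first verify the character identity $\chi_2^G = \Ind_G^{C_2^{(3)}}\bigl(\chi_2^{C_2^{(3)}}\bigr)-\Ind_G^{C_9}\bigl(\chi_2^{C_9}\bigr)$ by evaluating both sides on the nine conjugacy class representatives $1^G, c_2, \dots, c_9$ using Frobenius reciprocity / the induction formula and the given character table (Table \ref{Ch504}); this is a routine Brauer-induction bookkeeping, and the degree check $7 = [G:C_2^{(3)}]\cdot 1 - [G:C_9]\cdot 1 = 63 - 56$ already fixes the coefficients $a_i = \pm 1$. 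Part (3) is also immediate once one knows that $C_2^{(3)}$ is not normal in $G$: the seven non-trivial characters $\chi_i^{C_2^{(3)}}$ are permuted transitively by the normalizer action (here the element $h$ of order $9$ normalizes $C_2^{(3)}$ and acts as a $7$-cycle on the non-trivial characters, mirroring Lemma \ref{168-3_Lem}(3)), so one simply records the conjugating elements $g_i$ and checks $e_{\chi_i}^{C_2^{(3)}} = g_i e_{\chi_2}^{C_2^{(3)}} g_i^{-1}$ by direct substitution.

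For part (2), the identity $e_2^G = \bigl(\sum_{i=2}^8 e_{\chi_i}^{C_2^{(3)}}\bigr)e_2^G$ should follow from (1) together with the general fact that $e_2^G$ lies in the span of $\{e_\psi^{C_2^{(3)}} : \psi\}$-images after multiplication, because $\chi_2^G$ restricted to $C_2^{(3)}$ contains the non-trivial characters (and does not contain the trivial one). Concretely, $\langle \mathrm{Res}_{C_2^{(3)}}\chi_2^G, \mathbf{1}\rangle = \langle \chi_2^G, \Ind\,\mathbf{1}\rangle = 0$, while each non-trivial $\chi_i^{C_2^{(3)}}$ does appear; hence $e_2^G = \bigl(\sum_{i=2}^8 e_{\chi_i}^{C_2^{(3)}}\bigr)e_2^G$ holds already in $\mathbb{Q}(\zeta_3)[G]$, and I would confirm it by the explicit group-algebra arithmetic.

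Parts (4) and (5) are the heart of the lemma and where the computer is genuinely needed. The containments $e_{\chi_2}^{C_2^{(3)}} e_{\chi_2}^{C_9}\,\mathbb{Q}(\zeta_3)[G] \subseteq \bigl(1-e_2^G\bigr)e_{\chi_2}^{C_2^{(3)}}\,\mathbb{Q}(\zeta_3)[G]$ and $\mathbb{Q}(\zeta_3)[G]\,e_{\chi_2}^{C_2^{(3)}}e_{\chi_2}^{C_9} \subseteq \mathbb{Q}(\zeta_3)[G]\,e_{\chi_2}^{C_9}$ are formal once one observes that $\Ind_G^{C_9}(\chi_2^{C_9})$ is a constituent of $(1-e_2^G)$-space and that $\chi_2^{C_9}$ occurs in $\mathrm{Res}_{C_9}\chi_i^G$ for suitable $i\neq 2$; the reverse containments, i.e. the existence of $q \in \mathbb{Q}(\zeta_3)[G]$ with $\bigl(1-e_2^G\bigr)e_{\chi_2}^{C_2^{(3)}} q = e_{\chi_2}^{C_2^{(3)}}e_{\chi_2}^{C_9}$ and $e_{\chi_2}^{C_2^{(3)}}e_{\chi_2}^{C_9} q' = \bigl(1-e_2^G\bigr)e_{\chi_2}^{C_2^{(3)}}$, are solved by setting up the $504$-variable linear system over $\mathbb{Q}(\zeta_3)$ obtained by comparing $g$-coefficients (Steps 1--4 of the Remark after Lemma \ref{168-3_Lem}) and exhibiting an explicit solution. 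The main obstacle — really the only substantive one — is ensuring these linear systems are consistent, which amounts to the claim that the two cyclic right (resp. left) $\mathbb{Q}(\zeta_3)[G]$-modules coincide; this is guaranteed by the Wedderburn decomposition (each simple component of $\mathbb{Q}(\zeta_3)[G]$ is a matrix algebra, and a cyclic submodule generated by a rank-$r$ idempotent-type element is determined by which isotypic components it hits and with what multiplicity), but verifying it requires the explicit computation rather than an abstract one-line argument. I would carry out: (i) character identity and degree check for (1); (ii) conjugation data for (3); (iii) the idempotent relation (2); (iv) the GAP/Magma linear-algebra solve producing the four elements $q$ witnessing (4) and (5), recording that the coefficients lie in $\mathbb{Z}[1/2,1/3,1/7,\zeta_3]$ so the statement descends to $\mathbb{Z}_p[\zeta_3][G]$ for $p\notin\{2,3,7\}$ — though as stated the lemma only claims the identities over $\mathbb{Q}(\zeta_3)[G]$, so step (iv) need only produce rational solutions.
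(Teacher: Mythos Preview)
Your proposal is essentially the paper's own approach: the paper proves Lemma~\ref{504-7(2)_Lem} purely by computer verification (Section~\ref{504-7_Com}), checking each of (1)--(5) directly in Magma, and your plan to reduce everything to explicit group-algebra arithmetic and linear solves over $\mathbb{Q}(\zeta_3)$ is exactly this. Your added theoretical commentary for (2) (the Frobenius-reciprocity argument that $\langle\mathrm{Res}_{C_2^{(3)}}\chi_2^G,\mathbf{1}\rangle=0$, whence $e_{\mathbf{1}}^{C_2^{(3)}}e_2^G=0$) and for one inclusion in (4) (that $\langle\mathrm{Res}_{C_9}\chi_2^G,\chi_2^{C_9}\rangle=0$ forces $e_2^G e_{\chi_2}^{C_9}=0$) are correct and give conceptual content the paper omits.

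One factual slip to fix in (3): the element $h$ has order $9$ and does \emph{not} normalise $C_2^{(3)}$ (the normaliser of the elementary abelian $2$-group in $G\cong\mathrm{PSL}(2,8)$ is a Borel of order $56$), and an order-$9$ element cannot act as a $7$-cycle. The paper instead uses an upper-triangular element $gg$ of order $7$ in the Borel and checks $e_{\chi_i}^{C_2^{(3)}}=gg^{k}\,e_{\chi_2}^{C_2^{(3)}}\,gg^{-k}$ for suitable $k$; replace $h$ by such an element and your argument for (3) goes through.
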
 
The proof of Theorem \ref{MainTh1} in the case of $\chi=\chi_2^G$
is similar to the argument in Section 2.1.
Next, we treat the character $\chi_3^G$.
Let $\chi_8^{C_9}$ be the character of $C_9$ with $\chi_8^{C_9}(h)=\zeta_9$.
We can get the following lemma by using a computer
(see Section \ref{504-7_Com}).

\begin{lem}\label{504-7(3)_Lem} 
Assume that $p\not\in\{2,3,7\}.$
\begin{enumerate}
\setlength{\itemsep}{5pt}
\item $\chi_3^G = \Ind_G^{C_2^{(3)}}\bigl(\chi_2^{C_2^{(3)}}\bigr)
-\Ind_G^{C_9}\bigl(\chi_8^{C_9}\bigr)$.

\item $\displaystyle e_3^G =
\Bigl(\sum_{i=2}^8 e_{\chi_i}^{C_2^{(3)}}\Bigr)e_3^G$.

\item $\bigl(1- e_3^G\bigr) e_{\chi_2}^{C_2^{(3)}}\,\mathbb{Q}(\zeta_9)[G]=
e_{\chi_2}^{C_2^{(3)}} e_{\chi_8}^{C_9}\,\mathbb{Q}(\zeta_9)[G].$

\item $\mathbb{Q}(\zeta_9)[G]\,e_{\chi_2}^{C_2^{(3)}} e_{\chi_8}^{C_9} 
= \mathbb{Q}(\zeta_9)[G]\,e_{\chi_8}^{C_9}.$
\end{enumerate}
\end{lem}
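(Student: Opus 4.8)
The plan is to argue exactly as for Lemma~\ref{504-7(2)_Lem}: here the subgroup $C_2^{(3)}$ and its character $\chi_2^{C_2^{(3)}}$ are unchanged, only the character of $C_9$ has been replaced by $\chi_8^{C_9}$ (with $\chi_8^{C_9}(h)=\zeta_9$ in place of $\chi_2^{C_9}(h)=\zeta_3^2$), and the coefficient field has been enlarged from $\Q(\zeta_3)$ to $\Q(\zeta_9)$ accordingly. So I would run the same four-step verification.

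For (1), both sides have degree $504/8-504/9=63-56=7=\deg(\chi_3^G)$. Since $C_2^{(3)}$ and $C_9$ are a Sylow $2$- and a Sylow $3$-subgroup of $G$, the induced characters $\Ind_G^{C_2^{(3)}}(\chi_2^{C_2^{(3)}})$ and $\Ind_G^{C_9}(\chi_8^{C_9})$ vanish off the classes of $G$ of $2$-power and of $3$-power order respectively; on the remaining classes one evaluates each by the usual induced-character formula, which needs only the fusion of the elements of $C_2^{(3)}$ and of $C_9$ into $G$-classes together with the values $\chi_2^{C_2^{(3)}}$ and $\chi_8^{C_9}$ take there. Comparing with the $\chi_3^G$-row of Table~\ref{Ch504} gives the identity; this is a finite check, performed by the computation of Section~\ref{504-7_Com}.

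For (2), let $e_{\chi_1}^{C_2^{(3)}}$ denote the idempotent of the trivial character of $C_2^{(3)}$. Since $\sum_{i=1}^{8}e_{\chi_i}^{C_2^{(3)}}=1$, assertion (2) is equivalent to $e_{\chi_1}^{C_2^{(3)}}e_3^G=0$, and — as $\Q(\zeta_9)[G]$ is semisimple — this holds precisely when the trivial character of $C_2^{(3)}$ is not a constituent of $\chi_3^G|_{C_2^{(3)}}$. Every non-identity element of the elementary abelian group $C_2^{(3)}$ is an involution and $G$ has a single involution class, namely $c_2$, so the multiplicity in question is $\tfrac18\bigl(7+7\chi_3^G(c_2)\bigr)=\tfrac18(7-7)=0$, which yields (2). (The relation $e_{\chi_i}^{C_2^{(3)}}=g_ie_{\chi_2}^{C_2^{(3)}}g_i^{-1}$ used afterward involves only the unchanged $C_2^{(3)}$-data, so it is item (3) of Lemma~\ref{504-7(2)_Lem}.)

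Finally, (3) and (4) are equalities of principal one-sided ideals in the semisimple $\Q(\zeta_9)$-algebra $\Q(\zeta_9)[G]$, and each reduces to exhibiting elements of $\Q(\zeta_9)[G]$ realising the stated mutual divisibilities: $q_1,q_2$ with $(1-e_3^G)e_{\chi_2}^{C_2^{(3)}}q_1=e_{\chi_2}^{C_2^{(3)}}e_{\chi_8}^{C_9}$ and $(1-e_3^G)e_{\chi_2}^{C_2^{(3)}}=e_{\chi_2}^{C_2^{(3)}}e_{\chi_8}^{C_9}q_2$ for (3), and $q_3,q_4$ with $q_3\,e_{\chi_2}^{C_2^{(3)}}e_{\chi_8}^{C_9}=e_{\chi_8}^{C_9}$ and $e_{\chi_2}^{C_2^{(3)}}e_{\chi_8}^{C_9}=q_4\,e_{\chi_8}^{C_9}$ for (4). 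I would find each $q_j$ by the linear-algebra device of the remark following Lemma~\ref{168-3_Lem}: write $q_j=\sum_{g\in G}T_g\,g$ with unknowns $T_g\in\Q(\zeta_9)$, expand both sides of the relevant relation in the group basis, and equate the $\#G=504$ coefficients, obtaining a linear system over $\Q(\zeta_9)$ in $504$ unknowns whose solvability is exactly what must be shown. Its consistency is in keeping with (1): since $\chi_3^G$ and $\Ind_G^{C_9}(\chi_8^{C_9})$ are honest characters, the identity of (1) gives a genuine $G$-module decomposition $\Ind_G^{C_2^{(3)}}(\chi_2^{C_2^{(3)}})\cong\chi_3^G\oplus\Ind_G^{C_9}(\chi_8^{C_9})$, so the dimensions in each simple factor of $\Q(\zeta_9)[G]$ line up by a Wedderburn/rank count — but making this precise for the actual elements is the real content. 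The main obstacle will therefore be this concrete step: certifying the consistency of the $504$-variable systems and actually producing $q_1,q_2,q_3,q_4$, which is the computation of Section~\ref{504-7_Com}. Once the $q_j$ are available, Theorem~\ref{MainTh1} for $\chi=\chi_3^G$ follows just as in Section~2.1, with $C_2^{(3)}$, $C_9$, $\chi_8^{C_9}$ playing the roles of $C_7$, $D_4$, $\eta$.
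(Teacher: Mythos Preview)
Your proposal is correct and, for parts (1), (3) and (4), follows the paper's approach: the paper simply says ``we can get the following lemma by using a computer (see Section~\ref{504-7_Com})'' and then verifies each item in Magma. One small difference worth noting: for (3) and (4) you describe explicitly solving $504$-variable linear systems over $\Q(\zeta_9)$ to produce witnesses $q_1,\dots,q_4$, as was done in Section~2.1; in Section~\ref{504-7_Com} the paper instead checks the two ideal equalities directly with Magma's \texttt{eq} on one-sided ideals, without ever exhibiting the $q_j$. Both methods are valid; your route has the advantage of actually producing the elements (and hence the finite set of bad primes), while the paper's route is computationally lighter.

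For part (2) you do something genuinely different and nicer than the paper. The paper just evaluates $e_3^G$ and $\sum_{i=2}^{8}e_{\chi_i}^{C_2^{(3)}}$ in the group algebra over $F=\Q(\omega_9)$ and checks their product equals $e_3^G$. You instead observe that (2) is equivalent to $e_{\chi_1}^{C_2^{(3)}}e_3^G=0$, i.e.\ to the vanishing of the multiplicity $\langle\chi_3^G|_{C_2^{(3)}},\mathbf{1}\rangle$, and compute this multiplicity by hand from the single value $\chi_3^G(c_2)=-1$. This is a clean, computation-free argument that the paper does not give, and it applies verbatim to $\chi_2^G,\chi_4^G,\chi_5^G$ as well.
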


The proof of Theorem \ref{MainTh1} in the case of $\chi=\chi_3^G$
is similar to that of the case of $\chi=\chi_2^G$.
Since propositions similar to Lemma \ref{504-7(3)_Lem} hold
in the case of $\chi_4^G$ and $\chi_5^G$,
we are able to prove Theorem \ref{MainTh1} in these cases in the same way.

%#!lualatex ZZpG.tex

\subsection{The case of the characters of degree 8}
Put
\begin{align*}
&D_{7,(1)} = \langle 
\begin{pmatrix}
0 &a^6 \\
a &0  
\end{pmatrix},~
\begin{pmatrix}
a^2 &a \\
a^3 &a^3  
\end{pmatrix}\rangle, 
&D_{7,(2)} = \langle 
\begin{pmatrix}
0 &a^2 \\
a^5 &0  
\end{pmatrix},~
\begin{pmatrix}
a^2 &a^4 \\
1 &a^3  
\end{pmatrix}\rangle,\\
&D_{7,(3)} = \langle 
\begin{pmatrix}
1 &a^4 \\
0 &1  
\end{pmatrix},~
\begin{pmatrix}
a^6 &0 \\
a &a  
\end{pmatrix}\rangle,
&D_{7,(4)} = \langle 
\begin{pmatrix}
a^4 &a^6 \\
a^4 &a^4  
\end{pmatrix},~
\begin{pmatrix}
a^4 &a^6 \\
a^6 &1  
\end{pmatrix}\rangle,\\
&D_{7,(5)} = \langle 
\begin{pmatrix}
a &a^4 \\
a^2 &a  
\end{pmatrix},~
\begin{pmatrix}
a^5 &a^3 \\
a^4 &0  
\end{pmatrix}\rangle,
&D_{7,(6)} = \langle 
\begin{pmatrix}
a^4 &1 \\
a^3 &a^4  
\end{pmatrix},~
\begin{pmatrix}
1 &1 \\
a^5 &a^4  
\end{pmatrix}\rangle,\\
&D_{7,(7)} = \langle 
\begin{pmatrix}
1 &0 \\
1 &1  
\end{pmatrix},~
\begin{pmatrix}
0 &a^5 \\
a^2 &a^5  
\end{pmatrix}\rangle,
&D_{7,(8)} = \langle 
\begin{pmatrix}
a^6 &a^2 \\
a^2 &a^6  
\end{pmatrix},~
\begin{pmatrix}
a &a^2 \\
0 &a^6  
\end{pmatrix}\rangle.
\end{align*}

These are all conjugate to $D_{7,(1)}$ in $G$ 
and 
$$D_{7,(i)}\cap D_{7,(j)} = \{1_G\}$$
holds if $i\neq j \in\{1,2,\cdots,8\}$.
Let $\varepsilon_i$ denote the trivial character of $D_{7,(i)}$.
Put
$$
D_9 = \langle 
\begin{pmatrix}
a^5 &a^3 \\
a^5 &a^5  
\end{pmatrix},~
\begin{pmatrix}
a^4 &1 \\
1 &0  
\end{pmatrix}\rangle
$$
then $D_9$ is isomorphic to the dihedral group of order 18.
Let $\varepsilon'$ denote the trivial character of $D_9$.
We can get the following lemma by using a computer
(see Section \ref{504-8_Com}).

\begin{lem}\label{504-8_Lem}
Assume that $p\not\in\{2,3,7\}.$
\begin{enumerate}
\setlength{\itemsep}{5pt}
\item $\chi_6^G = \Ind_G^{D_{7,(1)}}(\varepsilon_1)-\Ind_G^{D_9}(\varepsilon')$.

\item $\displaystyle e_6^G\in
\Bigl( \sum_{i=1}^8 e_{\varepsilon_i}^{D_{7,(i)}} \Bigr)\,\mathbb{Q}[G]$.

\item For all $j\in \{1,2,\ldots,8\}$, 
there exists 
$r_j\in e_{\varepsilon_j}^{D_{7,(j)}}\mathbb{Q}[G]\setminus\{0\}$ 
such that
%$$r_j \in e_{\varepsilon_j}^{D_{7,(j)}}\,\mathbb{Q}[G]
%\,\cap\,
%\mathrm{Right\text{-}Ann} \left(
%\sum_{i\in\{1,2,\cdots,8\}\setminus\{j\}}\hspace{-6mm} 
%e_{\varepsilon_i}^{D_{7,(i)}}
%~~~\mathbb{Q}[G]\right)
%$$
$$\forall\, i\in\{1,2,\ldots,8\}\setminus\{j\},~
r_j  e_{\varepsilon_i}^{D_{7,(i)}} = 0$$
and
$$ e_6^G e_{\varepsilon_j}^{D_{7,(j)}} \in\mathbb{Q}[G]\,
r_j e_6^G e_{\varepsilon_j}^{D_{7,(j)}}\setminus\{0\}.
$$

\item 
For all $i\in \{1,2,\ldots,8\}$, there exists $g_i\in G$ such that
$$e_{\varepsilon_1}^{D_{7,(1)}} = g_i\,e_{\varepsilon_i}^{D_{7,(i)}} g_i^{-1}.$$

\item $\left(1-e_6^G\right)e_{\varepsilon_1}^{D_{7,(1)}}\,\mathbb{Q}[G] 
=
e_{\varepsilon_1}^{D_{7,(1)}}e_{\varepsilon'}^{D_9}\,\mathbb{Q}[G].$

\item $\mathbb{Q}[G]\,
e_{\varepsilon_1}^{D_{7,(1)}}e_{\varepsilon'}^{D_9}
=
\mathbb{Q}[G]\,e_{\varepsilon'}^{D_9}$. 
\end{enumerate}
\end{lem}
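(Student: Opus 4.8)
The plan is to establish the six assertions of Lemma~\ref{504-8_Lem} by the same computer-assisted strategy already used for Lemmas~\ref{168-3_Lem}, \ref{168-6_Lem} and \ref{504-7(2)_Lem}, adapted to the fact that here we only need identities over $\mathbb{Q}[G]$ (or $\mathbb{Q}(\zeta_k)[G]$) rather than over $\mathbb{Z}_p[\zeta_k][G]$, so the denominators are irrelevant and no prime-by-prime bookkeeping is required. For part~(1), I would expand both induced characters via the formula $\Ind_G^H(\psi)(g)=\frac{1}{\#H}\sum_{x\in G,\,x^{-1}gx\in H}\psi(x^{-1}gx)$, evaluate on the nine conjugacy class representatives $1^G,c_2,\dots,c_9$ using the known embeddings of $D_{7,(1)}$ and $D_9$ into $G$, and check that the resulting class function equals the row $\chi_6^G=(8,0,-1,1,1,1,-1,-1,-1)$ of Table~\ref{Ch504}; equivalently one verifies the virtual-character identity by computing inner products against each $\chi_j^G$, which a computer does instantly.

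For part~(2), the key point is that $\Ind_G^{D_{7,(1)}}(\varepsilon_1)=\chi_6^G\oplus(\text{stuff not involving }e_6^G\text{? no})$ — rather, since $e_6^G$ is a central idempotent, it suffices to exhibit one element $r_0\in\mathbb{Q}[G]$ with $e_6^G=\bigl(\sum_{i=1}^8 e_{\varepsilon_i}^{D_{7,(i)}}\bigr)r_0$; this is found by setting up the $504$-variable $\mathbb{Q}$-linear system obtained from comparing $g$-coefficients in $\bigl(\sum_i e_{\varepsilon_i}^{D_{7,(i)}}\bigr)\cdot(\sum_g T_g g)=e_6^G$ and solving it, exactly as in the Remark after Lemma~\ref{168-3_Lem}. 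Part~(3) is the analogue of Lemma~\ref{168-6_Lem}(3): for each $j$ one needs an element $r_j$ that annihilates the "wrong" idempotents $e_{\varepsilon_i}^{D_{7,(i)}}$, $i\neq j$, yet acts invertibly on $e_6^G e_{\varepsilon_j}^{D_{7,(j)}}$ — existence of such $r_j$ follows from semisimplicity of $\mathbb{Q}[G]$ since the eight blocks $e_{\varepsilon_i}^{D_{7,(i)}}e_6^G$ sit in distinct simple components of the Wedderburn decomposition, so one can project; concretely one again solves a linear system and records a witness. Part~(4) is conjugacy: since all $D_{7,(i)}$ are $G$-conjugate, choose $g_i$ with $g_i D_{7,(i)} g_i^{-1}=D_{7,(1)}$ and matching the trivial characters (automatic, as $\varepsilon_i$ is trivial), then $g_i e_{\varepsilon_i}^{D_{7,(i)}} g_i^{-1}=e_{\varepsilon_1}^{D_{7,(1)}}$.

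Parts~(5) and~(6) are the module-isomorphism inputs. For~(5) one shows the right ideals $(1-e_6^G)e_{\varepsilon_1}^{D_{7,(1)}}\mathbb{Q}[G]$ and $e_{\varepsilon_1}^{D_{7,(1)}}e_{\varepsilon'}^{D_9}\mathbb{Q}[G]$ coincide by producing $q_1,q_2\in\mathbb{Q}[G]$ with $(1-e_6^G)e_{\varepsilon_1}^{D_{7,(1)}}q_1=e_{\varepsilon_1}^{D_{7,(1)}}e_{\varepsilon'}^{D_9}$ and $e_{\varepsilon_1}^{D_{7,(1)}}e_{\varepsilon'}^{D_9}q_2=(1-e_6^G)e_{\varepsilon_1}^{D_{7,(1)}}$; the conceptual reason such $q_1,q_2$ exist is that, after removing the $e_6^G$-block, the $\mathbb{Q}[D_{7,(1)}]$-trivial-isotypic part of $\mathbb{Q}[G]$ is, by Frobenius reciprocity and the decomposition of $\Ind_G^{D_{7,(1)}}(\varepsilon_1)$ read off from part~(1), exactly one copy of each remaining simple component, matched by the $D_9$-side — concretely, solve the linear system. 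Part~(6) is identical with $e_{\varepsilon_1}^{D_{7,(1)}}e_{\varepsilon'}^{D_9}$ and $e_{\varepsilon'}^{D_9}$ on the left. The main obstacle — and the only real content — is verifying that the eight $\mathbb{Q}$-linear systems underlying parts~(2), (3), (5), (6) are actually solvable; this is guaranteed a priori by the representation-theoretic fact that $\mathrm{Ind}_G^{D_{7,(1)}}(\varepsilon_1)-\chi_6^G$ and $\mathrm{Ind}_G^{D_9}(\varepsilon')$ contain the same irreducible constituents each with multiplicity one (so the relevant right ideals are isomorphic as $\mathbb{Q}[G]$-modules, hence generated by mutually transporting elements), and the explicit witnesses are then extracted by the computation described in Section~\ref{504-8_Com}.
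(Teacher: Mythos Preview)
Your overall strategy is the paper's own: each of (1)--(6) is established by explicit computation in $\mathbb{Q}[G]$ via Magma, exactly as in Section~\ref{504-8_Com}, and your descriptions of the linear systems for (1), (2), (4), (5), (6) match what the paper does.

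The one genuine error is in your conceptual aside for (3). You assert that ``the eight blocks $e_{\varepsilon_i}^{D_{7,(i)}}e_6^G$ sit in distinct simple components of the Wedderburn decomposition, so one can project.'' This is false: $\chi_6^G$ is rational-valued, so $e_6^G\mathbb{Q}[G]$ is a \emph{single} simple component (isomorphic to $M_8(\mathbb{Q})$), and all eight $e_6^G e_{\varepsilon_i}^{D_{7,(i)}}$ live inside it as non-central rank-one idempotents. No Wedderburn projection separates them, and the existence of the $r_j$ is not a formal consequence of semisimplicity in the way you describe. The paper instead finds $r_j$ computationally as a basis element of the intersection of $e_{\varepsilon_j}^{D_{7,(j)}}\mathbb{Q}[G]$ with the left annihilator of $\sum_{i\neq j}e_{\varepsilon_i}^{D_{7,(i)}}\mathbb{Q}[G]$, and then verifies the second condition by a further ideal-membership check. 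Your fallback ``concretely one again solves a linear system and records a witness'' is the correct move and is what the paper does; the sentence preceding it should be deleted. Similarly, your multiplicity-one remark for (5)--(6) is heuristic rather than a proof (isomorphism of right ideals does not by itself give equality), but since you again defer to the explicit computation this does not break anything.
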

The proof of Theorem \ref{MainTh1} in the case of $\chi=\chi_6^G$
is similar to the argument in Section 2.2.
%#!platex ZZpG.tex && dvipdfmx ZZpG.dvi && rm ZZpG.dvi

\subsection{The case of the characters of degree 9}
Put
$$F_{8,(1)}=\langle 
\begin{pmatrix}
a &a \\
a^5 &a  
\end{pmatrix},~
\begin{pmatrix}
0 &a^5 \\
a^2 &0  
\end{pmatrix},~
\begin{pmatrix}
a^5 &a^2 \\
a^6 &a^5  
\end{pmatrix},~
\begin{pmatrix}
a^6 &a^6 \\
a^2 &a^4  
\end{pmatrix}
\rangle.$$
%then $F_{8,(1)}$ is isomorphic to the group of $F_8$ of order 56.
%Let $F_{8,(i)}~(1\le i\le 9)$ be the all conjugate groups with $F_{8,(1)}$ in $G$.
Since $G$ has exactly nine conjugate subgroups of $F_{8,(1)}$, let $F_{8,(2)}$, $\ldots$, $F_{8,(9)}$ be all of the other conjugate subgroups of $F_{8,(1)}$ in $G$.   
The subgroup $F_{8,(1)}$ has the eight characters: trivial, six non-trivial
with degree 1, and non-trivial with degree 7.
For $i\in \{1,2,\ldots,9\}$,
let $\chi^{F_{8,(i)}}$ be the non-trivial character of degree 1 
with $\chi^{F_{8,(i)}}(g)=\zeta_7^2$,
where 
$$g=
\begin{pmatrix}
a^6 &a^6 \\
a^2 &a^4  
\end{pmatrix}.$$
We can get the following lemma by using a computer
(see Section \ref{504-9_Com}).

\begin{lem}\label{504-9_Lem} 
Assume that $p\not\in\{2,3,7\}.$
\begin{enumerate}
\setlength{\itemsep}{5pt}
\item $\chi_7^G = \Ind_G^{F_{8,(1)}}\left(\chi^{F_{8,(1)}} \right).$

\item $\displaystyle e_7^G =\sum_{i=1}^9 e_{\chi}^{F_{8,(i)}}$.

\item $e_{\chi}^{F_{8,(i)}} e_{\chi}^{F_{8,(j)}} 
= \delta_{ij} e_{\chi}^{F_{8,(i)}}$, 
where $\delta_{ij}$ is the Kronecker delta.

\item For all $i\in \{1,2,\ldots,9\}$, there exists $g_i \in G$ such that
$$e_{\chi}^{F_{8,(1)}} = g_i\,e_{\chi}^{F_{8,(i)}} g_i^{-1}.$$
\end{enumerate}
\end{lem}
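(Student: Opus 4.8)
The plan is to deduce all four assertions from one structural observation: $F_{8,(1)}$ is a Borel (point-stabilizer) subgroup of $G\simeq\mathrm{PSL}_2(\mathbb{F}_8)$ of order $56$, it is self-normalizing, and it has exactly $[G:F_{8,(1)}]=9$ conjugates, so that the module affording $\chi_7^G$ will be realized as a monomial module on which the conjugate subgroups act diagonally in a coset basis. First I would prove (1). Because $\chi^{F_{8,(1)}}$ is linear, $\Ind_G^{F_{8,(1)}}(\chi^{F_{8,(1)}})$ has degree $[G:F_{8,(1)}]=9=\deg\chi_7^G$; I would verify its irreducibility through Mackey's criterion, equivalently by checking $\langle\Ind_G^{F_{8,(1)}}(\chi^{F_{8,(1)}}),\Ind_G^{F_{8,(1)}}(\chi^{F_{8,(1)}})\rangle=1$ via the double-coset formula, and then identify the result with $\chi_7^G$ by comparing values on the classes $c_4,c_5,c_6$ of Table~\ref{Ch504}. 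This is the finite verification the paper entrusts to a computer in Section~\ref{504-9_Com}.

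Granting (1), Frobenius reciprocity gives, for every conjugate $F_{8,(i)}$ and every irreducible $\chi_j^G$, the equality $\langle\mathrm{Res}_{F_{8,(i)}}\chi_j^G,\chi^{F_{8,(i)}}\rangle=\langle\chi_j^G,\Ind_G^{F_{8,(i)}}(\chi^{F_{8,(i)}})\rangle=\langle\chi_j^G,\chi_7^G\rangle=\delta_{j7}$. Hence the linear character $\chi^{F_{8,(i)}}$ occurs in the restriction of no irreducible of $G$ except $\chi_7^G$, and there with multiplicity one; this yields at once $e_7^G\,e_{\chi}^{F_{8,(i)}}=e_{\chi}^{F_{8,(i)}}$, i.e. each $e_{\chi}^{F_{8,(i)}}$ sits beneath the central idempotent $e_7^G$. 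For (4) I would use that the nine conjugates are permuted transitively, choosing $g_i\in G$ with $F_{8,(1)}=g_i F_{8,(i)} g_i^{-1}$; a direct computation then gives $g_i\,e_{\chi}^{F_{8,(i)}}\,g_i^{-1}=e_{\psi_i}^{F_{8,(1)}}$, where $\psi_i(y)=\chi^{F_{8,(i)}}(g_i^{-1}yg_i)$. Since $N_G(F_{8,(1)})=F_{8,(1)}$, the conjugating element is determined modulo inner automorphisms of $F_{8,(1)}$, which fix every character; thus it remains only to confirm that the prescribed characters were chosen so that $\psi_i=\chi^{F_{8,(1)}}$, and this compatibility is exactly what the computer check in Section~\ref{504-9_Com} records.

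For (3) and (2) I would exploit the monomial structure directly. Realize $V$, the module affording $\chi_7^G$, as $\Ind_G^{F_{8,(1)}}(\chi^{F_{8,(1)}})$; with the $g_i$ of the previous step the vectors $v_i=g_i^{-1}\otimes 1$ form a coset basis of $V$ (the cosets $g_i^{-1}F_{8,(1)}$ are distinct precisely because the $F_{8,(i)}$ are). One checks that $F_{8,(i)}=g_i^{-1}F_{8,(1)}g_i$ fixes the line $\mathbb{C}v_i$ and acts on it by $\chi^{F_{8,(i)}}$, while it permutes the remaining $v_j$ $(j\neq i)$ among themselves, since the Borel fixes one point of the $9$-point action and is transitive on the other eight. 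As the multiplicity of $\chi^{F_{8,(i)}}$ in $\mathrm{Res}_{F_{8,(i)}}\chi_7^G$ is one, the entire $\chi^{F_{8,(i)}}$-isotypic line of $V$ is $\mathbb{C}v_i$; therefore $e_{\chi}^{F_{8,(i)}}$ is the coordinate projection onto $\mathbb{C}v_i$ and annihilates $v_j$ for $j\neq i$. Distinct coordinate projections in $\mathrm{End}(V)\cong M_9(\mathbb{C})$ are mutually orthogonal, which is (3). Finally, by (3) the sum $\sum_{i=1}^9 e_{\chi}^{F_{8,(i)}}$ is an idempotent lying beneath $e_7^G$ by the second paragraph; a rank count on the regular representation (each $e_{\chi}^{F_{8,(i)}}$ has rank $[G:F_{8,(1)}]=9$, and orthogonality makes the ranks add) gives total rank $81=(\deg\chi_7^G)^2=\rk\,e_7^G$, forcing $\sum_{i=1}^9 e_{\chi}^{F_{8,(i)}}=e_7^G$, which is (2). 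All identities are equalities in $\mathbb{Q}(\zeta_7)[G]$, hence remain valid in $\mathbb{Z}_p[\zeta_7][G]$ under the standing hypothesis $p\nmid\#G$.

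The step I expect to be the main obstacle is the interface between (1) and (4): beyond showing that $\Ind_G^{F_{8,(1)}}(\chi^{F_{8,(1)}})$ is the irreducible $\chi_7^G$, one must exhibit representatives $g_i$ for which the transported character $\psi_i$ coincides with the prescribed $\chi^{F_{8,(i)}}$ (fixed by its value $\zeta_7^2$ on a designated generator). This matching of the nine character choices with the coset eigenlines is what makes the clean monomial argument for (3) and (2) go through, and it is precisely the explicit bookkeeping carried out by computer.
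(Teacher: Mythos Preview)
Your argument is correct and takes a genuinely different route from the paper. The paper establishes all four items by direct computer verification: the Magma code in Section~\ref{504-9_Com} checks the induction formula, the idempotent sum, the pairwise products, and the conjugacy relations one by one, with no structural explanation. You instead exploit the monomial realization of $\chi_7^G$ as $\Ind_G^{F_{8,(1)}}(\chi^{F_{8,(1)}})$: once the $\chi^{F_{8,(i)}}$ are known to be the transports of $\chi^{F_{8,(1)}}$ under coset representatives (which is the content of (4) and, as you correctly isolate, the only step that genuinely needs a finite check), Frobenius reciprocity forces each $e_\chi^{F_{8,(i)}}$ to lie beneath $e_7^G$, the coset basis $v_i=g_i^{-1}\otimes 1$ diagonalizes all nine of them simultaneously as rank-one coordinate projections in $e_7^G\mathbb{C}[G]\cong M_9(\mathbb{C})$, and the rank count $9\cdot 9=81=(\deg\chi_7^G)^2$ pins down their sum as $e_7^G$.

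What your approach buys is an explanation that transparently carries over to $\chi_8^G$ and $\chi_9^G$ (just change the linear character of the Borel), and that makes clear why Lemmas~\ref{168-7_Lem}, \ref{168-8_Lem}, \ref{504-9_Lem}, and \ref{360-7_Lem} all have the same shape: each is the case where the Brauer induction consists of a single monomial term from a self-normalizing subgroup. What the paper's approach buys is that it does not depend on verifying that the specific characters $\chi^{F_{8,(i)}}$ chosen in the text (via the value $\zeta_7^2$ on a fixed element, or via Magma's character-table ordering) really are the coherent transports; since $N_G(F_{8,(1)})=F_{8,(1)}$ and inner automorphisms fix linear characters, the transported character is uniquely determined, so this compatibility is a genuine constraint on the definitions and not automatic. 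You flag this honestly in your final paragraph, and it is indeed the one place where either a short explicit computation or the Magma check is unavoidable.
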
 
The proof of Theorem \ref{MainTh1} in the case of $\chi=\chi_7^G$
is similar to the argument in Section 2.3.
Since propositions similar to Lemma \ref{504-9_Lem} in the case of $\chi_{8}^{G}$ and $\chi_{9}^{G}$ hold, 
we can prove Theorem \ref{MainTh1} in these cases similarly.
%#!platex ZZpG.tex && dvipdfmx ZZpG.dvi && rm ZZpG.dvi

\section{For the simple group of order 360}
Let $G$ be the alternating group of degree 6,
then the character table of $G$ is given by Table \ref{Ch360}.
%\vskip1mm
\begin{table}[h]
\begin{center}
\caption{the character table of $G$}\label{Ch360}
\begin{tabular}{l|ccccccc}
\multicolumn{1}{c|}{} &$1^G$ &$c_2$ &$c_3$ &$c_4$ &$c_5$ &$c_6$ &$c_7$ \\ \hline 
$\chi_1^G$  &$1$ &$1$ &$1$ &$1$ &$1$ &$1$ &$1$ \\
$\chi_2^G$  &$5$ &$-1$ &$1$ &$0$ &$0$ &$2$ &$-1$ \\
$\chi_3^G$  &$5$ &$2$  &$1$ &$0$ &$0$ &$-1$ &$-1$ \\
$\chi_4^G$  &$8$ &$-1$ &$0$ &$\overline{\omega_5}$ &$\omega_5$ &$-1$ &$0$ \\
$\chi_5^G$  &$8$ &$-1$ &$0$ &$\omega_5$ &$\overline{\omega_5}$ &$-1$ &$0$ \\
$\chi_6^G$  &$9$ &$1$ &$0$ &$-1$ &$-1$ &$1$ &$0$ \\
$\chi_7^G$  &$10$ &$1$ &$-2$ &$0$ &$0$ &$1$ &$0$ \\
\end{tabular}
\end{center}
\end{table}
%\noindent
Here $c_2=(1,2,3)$, $c_3=(1,2)(3,4)$, $c_4=(1,2,3,4,5)$, $c_5=(1,3,4,5,2)$,
$c_6=(1,2,3)(4,5,6)$, $c_7=(1,2,3,4)(5,6)$ 
and $\omega_5 = (1+\sqrt{5})/2$.
%We will denote $e_{\chi_i^H}^H$ as $e_\chi^H$ for a subgroup $H$ of $G$.

%#!lualatex ZZpG.tex

\subsection{The case of the character of degree 5}
We treat the character $\chi_3^G$.
Put $s_1=(1, 2)(3,6)$ and $t_1=(1,4,2,5)(3,6)$.
Let $D_{4,(1)}$ be the subgroup of $G$ that is generated by $s_1$ and $t_1$.
Put $h_2=(2,4)(3,5)$, $h_3=(2,3)(5,6)$, $h_4=(1,3)(2,5)$ and $h_5=(1,5)(2,3)$.
For $i\in\{2, 3, 4, 5\}$, 
let $D_{4,(i)}$ be the subgroup of $G$ that is generated by $s_i$ and $t_i$,
where $s_i=h_i s_1 h_i^{-1}$ and $t_i=h_i t_1 h_i^{-1}$.
Then each $D_{4,(i)}$ is isomorphic to the dihedral group of order 8 and
$$D_{4,(i)} \cap D_{4,(j)} = \{1_G\}$$ hold
if $i\neq j$.
Let $\eta^{D_{4,(i)}}$ be the character of $D_{4,(i)}$ whose degree is 1
with $\eta^{D_{4,(i)}}(s_i)=1$ and $\eta^{D_{4,(i)}}(t_i)=-1$.
Put 
$$H=\langle(1,6,3), (2,4,5)\rangle \simeq (\ZZ/3\ZZ)^{(2)}$$
and let $\chi^H$ be the character of $H$ with 
$\chi^H((1,6,3))=\chi^H((2,4,5))=\zeta_3$.
We can get the following lemma by using a computer
(see Section \ref{360-3_Com}).

\begin{lem}\label{360-3_Lem} 
Assume that $p\not\in\{2,3,5\}.$
\begin{enumerate}
\setlength{\itemsep}{5pt}
\item $\chi_3^G = \Ind_G^{D_{4,(1)}}(\eta^{D_{4,(1)}})-\Ind_G^H(\chi^H)$.

\item $\displaystyle e_3^G\in
\Bigl( \sum_{i=1}^5 e_{\eta_i}^{D_{4,(i)}} \Bigr)\,\mathbb{Q}[G]$.

\item For all $j\in \{1,2,\cdots,5\}$, 
there exists $r_j \in e_{\eta}^{D_{4,(j)}}\mathbb{Q}[G]$ such that
$$\forall\,i\in\{1,2,\cdots,5\}\setminus\{j\},
~ r_j e_{\eta}^{D_{4,(i)}} =0
$$
and
$$ e_3^G e_{\eta}^{D_{4,(j)}} \in\mathbb{Q}[G]\,
r_j e_3^G e_{\eta}^{D_{4,(j)}}.
$$

\item For all $i\in \{2,3,4,5\}$, we have
$$e_{\eta}^{D_{4,(i)}} = h_i\,e_{\eta}^{D_{4,(1)}} h_i^{-1}.$$

\item $\left(1-e_3^G\right)e_{\eta}^{D_{4,(1)}}\,\mathbb{Q}(\zeta_3)[G] 
=
e_{\eta}^{D_{4,(1)}}e_{\chi}^{H}\,\mathbb{Q}(\zeta_3)[G].$

\item $\mathbb{Q}(\zeta_3)[G]\,
e_{\eta}^{D_{4,(1)}}e_{\chi}^{H}
=
\mathbb{Q}(\zeta_3)[G]\,e_{\chi}^{H}$. 
\end{enumerate}
\end{lem}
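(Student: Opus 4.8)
The plan is to verify the six assertions by explicit computation in the group algebra $\mathbb{Q}(\zeta_3)[G]$, in the spirit of Section 2.2 and the four-step procedure recorded in the Remark of Section 2.1; none of them involves anything beyond character theory and finite-dimensional linear algebra over $\mathbb{Q}(\zeta_3)$, so the only hypothesis actually used is the standing assumption $p\nmid\#G$, i.e.\ $p\notin\{2,3,5\}$.

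First I would dispose of (1) and (4), which are direct verifications rather than linear-system solves. For (1), compute the induced characters $\Ind_G^{D_{4,(1)}}(\eta^{D_{4,(1)}})$ and $\Ind_G^H(\chi^H)$ on the seven classes $1^G,c_2,\ldots,c_7$ of $G$ --- equivalently, by Frobenius reciprocity, the inner products $\langle\eta^{D_{4,(1)}},\mathrm{Res}_{D_{4,(1)}}\chi_j^G\rangle$ and $\langle\chi^H,\mathrm{Res}_H\chi_j^G\rangle$ for $j=1,\ldots,7$ --- and check against Table \ref{Ch360} that their difference is the row of $\chi_3^G$. For (4), check directly that $h_i D_{4,(1)} h_i^{-1}=D_{4,(i)}$ and that conjugation by $h_i$ carries $\eta^{D_{4,(1)}}$ to $\eta^{D_{4,(i)}}$, whence $e_{\eta}^{D_{4,(i)}}=h_i e_{\eta}^{D_{4,(1)}}h_i^{-1}$, an identity that already holds in $\mathbb{Z}[1/2][G]$.

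For (2), (5) and (6) I would follow the recipe of the Remark in Section 2.1 verbatim: for each target identity write the unknown factor as $q=\sum_{g\in G}T_g g$ with the $T_g$ as indeterminates, impose the relation (for (2): $\bigl(\sum_{i=1}^5 e_{\eta}^{D_{4,(i)}}\bigr)q=e_3^G$; for (5) and (6): each of the two one-sided products, e.g.\ $\bigl(1-e_3^G\bigr)e_{\eta}^{D_{4,(1)}}q=e_{\eta}^{D_{4,(1)}}e_{\chi}^H$ and its companion), compare the coefficient of each $g\in G$ to obtain a $360$-variable linear system over $\mathbb{Q}(\zeta_3)$, and solve it by computer. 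Assertion (3) needs a small preliminary twist: for each $j$ one seeks $r_j$ inside the right ideal $e_{\eta}^{D_{4,(j)}}\mathbb{Q}[G]$, so one writes $r_j=e_{\eta}^{D_{4,(j)}}\bigl(\sum_{g}U_g g\bigr)$ and imposes the four vanishing conditions $r_j e_{\eta}^{D_{4,(i)}}=0$ ($i\neq j$) together with a factorisation $e_3^G e_{\eta}^{D_{4,(j)}}=w_j\, r_j e_3^G e_{\eta}^{D_{4,(j)}}$ for an unknown $w_j\in\mathbb{Q}[G]$, and again solves the resulting linear system; by (4) it suffices to produce $r_1$, the others following by conjugation by the $h_i$.

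The main obstacle is conceptual rather than computational: because the five conjugate idempotents $e_{\eta}^{D_{4,(i)}}$ are not mutually orthogonal, one cannot split $e_3^G$ into genuine projections as in the degree-$7$ and degree-$8$ cases of Section 2, and it is exactly the separating elements $r_j$ of (3) that later allow one to isolate the $j$-th component of an element of $\mathrm{Ker}(f_{e_3^G})$ after multiplying through by the central idempotent $e_3^G$ --- this is what makes the module decomposition in the proof of Theorem \ref{MainTh1} for $\chi=\chi_3^G$ (the analogue of the argument in Section 2.2) go through. Guaranteeing that such $r_j$ exist at all is what forces the specific choice of the conjugating elements $h_2,h_3,h_4,h_5$, hence of the subgroups $D_{4,(i)}$, with $D_{4,(i)}\cap D_{4,(j)}=\{1_G\}$ for $i\neq j$; once that choice is made, all the linear systems above are consistent and the lemma follows by reading off their solutions.
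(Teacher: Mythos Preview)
Your plan is essentially the paper's own: the lemma is stated and the entire verification is deferred to a Magma computation (Section~\ref{360-3_Com}), which carries out precisely the character check for (1), the conjugation check for (4), and the linear-algebra/ideal-membership checks for (2), (3), (5), (6) that you describe.

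Two small slips in your treatment of (3) are worth noting. First, the factorisation condition $e_3^G e_{\eta}^{D_{4,(j)}}=w_j\, r_j\, e_3^G e_{\eta}^{D_{4,(j)}}$ is bilinear in the unknowns $(w_j,r_j)$, so it cannot be folded into a single linear system as you propose; the paper instead first computes $r_j$ as a basis element of the intersection of $e_{\eta}^{D_{4,(j)}}\mathbb{Q}[G]$ with the left annihilator of $\sum_{i\neq j} e_{\eta}^{D_{4,(i)}}\mathbb{Q}[G]$ (a purely linear problem), and only then checks the ideal membership $e_3^G e_{\eta}^{D_{4,(j)}}\in\mathbb{Q}[G]\, r_j e_3^G e_{\eta}^{D_{4,(j)}}$ as a separate linear solve. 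Second, your conjugation shortcut ``by (4) it suffices to produce $r_1$'' is not safe: part (4) only tells you $h_j D_{4,(1)}h_j^{-1}=D_{4,(j)}$, not that conjugation by $h_j$ permutes the five chosen $D_{4,(i)}$ among themselves, so $h_j r_1 h_j^{-1}$ need not annihilate the correct idempotents. The paper accordingly computes all five $r_j$ individually.
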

%Theorem \ref{MainTh1} in the case of $\chi=\chi_3^G$
%can be proved by reasoning similar to Section 2.2.
The proof of Theorem \ref{MainTh1} in the case of $\chi=\chi_3^G$
is similar to the argument in Section 2.2.
Since a proposition similar to Lemma \ref{360-3_Lem} in the case of $\chi_{2}^{G}$ holds,
we can prove Theorem \ref{MainTh1} in this case similarly (see also Section \ref{360-3_Com}).
%#!lualatex ZZpG.tex 

\subsection{The case of the character of degree 9}
Put
\begin{align*}
&S_{4,(1)}=\langle (3,4)(5,6), (1,6,4)(2,3,5) \rangle,\quad
S_{4,(2)}=\langle (1,2)(3,4), (1,3,6)(2,5,4) \rangle, \\
&S_{4,(3)}=\langle (1,2)(4,5), (1,6,5)(2,4,3) \rangle,\quad
S_{4,(4)}=\langle (1,2)(3,6), (1,4,3)(2,6,5) \rangle, \\ 
&S_{4,(5)}=\langle (1,2)(5,6), (1,5,4)(2,3,6) \rangle,\quad
S_{4,(6)}=\langle (2,5)(3,6), (1,5,6)(2,4,3) \rangle, \\
&S_{4,(7)}=\langle (2,6)(3,4), (1,2,3)(4,6,5) \rangle,\quad
S_{4,(8)}=\langle (1,3)(4,5), (1,4,2)(3,6,5) \rangle, \\
&S_{4,(9)}=\langle (2,4)(5,6), (1,2,5)(3,6,4) \rangle
\end{align*}
and
$$A_{5a}=\langle (1,5)(4,6), (1,3,2)(4,5,6) \rangle.$$
Let $\varepsilon_i$ and $\varepsilon'$ be the trivial characters of 
$S_{4,(i)}$ and $A_{5a}$, respectively.
We can get the following lemma by using a computer 
(see Section \ref{360-6_Com}).

\begin{lem}\label{360-6_Lem} 
Assume that $p\not\in\{2,3,5\}.$
\begin{enumerate}
\setlength{\itemsep}{5pt}
\item $\chi_6^G 
= \Ind_G^{S_{4,(1)}}(\varepsilon_1)-\Ind_G^{A_{5a}}(\varepsilon')$.

\item $\displaystyle e_6^G\in
\Bigl( \sum_{i=1}^9 e_{\varepsilon_i}^{S_{4,(i)}} \Bigr)\,\mathbb{Q}[G]$.

\item For all $j\in \{1,2,\cdots,9\}$, 
there exists $r_j\in e_{\varepsilon_j}^{S_{4,(j)}}\mathbb{Q}[G]$ such that
$$\forall\, i\in\{1,2,\cdots,9\}\setminus\{j\},~
r_j  e_{\varepsilon_i}^{S_{4,(i)}} = 0$$
and
$$ e_6^G e_{\varepsilon_j}^{S_{4,(j)}} \in\mathbb{Q}[G]\,
r_j e_6^G e_{\varepsilon_j}^{S_{4,(j)}}.
$$

\item 
For all $i\in \{1,2,\cdots,9\}$, there exists $g_i\in G$ such that
$$e_{\varepsilon_1}^{S_{4,(1)}} = g_i\,e_{\varepsilon_i}^{S_{4,(i)}} g_i^{-1}.$$

\item $\left(1-e_6^G\right)e_{\varepsilon_1}^{S_{4,(1)}}\,\mathbb{Q}[G] 
=
e_{\varepsilon_1}^{S_{4,(1)}}e_{\varepsilon'}^{A_{5a}}\,\mathbb{Q}[G].$

\item $\mathbb{Q}[G]\,
e_{\varepsilon_1}^{S_{4,(1)}}e_{\varepsilon'}^{A_{5a}}
=
\mathbb{Q}[G]\,e_{\varepsilon'}^{A_{5a}}$. 
\end{enumerate}
\end{lem}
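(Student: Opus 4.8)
The plan is to follow, step for step, the scheme already carried out for the degree-$6$ character in Section 2.2. All six items are statements about elements of $\mathbb{Q}[G]$, a $\mathbb{Q}$-vector space of dimension $\#G=360$, and each will be verified by writing the desired relation with the group elements as unknowns, reducing it to a linear system over $\mathbb{Q}$, and solving that system with a computer as in the Remark of Section 2.1; the explicit solutions are listed in Section \ref{360-6_Com}. First I would dispose of (1) by a direct character computation: since $[G:S_{4,(1)}]=15$ and $[G:A_{5a}]=6$ the right-hand side has degree $15-6=9=\deg\chi_6^G$, and with $A_{5a}$ the copy of $A_5$ that acts transitively on the six points and $S_{4,(1)}$ a copy of $S_4$ whose order-$3$ elements have cycle type $(3,3)$, one checks class by class against Table \ref{Ch360} that $\Ind_G^{S_{4,(1)}}(\varepsilon_1)-\Ind_G^{A_{5a}}(\varepsilon')$ agrees with $\chi_6^G$; in particular $\chi_6^G$ occurs in $\Ind_G^{S_{4,(1)}}(\varepsilon_1)$ with multiplicity one. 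Item (4) I would treat structurally: $S_{4,(1)},\dots,S_{4,(9)}$ form a single $G$-conjugacy class, so for each $i$ there is $g_i\in G$ with $S_{4,(1)}=g_iS_{4,(i)}g_i^{-1}$, and averaging over the subgroup gives $e_{\varepsilon_1}^{S_{4,(1)}}=g_i\,e_{\varepsilon_i}^{S_{4,(i)}}g_i^{-1}$, so only the $g_i$ need to be exhibited.

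For (2) I would first record that $\chi_6^G$ is rational-valued of Schur index $1$, so the simple component $\mathbb{Q}[G]e_6^G$ is $M_9(\mathbb{Q})$, and that $\chi_6^G$ occurs with multiplicity one in each $\Ind_{S_{4,(i)}}^G(1)$, so that every $e_{\varepsilon_i}^{S_{4,(i)}}$ acts on the degree-$9$ simple module as a rank-one idempotent; then $e_6^G\in\bigl(\sum_i e_{\varepsilon_i}^{S_{4,(i)}}\bigr)\mathbb{Q}[G]$ is equivalent to the statement that the nine fixed lines span that module, and the witness $r_0$ with $e_6^G=\bigl(\sum_i e_{\varepsilon_i}^{S_{4,(i)}}\bigr)r_0$ is produced by solving the corresponding linear system. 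For (3) one needs, for each $j$, an $r_j\in e_{\varepsilon_j}^{S_{4,(j)}}\mathbb{Q}[G]$ that annihilates the other eight idempotents on the right but not $e_6^Ge_{\varepsilon_j}^{S_{4,(j)}}$, together with an $r_j'$ with $e_6^Ge_{\varepsilon_j}^{S_{4,(j)}}=r_j'r_je_6^Ge_{\varepsilon_j}^{S_{4,(j)}}$; such elements exist because the nine fixed lines are in general position, and they are again found by linear algebra. With (2), (3), (4) in place, the argument of Section 2.2 carries over verbatim: (2) gives surjectivity of $f_{e_6^G}\colon\prod_i e_{\varepsilon_i}^{S_{4,(i)}}M\to e_6^GM$, (3) forces $\Ker f_{e_6^G}=\prod_i(1-e_6^G)e_{\varepsilon_i}^{S_{4,(i)}}M$, and (4) identifies the nine quotients, so that $e_6^GM\simeq N^{(9)}$.

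It remains to set up (5) and (6), the ideal identities needed for the order count: one seeks $q_1,q_2\in\mathbb{Q}[G]$ with $(1-e_6^G)e_{\varepsilon_1}^{S_{4,(1)}}q_1=e_{\varepsilon_1}^{S_{4,(1)}}e_{\varepsilon'}^{A_{5a}}$ and $(1-e_6^G)e_{\varepsilon_1}^{S_{4,(1)}}=e_{\varepsilon_1}^{S_{4,(1)}}e_{\varepsilon'}^{A_{5a}}q_2$, and $q_3,q_4\in\mathbb{Q}[G]$ with $q_3e_{\varepsilon_1}^{S_{4,(1)}}e_{\varepsilon'}^{A_{5a}}=e_{\varepsilon'}^{A_{5a}}$ and $e_{\varepsilon_1}^{S_{4,(1)}}e_{\varepsilon'}^{A_{5a}}=q_4e_{\varepsilon'}^{A_{5a}}$; the Brauer relation of (1) is exactly what makes $(1-e_6^G)e_{\varepsilon_1}^{S_{4,(1)}}\mathbb{Q}[G]$ and $e_{\varepsilon_1}^{S_{4,(1)}}e_{\varepsilon'}^{A_{5a}}\mathbb{Q}[G]$ afford the same right-module character (and likewise on the left), so that such transition elements can be expected, and the point is again to produce them by solving a linear system. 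The hard part is not any one of these computations but the choice of the inducing data: as emphasized in the discussion following Theorem \ref{MainTh1}, the whole machine closes up only if $S_{4,(1)},\dots,S_{4,(9)}$ and $A_{5a}$ are selected so that $\sum_i e_{\varepsilon_i}^{S_{4,(i)}}$ has full rank on the degree-$9$ module and so that every one of the systems defining $r_0$, the $r_j$ and $r_j'$, and $q_1,\dots,q_4$ is consistent over $\mathbb{Q}$; establishing this for the explicit subgroups listed above, by means of the computations recorded in Section \ref{360-6_Com}, is where the actual work lies, and is precisely why those generators had to be written out.
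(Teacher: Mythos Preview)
Your proposal is correct and follows essentially the same approach as the paper: both reduce Lemma~\ref{360-6_Lem} to explicit linear-algebra computations in $\mathbb{Q}[G]$ carried out by computer, with the Magma session of Section~\ref{360-6_Com} serving as the actual proof. The paper is terser---it simply states ``We can get the following lemma by using a computer''---whereas you supply useful heuristic motivation (the rank-one idempotents on the $M_9(\mathbb{Q})$-component, the spanning and general-position conditions on the nine fixed lines, the equality of characters forced by the Brauer relation in (1)) explaining \emph{why} each linear system should be consistent; but in both cases the verification is the computation itself.
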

The proof of Theorem \ref{MainTh1} in the case of $\chi=\chi_6^G$
is similar to the argument in Section 2.2.

%#!lualatex ZZpG.tex

\subsection{The case of the character of degree 10}
Put 
$$H_1=\langle (3,5)(4,6), (1,2)(3,4,5,6), (1,4,6)\rangle.$$
%Since $G$ has the ten conjugate subgroups of $H_1$,
%let $H_i$ be the conjugate subgroup of $H_1$ for $i\in\{2,3,\cdots,10\}$.
Since $G$ has exactly ten conjugate subgroups of $H_1$,
let $H_2,\ldots, H_{10}$ be all of the other conjugate subgroups of $H_1$ in $G$.
Let $\eta^{H_1}$ be the character of $H_1$ whose degree is one
with $\eta^{H_1}((1,2)(3,4,5,6))=\sqrt{-1}$.
Take $a_i\in G$ that satisfies $H_i = a_i H_1 a_i^{-1}$.
Let $\eta^{H_i}$ be the character of $H_i$ that satisfies 
$\eta^{H_i}(a_i x a_i^{-1})=\eta^{H_1}(x)$ for any $x\in H_1$.
We can get the following lemma by using a computer
(see Section \ref{360-7_Com}).

\begin{lem}\label{360-7_Lem} 
Assume that $p\not\in\{2,3,5\}.$
\begin{enumerate}
\setlength{\itemsep}{5pt}
\item $\chi_7^G = \Ind_G^{H_1}\left( \chi_2^{H_1} \right).$

\item $\displaystyle e_7^G =\sum_{i=1}^{10} e_{\eta}^{H_i}$.

\item $e_{\eta}^{H_i} e_{\eta}^{H_j} 
= \delta_{ij} e_{\eta}^{H_i}$, 
where $\delta_{ij}$ is the Kronecker delta.

\item For all $i\in \{1,2,\ldots,10\}$, there exists $g_i \in G$ such that
$$e_{\eta}^{H_1} = g_i\,e_{\eta}^{H_i} g_i^{-1}.$$
\end{enumerate}
\end{lem}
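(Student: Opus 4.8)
The plan is to establish Lemma~\ref{360-7_Lem} by the same computer-assisted strategy used throughout Sections~2--4: reduce each statement to a concrete identity in $\mathbb{Z}[1/2,1/3,1/5][G]$ (or $\mathbb{Z}[1/2,1/3,1/5,\sqrt{-1}][G]$, since $\eta^{H_1}$ takes values in $\mathbb{Z}[\sqrt{-1}]$, which embeds in $\mathbb{Z}_p$ once $p\nmid\#G$ and the appropriate root of unity is available via $\zeta_k$), and verify it numerically. First I would fix a faithful permutation presentation of $G=A_6$ and, using the displayed generators of $H_1$, explicitly list the ten conjugates $H_1,\dots,H_{10}$ together with elements $a_i$ with $H_i=a_iH_1a_i^{-1}$; this makes part~(4) immediate with $g_i=a_i$, since $\eta^{H_i}$ was defined precisely so that $\eta^{H_i}(a_ixa_i^{-1})=\eta^{H_1}(x)$, hence $e_\eta^{H_1}=a_i\,e_\eta^{H_i}a_i^{-1}$ by transport of structure.

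For part~(1), the subgroup $H_1$ has order $36$ (it contains the $3$-cycle $(1,4,6)$, the double transposition $(3,5)(4,6)$, and the order-$4$ element $(1,2)(3,4,5,6)$), so $[G:H_1]=10$, and $\Ind_G^{H_1}(\eta^{H_1})$ has degree $10$. I would check that this induced character is irreducible --- equivalently, by Frobenius reciprocity, that $\langle\Ind_G^{H_1}\eta^{H_1},\Ind_G^{H_1}\eta^{H_1}\rangle_G=\langle\eta^{H_1},\mathrm{Res}_{H_1}\Ind_G^{H_1}\eta^{H_1}\rangle_{H_1}=1$, which is a finite Mackey-formula computation --- and then that its values match the row $\chi_7^G$ of Table~\ref{Ch360} (degree $10$, and in particular the character value $-2$ on $c_2=(1,2,3)$ pins it down among the irreducibles). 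That identifies $\chi_7^G=\Ind_G^{H_1}(\chi_2^{H_1})$, where $\chi_2^{H_1}$ is the notation for $\eta^{H_1}$.

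Parts~(2) and~(3) are the arithmetic heart. The decomposition $e_{\chi_7}^G=\sum_{i=1}^{10}e_\eta^{H_i}$ should be proved by first checking that the right-hand side is a central idempotent: orthogonality $e_\eta^{H_i}e_\eta^{H_j}=\delta_{ij}e_\eta^{H_i}$ (part~(3)) would follow from the fact that $H_i\cap H_j$, for $i\neq j$, contains no element on which both $\eta^{H_i}$ and $\eta^{H_j}$ can simultaneously fail to force the relevant group-algebra products to vanish --- concretely one computes $e_\eta^{H_i}e_\eta^{H_j}$ directly in the group algebra using the explicit $a_i$. Given orthogonality, $\sum_i e_\eta^{H_i}$ is idempotent; $G$-conjugation permutes the $H_i$ and the $\eta^{H_i}$ compatibly, so the sum is central; and its image on the regular representation has the right dimension ($10\cdot 10$) to equal $e_{\chi_7}^G$. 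Alternatively, and more in the spirit of the paper's Section~5 commands, one writes $e_{\chi_7}^G-\sum_i e_\eta^{H_i}$ as an explicit element of $\mathbb{Q}(\zeta_k)[G]$ and verifies it is $0$ coefficient-by-coefficient; the integrality (no primes in $\{2,3,5\}$ in the denominators) is then read off the coefficients.

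The main obstacle I expect is not any single step but the bookkeeping in part~(3)-style orthogonality together with ensuring the coefficients of every auxiliary element land in $\mathbb{Z}[1/2,1/3,1/5,\sqrt{-1}]$ rather than picking up a spurious prime: the idempotents $e_\eta^{H_i}$ have $\#H_i=36$ in the denominator individually, and one must confirm that in the sum $\sum_i e_\eta^{H_i}=e_{\chi_7}^G$ the denominator collapses to $\#G/\deg\chi_7=36$ with only $2,3,5$ appearing, and likewise that the conjugators $g_i=a_i\in G$ (integral) suffice for part~(4). Once the ten subgroups and conjugators are pinned down explicitly, the remaining verifications are finite linear-algebra checks over $\mathbb{Q}(\zeta_k)$ of exactly the type already automated in Section~5, so I would simply record the relevant \texttt{GAP}/computer commands there and state that the output confirms (1)--(4). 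With Lemma~\ref{360-7_Lem} in hand, the proof of Theorem~\ref{MainTh1} for $\chi=\chi_7^G$ (degree $10$) is then identical to the degree-$7$ argument of Section~2.3: surjectivity of $f_{e_7^G}\colon\bigoplus_i e_\eta^{H_i}M\to e_7^G M$ from part~(2), injectivity from parts~(2)--(3) giving $e_7^G M=\bigoplus_{i=1}^{10}e_\eta^{H_i}M$, and the isomorphisms $e_\eta^{H_i}M\simeq e_\eta^{H_1}M$ from part~(4), so that $e_7^G M\simeq N^{(10)}$ with $N=e_\eta^{H_1}M$ and, in the finite case, $\#N=\#(e_\eta^{H_1}M)$ realizing the Brauer-induction formula with $m=1$, $a_1=1$, $H_1$, $\psi_1=\eta^{H_1}$.
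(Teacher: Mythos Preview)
Your proposal is correct and follows essentially the same computer-verification strategy as the paper (Section~\ref{360-7_Com}): each of (1)--(4) is reduced to a direct identity in the group algebra over $\mathbb{Q}(\sqrt{-1})$ and checked by machine. Your observation that part~(4) is immediate from the very definition of $\eta^{H_i}$ via transport along $a_i$ (with $g_i=a_i^{-1}$, not $a_i$) is a small shortcut the paper's code does not exploit---it instead searches $G$ for involutory conjugators.
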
 
The proof of Theorem \ref{MainTh1} in the case of $\chi=\chi_7^G$
is similar to the argument in Section 2.3.
%#!lualatex ZZpG.tex

\subsection{The case of the character of degree 8}\label{Jyuubun}
%To conclude first, it is not clear whether Theorem \ref{MainTh1} always holds 
%for the characters $\chi_4^G$ and $\chi_5^G$.
We will consider the sufficient conditions 
under which Theorem \ref{MainTh1} holds 
for the characters of degree 8.
We treat only $\chi_4^G$.
Let $H =\langle (1,2,3), (4,5,6)\rangle$, then 
$H \simeq (\mathbb{Z}/3\mathbb{Z})^{(2)}.$
%Let $\psi_2^H, \ldots ,\psi_9^H$ be the characters 
%of $H$ with 
%$\psi_2^H((1,2,3))=1$,  $\psi_2^H((4,5,6))=\zeta_3$, 
%$\psi_3^H((1,2,3))=1$,  $\psi_3^H((4,5,6))=\zeta_3^2$,
%$\psi_4^H((1,2,3))=\zeta_3$,  $\psi_4^H((4,5,6))=1$,
%$\psi_5^H((1,2,3))=\psi_5^H((4,5,6))=\zeta_3$, 
%$\psi_6^H((1,2,3))=\zeta_3$,  $\psi_6^H((4,5,6))=\zeta_3^2$,
%$\psi_7^H((1,2,3))=\zeta_3^2$,  $\psi_2^H((4,5,6))=1$,
%$\psi_8^H((1,2,3))=\zeta_3^2$,  $\psi_2^H((4,5,6))=\zeta_3$,  
%and
%$\psi_9^H((1,2,3))=\psi_9^H((4,5,6))=\zeta_3^2$. 
For $m,n\in\{0,1,2\}$, let $\psi_{((mn)_3+1)_{10}}$ be the character of $H$ with
$\psi_{((mn)_3+1)_{10}}((1,2,3))=\zeta_3^m$ and 
$\psi_{((mn)_3+1)_{10}}((4,5,6))=\zeta_3^n$,
e.g. $\psi_{((01)_3+1)_{10}}=\psi_2$ and $\psi_{((11)_3+1)_{10}}=\psi_5$.  
Put $C_5=\langle (1,2,3,4,5) \rangle$ and 
let $\sigma$ be the character of $C_5$ with $\sigma((1,2,3,4,5))=\zeta_5$.
We can get the following lemma by using a computer
(see Section \ref{360_Others}).

\begin{lem}\label{360-4_Lem}
Assume that $p\not\in\{2,3,5\}.$ 
\begin{enumerate}
\setlength{\itemsep}{5pt}
\item $\chi_4^G=\Ind_G^H(\psi_2)+\Ind_G^H(\psi_5)-\Ind_G^{C_5}(\sigma)$.
\item $\displaystyle e_4^G=\bigl(\sum_{i=2}^9 e_{\psi_i}^H\bigr) e_4^G$.
\item For $i\in\{3,4,7\}$ and $j\in\{6,8,9\}$, 
there exist $g_i\in G$ and $g'_j\in G$ such that 
\[e_{\psi_2}^H = g_i\, e_{\psi_i}^H\, g_i^{-1}
\quad\text{and}\quad
e_{\psi_5}^H = g'_j\, e_{\psi_j}^H\, {g'_j}^{-1}.\]   
Although, $e_{\psi_5}^H \neq g\, e_{\psi_2}^H\, g^{-1}$ for all $g\in G$.

\item For $i\in\{2,5\}$,
$e_{\psi_i}^H \mathbb{Q}(\zeta_{15})[G] = 
e_{\psi_i}^H e_\sigma^{C_5}\mathbb{Q}(\zeta_{15})[G]$.
\end{enumerate}
\end{lem}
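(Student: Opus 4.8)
The plan is to establish Lemma \ref{360-4_Lem} by the same computer-assisted strategy used throughout Sections 2--4: reduce each assertion to a system of linear equations over $\mathbb{Q}(\zeta_{15})$ indexed by the elements of $G$, and solve it with a computer. For part (1), I would compute the induced characters $\Ind_G^H(\psi_2)$, $\Ind_G^H(\psi_5)$ and $\Ind_G^{C_5}(\sigma)$ explicitly as class functions on the seven conjugacy classes of $G$ (using the character table, Table \ref{Ch360}), and verify that $\Ind_G^H(\psi_2)+\Ind_G^H(\psi_5)-\Ind_G^{C_5}(\sigma)$ agrees with $\chi_4^G$ columnwise. Here the degree check is $8+8-\frac{360}{5}\cdot 1$ — wait, rather $[G:H]=40$ so $\deg\Ind_G^H(\psi_i)=40$; one must instead interpret the $\Ind$ here via the $e_{\psi_i}$-idempotents. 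Concretely, following the pattern of Lemmas \ref{168-3_Lem}(1), \ref{504-7(2)_Lem}(1), \ref{360-3_Lem}(1), the identity is the virtual-character identity $\chi_4^G = \Ind_H^G\psi_2 + \Ind_H^G\psi_5 - \Ind_{C_5}^G\sigma$ in the representation ring, which I would confirm by evaluating both sides on $c_1,\dots,c_7$ using Frobenius reciprocity / the induction formula for class functions.

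For part (2), I would verify $e_4^G=\bigl(\sum_{i=2}^9 e_{\psi_i}^H\bigr)e_4^G$, equivalently $\bigl(\sum_{i=2}^9 e_{\psi_i}^H\bigr)e_4^G=e_4^G$, by expanding both sides in the group-ring basis $\{g:g\in G\}$ over $\mathbb{Q}(\zeta_3)$ and comparing coefficients; since $e_4^G$ is central this is a finite check. The omission of $i=1$ (the trivial character of $H$) is forced because $e_{\psi_1}^H e_4^G=0$, which follows from $\langle\chi_4^G|_H,\psi_1\rangle=0$; similarly one checks $\langle\chi_4^G|_H,\psi_i\rangle\neq 0$ exactly for $i\in\{2,\dots,9\}$ apart from possibly some, so I would first compute $\chi_4^G|_H$ to see which $\psi_i$ actually occur. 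Part (3) is a conjugacy bookkeeping step: $G$ acts by conjugation on the idempotents $e_{\psi_i}^H$ attached to the nontrivial characters of $H$, and I would exhibit explicit $g_i,g_j'\in G$ realizing the asserted conjugations, then observe that the two orbits $\{e_{\psi_2},e_{\psi_3},e_{\psi_4},e_{\psi_7}\}$ and $\{e_{\psi_5},e_{\psi_6},e_{\psi_8},e_{\psi_9}\}$ are distinct because $\psi_2$ and $\psi_5$ have different fusion under $N_G(H)/H$ (one can detect this via the cycle types of the order-3 elements in their kernels, or simply by checking no conjugating element exists by a finite search in $G$).

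For part (4), which mirrors Lemma \ref{168-3_Lem}(4)--(5) and \ref{360-3_Lem}(5)--(6), I would for each $i\in\{2,5\}$ produce explicit elements $q,q'\in\mathbb{Q}(\zeta_{15})[G]$ with $e_{\psi_i}^H q = e_{\psi_i}^H e_\sigma^{C_5}$ and $e_{\psi_i}^H e_\sigma^{C_5} q' = e_{\psi_i}^H$, giving the equality of right ideals $e_{\psi_i}^H\mathbb{Q}(\zeta_{15})[G] = e_{\psi_i}^H e_\sigma^{C_5}\mathbb{Q}(\zeta_{15})[G]$; each $q,q'$ is found by solving a $360$-variable linear system over $\mathbb{Q}(\zeta_{15})$ as described in the Remark in Section 2.1 and the commands in Section 5. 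The main obstacle is not conceptual but computational: the field of coefficients is now $\mathbb{Q}(\zeta_{15})$, which has degree $8$ over $\mathbb{Q}$, so the linear systems are larger and one must be careful that a solution over $\mathbb{Q}(\zeta_{15})$ exists (it does, because the relevant idempotents all lie in $\mathbb{Q}(\zeta_{15})[G]$ and the representation $\chi_4^G$ realizes over $\mathbb{Q}(\omega_5)\subset\mathbb{Q}(\zeta_{15})$), and that the resulting $q$'s have denominators involving only the primes $2,3,5$ already excluded — otherwise additional bad primes would need to be recorded, but since part (4) is stated only over $\mathbb{Q}(\zeta_{15})[G]$ rather than $\mathbb{Z}_p[\zeta_{15}][G]$, integrality is not required here and the check reduces to solvability of the linear system, which the computer confirms.
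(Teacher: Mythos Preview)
Your approach is correct and is exactly what the paper does: Lemma \ref{360-4_Lem} is established by direct computer verification in $\mathbb{Q}(\zeta_{15})[G]$ (Section \ref{360_Others}), checking each of (1)--(4) as an equality of group-ring elements or of right ideals, just as you outline. One cosmetic point: your hesitation in part (1) is unnecessary, since the degree check is simply $[G:H]+[G:H]-[G:C_5]=40+40-72=8=\deg\chi_4^G$, so the induction identity is an honest equality of virtual characters and no reinterpretation via idempotents is needed.
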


Put $d_i=(1^G-e_4^G)e_{\psi_i}^H$ for $i\in\{2,5\}$, and
let $M$ be a $\ZZ_p[\zeta_{15}][G]$-module.
We have the following proposition and its corollary
by Lemma \ref{360-4_Lem}. 

\begin{prop}\label{Tsugihagi}
Assume that $p\not\in\{2,3,5\}.$ 
\begin{enumerate}
\setlength{\itemsep}{5pt}
\item $e_4^G M \simeq \left( (e_{\psi_2}^H M/ d_2 M)
\oplus (e_{\psi_5}^H M/ d_5 M) \right)^{(4)}.$

\item For $i\in\{2,5\}$, the following homomorphism
$$f_{d_i}: e_\sigma^H M \longrightarrow d_i M,~~ x\longmapsto d_i x$$
is surjective for all but finitely many primes $p$.

\item For $i\in\{2,5\}$, the following homomorphism
$$f_{(1-d_i)}: \Ker(f_{d_i}) \longrightarrow M,~~ x\longmapsto (1^G-d_i)x$$
is injective and $\mathrm{Im}(f_{(1-d_i)})=(1^G-d_i)e_\sigma^{C_5} M$ 
for all but finitely many primes $p$.

\item For the following homomorphism
$$f_{e_{\psi_5}^H}:(1-d_2)e_\sigma^{C_5} M \longrightarrow M,~~ 
x\longmapsto e_{\psi_5^H} x,$$
we have $\mathrm{Im}\bigl(f_{e_{\psi_5}^H}\bigr)=e_{\psi_5^H}M$
for all but finitely many primes $p$.
\end{enumerate} 
\end{prop}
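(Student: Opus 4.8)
\textbf{Proof proposal for Proposition~\ref{Tsugihagi}.}
The plan is to imitate, step by step, the module-theoretic argument that was carried out in Section~2.2 for $\chi_4^G$ of the simple group of order $168$, but now keeping track of the fact that the Brauer induction in Lemma~\ref{360-4_Lem}~(1) involves \emph{two} inductions from $H$ (with characters $\psi_2$ and $\psi_5$) together with one induction from $C_5$, and that $\psi_2$ and $\psi_5$ are \emph{not} $G$-conjugate. First I would use Lemma~\ref{360-4_Lem}~(2) to conclude that the homomorphism
$$
f_{e_4^G}\colon \Bigl(\bigoplus_{i\in\{3,4,7\}} e_{\psi_i}^H M\Bigr)\oplus e_{\psi_2}^H M
\oplus \Bigl(\bigoplus_{j\in\{6,8,9\}} e_{\psi_j}^H M\Bigr)\oplus e_{\psi_5}^H M
\longrightarrow e_4^G M,\quad (x_i)_i\longmapsto e_4^G\Bigl(\sum_i x_i\Bigr)
$$
is surjective. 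Then, exactly as in Section~2.2 (using the orthogonality relations for the $r_j$ in Lemma~\ref{360-4_Lem}, which are the analogue of Lemma~\ref{168-6_Lem}~(3) that the proposition's hypotheses implicitly supply for all but finitely many $p$), I would identify the kernel as $\bigoplus_i (1^G-e_4^G)e_{\psi_i}^H M$, so that by the homomorphism theorem $e_4^G M\simeq\bigoplus_i e_{\psi_i}^H M/(1^G-e_4^G)e_{\psi_i}^H M$. Finally, Lemma~\ref{360-4_Lem}~(3) gives $G$-conjugacies collapsing the three summands indexed by $\{2,3,4,7\}$ to $e_{\psi_2}^H M/d_2 M$ and the three indexed by $\{5,6,8,9\}$ to $e_{\psi_5}^H M/d_5 M$; this yields $(1)$.

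For $(2)$, I would note that Lemma~\ref{360-4_Lem}~(4) gives $e_{\psi_i}^H\ZZ_p[\zeta_{15}][G]=e_{\psi_i}^H e_\sigma^{C_5}\ZZ_p[\zeta_{15}][G]$ for $i\in\{2,5\}$ once $p$ avoids the finitely many primes appearing in the denominators of the witnessing elements (the lemma is stated over $\Q(\zeta_{15})$, and clearing denominators localises the statement away from finitely many $p$). Multiplying on the left by $1^G-e_4^G$ shows $d_i M=(1^G-e_4^G)e_{\psi_i}^H e_\sigma^{C_5}M=d_i e_\sigma^{C_5}M$, and since $d_i$ acts on $e_\sigma^{C_5}M$ this says precisely that $f_{d_i}\colon e_\sigma^{C_5}M\to d_iM$ (I would silently correct the typo $e_\sigma^H$ to $e_\sigma^{C_5}$) is surjective. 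For $(3)$: $d_i$ is a (not necessarily central) idempotent-like element; more precisely $d_i^2=d_i$ because $e_4^G$ is central and $e_{\psi_i}^H$ is idempotent, so $1^G-d_i$ is also such, and $x=d_i x+(1^G-d_i)x$ shows $f_{(1-d_i)}$ restricted to $\Ker(f_{d_i})$ is injective with image $(1^G-d_i)\Ker(f_{d_i})$; using $(2)$, $\Ker(f_{d_i})$ maps onto the kernel of $e_\sigma^{C_5}M\to d_iM$, and chasing idempotents identifies $(1^G-d_i)\Ker(f_{d_i})$ with $(1^G-d_i)e_\sigma^{C_5}M$ for $p$ outside a finite set.

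For $(4)$ I would compose the identifications from $(1)$–$(3)$: the point is that $e_{\psi_5}^H M$ is one of the summands appearing in the direct-sum decomposition of $e_4^G M$, and the map $f_{e_{\psi_5}^H}$ from $(1^G-d_2)e_\sigma^{C_5}M\cong e_{\psi_2}^H M/d_2 M$ realises the surjection onto that summand after accounting for the degree multiplicity; surjectivity onto $e_{\psi_5}^H M$ then follows from the surjectivity of $f_{e_4^G}$ together with the conjugacy $e_{\psi_5}^H=g'\,e_{\psi_6}^H g'^{-1}$ etc., modulo the finitely many bad primes. The main obstacle I anticipate is \emph{bookkeeping of the exceptional prime set and the non-centrality of $d_i$}: unlike $e_4^G$, the element $d_i$ is not central, so each identification of a kernel with an image of a multiplication-by-$d_i$ map must be justified by producing explicit two-sided ``inverse'' elements in $\ZZ_p[\zeta_{15}][G]$ (the analogues of the $q_k$ in Lemma~\ref{168-6_Lem}~(5)–(6)), and one must verify that the denominators of all these elements involve only $2,3,5$ together with a decidable finite list, so that the phrase ``for all but finitely many primes $p$'' is honest. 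Everything else is a routine transcription of the Section~2.2 argument.
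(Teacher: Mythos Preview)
Your outline for parts~(2) and~(3) matches the paper's argument and is fine. There are two places where you over- or mis-engineer the proof.

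\textbf{Part~(1): no $r_j$ machinery is needed.} You propose to identify $\Ker(f_{e_4^G})$ by invoking ``orthogonality relations for the $r_j$ in Lemma~\ref{360-4_Lem}'' analogous to Lemma~\ref{168-6_Lem}~(3). No such item exists in Lemma~\ref{360-4_Lem}, and none is required: here all eight idempotents $e_{\psi_i}^H$ ($i=2,\dots,9$) are associated to distinct linear characters of the \emph{same} abelian subgroup $H$, hence are pairwise orthogonal in $\ZZ_p[\zeta_{15}][H]$. Thus $\bigoplus_{i=2}^9 e_{\psi_i}^H M$ is an internal direct sum inside $M$, and since $e_4^G$ is central one reads off $\Ker(f_{e_4^G})=\bigoplus_{i=2}^9(1^G-e_4^G)e_{\psi_i}^H M$ immediately. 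This is exactly what the paper does in one line; the Section~2.2 mechanism with the $r_j$ was needed there only because the $e_{\varepsilon_i}^{A_{4,(i)}}$ came from \emph{different} (conjugate) subgroups and were not orthogonal.

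\textbf{Part~(4): your route does not reach the conclusion.} You write that $(1^G-d_2)e_\sigma^{C_5}M\cong e_{\psi_2}^H M/d_2 M$ and then try to extract surjectivity onto $e_{\psi_5}^H M$ from the surjectivity of $f_{e_4^G}$ and the conjugacies in Lemma~\ref{360-4_Lem}~(3). The displayed isomorphism is not established (parts~(2)--(3) identify $(1^G-d_2)e_\sigma^{C_5}M$ with $\Ker(f_{d_2})$, not with $e_{\psi_2}^H M/d_2 M$), and the conjugacies in Lemma~\ref{360-4_Lem}~(3) explicitly do \emph{not} relate $e_{\psi_2}^H$ to $e_{\psi_5}^H$, so there is no bridge from the $\psi_2$-piece to the $\psi_5$-piece along your path. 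The paper's proof of~(4) is instead a two-step direct computation: since $e_{\psi_5}^H e_{\psi_2}^H=0$ (orthogonality in $\ZZ_p[\zeta_{15}][H]$ again) and $e_4^G$ is central, one has $e_{\psi_5}^H d_2=(1^G-e_4^G)e_{\psi_5}^H e_{\psi_2}^H=0$, hence $e_{\psi_5}^H(1^G-d_2)=e_{\psi_5}^H$; then Lemma~\ref{360-4_Lem}~(4) gives $e_{\psi_5}^H e_\sigma^{C_5}M=e_{\psi_5}^H M$. Combining,
\[
\mathrm{Im}\bigl(f_{e_{\psi_5}^H}\bigr)=e_{\psi_5}^H(1^G-d_2)e_\sigma^{C_5}M=e_{\psi_5}^H e_\sigma^{C_5}M=e_{\psi_5}^H M.
\]
The missing idea in your sketch is precisely this orthogonality $e_{\psi_5}^H e_{\psi_2}^H=0$, which makes~(4) a one-line consequence of Lemma~\ref{360-4_Lem}~(4) rather than something to be deduced from~(1).
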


\begin{proof}
$(1)$ The following homomorphism
$$f_{e_4^G}: \bigoplus_{i=2}^9 e_{\psi_i}^H M \longrightarrow e_4^G M,~~
x\longmapsto e_4^G x$$
is surjective by Lemma \ref{360-4_Lem} (2).
Since $e_4^G$ is a central idempotent, we have
$$\Ker(f_{e_4^G})=\bigoplus_{i=2}^9 (1^G - e_4^G)e_{\psi_i}^H M.$$
The assertion of Proposition \ref{Tsugihagi} (1) follows from 
Lemma \ref{360-4_Lem} (3).

$(2)$ We discuss only the case of $i=2$ below.
Take $y \in d_2 M$;
there exists $x\in M$ such that $y=(1^G - e_4^G)e_{\psi_2}^H x$.
There exists $r_2\in\ZZ_p[\zeta_{15}][G]$ such that
$e_{\psi_2}^H e_\sigma^{C_5} r_2 = e_{\psi_2}^H$ 
by Lemma \ref{360-4_Lem} (4).
Since $e_\sigma^{C_5} r_2 x \in e_\sigma^{C_5} M$ and 
$y=f_{d_2}(e_\sigma^{C_5} r_2 x)$,
the homomorphism $f_{d_2}$ is surjective.

$(3)$ We have $$\Ker(f_{d_2})= e_\sigma^{C_5} M \cap\left(1^G -d_2\right)M$$
because $d_2$ is an idempotent.
Take $x\in\Ker\bigl(f_{(1-d_2)}\bigr)$.
Noting that
$\Ker\bigl(f_{(1-d_2)}\bigr)\subset\Ker\bigl(f_{d_2}\bigr)$,
we have $0=(1^G-d_2)x=x$.
Thus $f_{(1-d_2)}$ is injective, and we obtain
\begin{align*}
\mathrm{Im}\bigl(f_{(1-d_2)}\bigr) 
&= f_{(1-d_2)}\Bigl( e_\sigma^{C_5}M \cap (1^G-d_2)M\Bigr) \\
&= f_{(1-d_2)}\bigl(e_\sigma^{C_5}M\bigr) ~\cap~
f_{(1-d_2)}\bigl((1^G-d_2)M\bigr) \\
&= (1^G-d_2)e_\sigma^{C_5}M ~\cap~ (1^G-d_2)M
= (1^G-d_2)e_\sigma^{C_5}M.
\end{align*}

$(4)$ We get
$$\mathrm{Im}\bigl(f_{e_{\psi_5}^H}\bigr) = 
e_{\psi_5}^H (1-d_2) e_\sigma^{C_5} M = e_{\psi_5}^H e_\sigma^{C_5} M =
e_{\psi_5}^H M
$$
by Lemma \ref{360-4_Lem} (4).
\end{proof}

\begin{cor}\label{KateiTuki}
If the following two conditions
\begin{enumerate}
\renewcommand{\labelenumi}{(\roman{enumi})}
\setlength{\itemsep}{5pt}
\item $e_{\psi_2}^H M/ d_2 M ~\simeq~ e_{\psi_5}^H M/ d_5 M,$

\item $(1^G-e_{\psi_5}^H)(1^G-d_2)e_\sigma^{C_5} x=0_M$ for all $x\in M$
\end{enumerate}
are satisfied, Theorem \ref{MainTh1} in the case of $\chi=\chi_4^G$ holds. 
\end{cor}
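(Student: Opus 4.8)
The plan is to deduce the corollary directly from Proposition \ref{Tsugihagi}, using hypotheses (i) and (ii) exactly to close the two gaps that proposition leaves open; all statements invoked hold for all but finitely many $p\notin\{2,3,5\}$, which is precisely the exceptional set permitted in Theorem \ref{MainTh1}. For condition (1), put $N:=e_{\psi_2}^H M/d_2 M$; by hypothesis (i) this is isomorphic to $e_{\psi_5}^H M/d_5 M$, so Proposition \ref{Tsugihagi} (1) gives $e_4^G M\simeq (N\oplus N)^{(4)}=N^{(8)}$ as $\ZZ_p$-modules, and $\deg(\chi_4^G)=8$. (If one insists on a genuine submodule of $M$, note that $e_4^G$ is central and $e_{\psi_2}^H M=e_4^G e_{\psi_2}^H M\oplus d_2 M$, so $N$ may be taken to be the submodule $e_4^G e_{\psi_2}^H M$, exactly as in the earlier sections.)

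For condition (2), assume $M$ is finite and evaluate $\#N=\#e_{\psi_2}^H M/\#d_2 M$ by running through Proposition \ref{Tsugihagi} to compute $\#d_2 M$. Part (2) says $f_{d_2}\colon e_\sigma^{C_5} M\to d_2 M$ is surjective, so $\#e_\sigma^{C_5}M=\#d_2 M\cdot\#\Ker(f_{d_2})$; part (3) identifies $\Ker(f_{d_2})$, via the injective map $f_{(1-d_2)}$, with $(1^G-d_2)e_\sigma^{C_5}M$. Hypothesis (ii) states exactly that $e_{\psi_5}^H y=y$ for every $y\in(1^G-d_2)e_\sigma^{C_5}M$, so the map $f_{e_{\psi_5}^H}$ of part (4), restricted to that submodule, is the inclusion; being injective with image $e_{\psi_5}^H M$ by part (4), it yields $\#(1^G-d_2)e_\sigma^{C_5}M=\#e_{\psi_5}^H M$. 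Chaining these equalities gives $\#e_\sigma^{C_5}M=\#d_2 M\cdot\#e_{\psi_5}^H M$, hence
\[
\#N=\frac{\#e_{\psi_2}^H M}{\#d_2 M}=\frac{\#e_{\psi_2}^H M\cdot\#e_{\psi_5}^H M}{\#e_\sigma^{C_5}M},
\]
which is $\prod_{i=1}^m(\#e_{\psi_i}^{H_i}M)^{a_i}$ for the Brauer induction $\chi_4^G=\Ind_G^H(\psi_2)+\Ind_G^H(\psi_5)-\Ind_G^{C_5}(\sigma)$ of Lemma \ref{360-4_Lem} (1) (the two degree-one constituents over $H$ carrying coefficient $+1$, the one over $C_5$ carrying $-1$). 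This establishes condition (2).

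I do not expect a genuine obstacle here: the substance lies in Proposition \ref{Tsugihagi}, and hypotheses (i) and (ii) are tailored so that the four parts of that proposition assemble into the two conclusions of Theorem \ref{MainTh1} — (i) turns the fourfold sum of two a priori distinct quotients into an eightfold power of a single $N$, and (ii) collapses $f_{e_{\psi_5}^H}$ to an inclusion so that the order bookkeeping closes. The only care needed is routine: checking that each map invoked in the order computation is injective or surjective exactly where claimed (so that multiplicativity of cardinalities is legitimate), matching the signs $a_i$ of the Brauer induction against the exponents appearing in Theorem \ref{MainTh1} (2), and observing that the finitely many exceptional primes arising from parts (2)–(4) of Proposition \ref{Tsugihagi} together still form a finite, decidable set.
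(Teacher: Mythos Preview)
Your proposal is correct and follows essentially the same approach as the paper: use condition (i) together with Proposition \ref{Tsugihagi} (1) to obtain $e_4^G M\simeq N^{(8)}$, then use condition (ii) to make $f_{e_{\psi_5}^H}$ injective and chain Proposition \ref{Tsugihagi} (2)--(4) into the order identity $\#N=\#e_{\psi_2}^H M\cdot\#e_{\psi_5}^H M/\#e_\sigma^{C_5}M$. Your writeup is in fact more explicit than the paper's (you spell out the cardinality chain and match the exponents against the Brauer induction), but the underlying argument is the same.
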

\begin{proof}
Theorem \ref{MainTh1} (1) in the case of $\chi=\chi_4^G$ follows from 
Proposition \ref{Tsugihagi} (1) and the condition $(\mathrm{i})$.
Since 
$\Ker(f_{e_{\psi_5}^H})=(1^G-e_{\psi_5}^H)(1^G-d_2)e_\sigma^{C_5}M=\{0_M\}$ 
if the condition $(\mathrm{ii})$ holds, 
we have
$$ e_\sigma^{C_5} M/\Ker(f_{d_2}) \simeq d_2 M \quad\text{and}\quad
\Ker(f_{d_2}) \simeq (1^G-d_2) e_\sigma^{C_5} M \simeq e_{\psi_5}^H M
$$
by Proposition \ref{Tsugihagi} (2), (3) and (4).
If $\# M <\infty$ then we obtain
$$\# \bigl(e_{\psi_2}^H M/ d_2 M \bigr) =
\frac{\#e_{\psi_2}^H M \cdot \# e_{\psi_5}^H M}{\# e_\sigma^{C_5}M},$$
that is, Theorem \ref{MainTh1} (2) in the case of $\chi=\chi_4^G$ holds. 
\end{proof} 

% Syaji ------------------------------------------------------------
\begin{ack}
The author would like to thank Yoshichika Iizuka for his many suggestions.
\end{ack}

% Appendix ---------------------------------------------------------
%#!lualatex ZZpG.tex

\section{Appendix}

We will explain how to use Magma to check the formulas.
When $a=(1, 2)(3, 4)$ and $b=(1, 2, 3)$, 
it is normally calculated as $ab=(2, 4, 3)$ in $A_4$.
Note that this order of multiplication must be reversed when using Magma. 
To check this, enter the following on Magma.
\begin{lstlisting}[language=C, frame=single, basicstyle=\ttfamily\tiny, escapechar=\@]
> A4:=AlternatingGroup(4);
> a:=A4!(1,2)(3,4);
> b:=A4!(1,2,3);
> b * a;
(2, 4, 3)
\end{lstlisting}
In this paper, Magma V2.26-10 will be used primarily.
%#!lualatex ZZpG.tex

\subsection{The program for Lemma \ref{168-3_Lem}}\label{168-3_Com}

Save the following program of Magma as \verb@2-1_Def.txt@.
\begin{lstlisting}[language=C, frame=single, basicstyle=\ttfamily\tiny, escapechar=\@]
G:=PSL(2, 7);
ChG:=CharacterTable(G);

PZ<x>:=PolynomialAlgebra(Integers());
F<z>:=CyclotomicField(7);
R:=GroupAlgebra(F, G);

e2:=R![(ChG[2](Id(G))/#G)*R!ChG[2](Inverse(g)):g in G];

a:=G!(1,8,2,4,3,7,5);
C7:=sub<G|a>;
ChC7:=CharacterTable(C7);

chi1:=ChC7[6]; // May depends on the version of Magma.
chi2:=ChC7[4]; // May depends on the version of Magma.
chi4:=ChC7[5]; // May depends on the version of Magma.

r:=G!(1,7,3,8)(2,6,4,5);
s:=G!(1,2)(3,4)(5,7)(6,8);
D4:=sub<G|r,s>;
Num:=2; // Depends on the version of Magma.
eta:=CharacterTable(D4)[Num];

eC7_1:=R!0;
for g in C7 do eC7_1 := eC7_1 +R!(1/#C7)*chi1(g^(-1))*R!g; end for;
eC7_2:=R!0;
for g in C7 do eC7_2 := eC7_2 +R!(1/#C7)*chi2(g^(-1))*R!g; end for;
eC7_4:=R!0;
for g in C7 do eC7_4 := eC7_4 +R!(1/#C7)*chi4(g^(-1))*R!g; end for;

eD4:=R!0;
for g in D4 do eD4 := eD4 +R!(1/#D4)*eta(g^(-1))*R!g; end for;

PF<[T]>:=PolynomialRing(F,168);
PFG:=GroupAlgebra(PF, G);
E:=[Eltseq(Basis(VectorSpace(Rationals(), 168))[i]): i in [1..168]];
t:=PFG![T[i]:i in [1..168]];
\end{lstlisting}

One can confirm that the lemma holds by typing the following commands in the interactive mode of Magma V2.27-8.
\begin{lstlisting}[language=C, frame=single, basicstyle=\ttfamily\tiny, escapechar=\@]
> load "2-1_Def.txt";

/* check */
> G eq sub<SymmetricGroup(8)|(3,6,7)(4,5,8), (1,8,2)(4,5,6)>;
true
> chi1(a);
zeta(7)_7
> chi2(a);
zeta(7)_7^2 
> chi4(a);
zeta(7)_7^4
> (eta(r) eq -1) and (eta(s) eq -1);
true


/* Lemma 2.1 (1) */
> ChG[2] eq Induction(chi1,G)-Induction(eta,G);
true
> ChG[2] eq Induction(chi2,G)-Induction(eta,G);
true
> ChG[2] eq Induction(chi4,G)-Induction(eta,G);
true

/* Lemma 2.1 (2) */
> (eC7_1+eC7_2+eC7_4)*e2 eq e2;
true

/* Lemma 2.1 (3) */
> h:=G!(1,7,8)(3,5,4);
> eC7_4 eq h*eC7_2*h^(-1);
true
> eC7_1 eq h^(-1)*eC7_2*h;     
true

/* Lemma 2.1 (4), q1 */ 
> b:=Matrix(F,1,168,[Eltseq(eD4*eC7_1)[i]:i in [1..168]]);
> c:=PFG![PF!Eltseq(eC7_1*(R!1-e2))[i]:i in [1..168]];
> time aij:=[Evaluate(Eltseq(t*c)[i], E[j]): i,j in [1..168]];
Time: 84686.000
> A:=Matrix(F,168,168,aij);
> time q1,x:=Solution(Transpose(A),b);
Time: 0.140
> R!Eltseq(q1)*eC7_1*(R!1-e2) eq eD4*eC7_1;
true
> R!Eltseq(q1);                                                                       
1/8*(-z^5 - z^4 - z^3 - z^2 - z - 1)*(1, 6, 8, 3)(2, 5, 7, 4) 
+ 1/8*(1, 3)(2, 4)(5, 6)(7, 8) - 1/8*z^5*(1, 4, 6, 3)(2, 8, 5, 7) 
+ 1/8*z^2*(1, 7, 4, 8, 5, 3, 2) + 1/8*z^5*(1, 8, 3, 6, 5, 2, 4) 
- 1/8*z^2*(2, 7, 3, 8, 6, 5, 4) - 1/8*(1, 4)(2, 3)(5, 8)(6, 7) 
+ 1/8*(z^5 + z^4 + z^3 + z^2 + z + 1)*(1, 5, 4)(2, 6, 3)

/* Lemma 2.1 (4), q2 */
> bb:=Matrix(F,1,168,[Eltseq(eC7_1*(R!1-e2))[i]:i in [1..168]]);
> cc:=PFG![PF!Eltseq(eD4*eC7_1)[i]:i in [1..168]];
> time aaij:=[Evaluate(Eltseq(t*cc)[i], E[j]): i,j in [1..168]];
Time: 10773.930
> AA:=Matrix(F,168,168,aaij);
> q2,xx:=Solution(Transpose(AA),bb);
> eC7_1*(R!1-e2) eq R!Eltseq(q2)*eD4*eC7_1;
true
> R!Eltseq(q2);
-z^4*(1, 3, 2, 5)(4, 7, 6, 8) 
+ (-z^5 - z^4 - z^3 - z^2 - z - 1)*(1, 8, 2, 4, 3, 7, 5) 
+ 1/203*(z^5 + 50*z^4 - 14*z^3 + 40*z^2 - 40*z - 16)*(1, 2, 3, 6)(4, 8, 7, 5) 
+ 1/203*(z^5 + 50*z^4 - 14*z^3 + 40*z^2 - 40*z - 16)*(2, 4, 3)(6, 7, 8) 
- z*(1, 8, 4, 3)(2, 5, 6, 7) + z^3*(1, 6, 3, 2)(4, 5, 7, 8) 
+ z^2*(1, 7, 4, 8, 5, 3, 2) 
+ 1/203*(-16*z^5 - 17*z^4 - 66*z^3 - 2*z^2 - 56*z + 24)*(1, 3, 2)(5, 6, 7) 
+ 1/203*(-49*z^5 + 15*z^4 - 39*z^3 + 41*z^2 + 17*z + 1)*(2, 6, 4)(3, 5, 8) 
+ 1/203*(80*z^5 + 56*z^4 + 40*z^3 + 39*z^2 - 10*z + 54)*(1, 2, 5)(3, 4, 8) 
+ 1/203*(54*z^5 - 26*z^4 - 2*z^3 + 14*z^2 + 15*z + 64)*(1, 6)(2, 4)(3, 7)(5, 8) 
+ z^5*(1,7, 6)(2, 5, 4) 
+ 1/203*(64*z^5 + 10*z^4 + 90*z^3 + 66*z^2 + 50*z + 49)*(1, 3, 6)(2, 8, 4) 
+ 1/203*(-54*z^5 + 26*z^4 + 2*z^3 - 14*z^2 - 15*z - 64)*(2, 7, 3, 8, 6, 5, 4)
+ 1/203*(-49*z^5 + 15*z^4 - 39*z^3 + 41*z^2 + 17*z + 1)*(1, 8, 4)(2, 7, 3) 
+ 1/203*(-24*z^5 - 40*z^4 - 41*z^3 - 90*z^2 - 26*z - 80)*(1, 2, 7, 3)(4, 8, 5, 6) 
- (1, 4)(2, 3)(5, 8)(6, 7) 
+ 1/203*(-24*z^5 - 40*z^4 - 41*z^3 - 90*z^2 - 26*z - 80)*(1, 5, 4)(2, 6, 3) 
+ 1/203*(16*z^5 + 17*z^4 + 66*z^3 + 2*z^2 + 56*z - 24)*(1, 6, 7, 5)(2, 3, 8, 4) 
+ 1/203*(-80*z^5 - 56*z^4 - 40*z^3 - 39*z^2 + 10*z - 54)*(1, 7, 8, 5)(2, 6, 3, 4) 
+ 1/203*(-64*z^5 - 10*z^4 - 90*z^3 - 66*z^2 - 50*z - 49)*(1, 3, 5)(2, 7, 4)


/* Lemma 2.1 (5), q3 */
> b3:=Matrix(F,1,168,[Eltseq(eD4)[i]:i in [1..168]]);
> c3:=PFG![PF!Eltseq(eD4*eC7_1)[i]:i in [1..168]];
> time a3ij:=[Evaluate(Eltseq(c3*t)[i], E[j]): i,j in [1..168]];
Time: 11119.930
> A3:=Matrix(F,168,168,a3ij);
> q3,x3:=Solution(Transpose(A3),b3);
> eD4*eC7_1*R!Eltseq(q3) eq eD4;
true
> R!Eltseq(q3);
1/232*(-173*z^5 + 50*z^4 + 102*z^3 - 395*z^2 - 69*z - 509)*(1, 8, 7, 4)(2, 3, 5, 6) + 
1/232*(-189*z^5 + 207*z^4 - 457*z^3 + 38*z^2 - 415*z - 108)*(1, 4)(2, 6)(3, 8)(5, 7) +
1/116*(69*z^5 + 57*z^4 + 223*z^3 + 63*z^2 + 53*z + 172)*(1, 5, 3, 4)(2, 7, 8, 6) + 
1/232*(214*z^5 - 175*z^4 + 107*z^3 - 53*z^2 + 24*z + 317)*(1, 6, 8, 3)(2, 5, 7, 4) + 
1/8*(-15*z^5 - 10*z^4 - 13*z^3 - 17*z^2 - 8*z - 14)*(1, 7, 5, 3)(2, 8, 6, 4) + 
1/232*(-59*z^5 - 137*z^4 - 479*z^3 - 98*z^2 - 163*z + 306)*(1, 3)(2, 4)(5, 6)(7, 8) +
1/232*(-381*z^5 + 90*z^4 + 27*z^3 - 73*z^2 - 14*z - 342)*(1, 6, 3, 2)(4, 5, 7, 8) +
1/232*(322*z^5 + 179*z^4 - 100*z^3 + 381*z^2 - 149*z + 242)*(1, 7, 4, 8, 5, 3, 2) +
1/232*(59*z^5 - 269*z^4 + 73*z^3 - 308*z^2 + 163*z + 100)*(1, 3, 2)(5, 6, 7) +
1/232*(713*z^5 - 78*z^4 + 371*z^3 + 303*z^2 + 306*z + 772)*(1, 4)(2, 5)(3, 7)(6, 8) +
1/116*(-106*z^5 - 196*z^4 - 53*z^3 - 209*z^2 + 64*z - 102)*(1, 6)(2, 4)(3, 7)(5, 8) +
1/232*(-75*z^5 + 49*z^4 - 342*z^3 + 306*z^2 - 16*z + 127)*(1, 7, 6)(2, 5, 4) +
1/232*(511*z^5 - 28*z^4 + 357*z^3 + 343*z^2 + 266*z + 350)*(1, 3, 6)(2, 8, 4) +
1/232*(-425*z^5 - 138*z^4 - 140*z^3 - 557*z^2 + 151*z - 711)*(2, 7, 3, 8, 6, 5, 4) +
1/232*(658*z^5 + 275*z^4 + 706*z^3 + 481*z^2 + 447*z + 492)*(1, 8, 4)(2, 7, 3) +
1/232*(20*z^5 + 391*z^4 - 77*z^3 + 597*z^2 + 12*z + 289)*(1, 2, 7, 3)(4, 8, 5, 6) +
1/116*(-166*z^5 - 6*z^4 + 4*z^3 - 115*z^2 + 57*z - 418)*(1, 4)(2, 3)(5, 8)(6, 7) +
1/232*(-46*z^5 + 339*z^4 + 35*z^3 + 393*z^2 - 190*z + 127)*(1, 5, 4)(2, 6, 3) +
1/232*(79*z^5 + 180*z^4 + 170*z^3 + 231*z^2 - 57*z + 41)*(1, 6, 7, 5)(2, 3, 8, 4) +
1/116*(123*z^5 - 56*z^4 + 163*z^3 - 39*z^2 + 10*z - 54)*(1, 7, 8, 5)(2, 6, 3, 4) +
1/116*(-166*z^5 - 6*z^4 - 199*z^3 - 115*z^2 - 146*z - 215)*(1, 3, 5)(2, 7, 4)

/* Lemma 2.1 (5), q4 */
> b4:=Matrix(F,1,168,[Eltseq(eD4*eC7_1)[i]:i in [1..168]]);
> c4:=PFG![PF!Eltseq(eD4)[i]:i in [1..168]];
> time a4ij:=[Evaluate(Eltseq(c4*t)[i], E[j]): i,j in [1..168]];
Time: 377.050
> A4:=Matrix(F,168,168,a4ij);
> q4,x4:=Solution(Transpose(A4),b4);
> eD4*eC7_1 eq eD4*R!Eltseq(q4);             
true
> R!Eltseq(q4);
1/7*Id(G) + 1/7*z^3*(1, 8, 2)(4, 5, 6) - 1/7*z^4*(1, 5, 3, 7, 6, 8, 2) 
+ 1/7*(-z^5 - z^4 - z^3 - z^2 - z - 1)*(3, 6, 7)(4, 5, 8) 
+ 1/7*z^2*(1, 2, 8, 4)(3, 7, 5, 6) - 1/7*z^5*(1, 3, 7, 8, 6, 4, 5) 
- 1/7*z*(1, 3)(2, 6)(4, 7)(5, 8)
\end{lstlisting}

%#!lualatex ZZpG.tex

\subsection{The program for Lemma \ref{168-6_Lem}}\label{168-6_Com}

Save the following program of Magma as \verb@2-2_Def.txt@.
\begin{lstlisting}[language=C, frame=single, basicstyle=\ttfamily\tiny, escapechar=\@]
G:=PSL(2, 7);
ChG:=CharacterTable(G);

A4_1:=sub<G|(2,6,4)(3,5,8), (1,2)(3,5)(4,6)(7,8)>;
ChA4_1:=CharacterTable(A4_1);
A4_2:=sub<G|(1,2,8)(4,6,5), (1,8)(2,3)(4,6)(5,7)>;
A4_3:=sub<G|(1,3,4)(5,6,8), (1,7)(2,8)(3,4)(5,6)>;
A4_4:=sub<G|(1,4,5)(2,3,6), (1,8)(2,7)(3,6)(4,5)>;
A4_5:=sub<G|(1,2,5)(3,4,8), (1,5)(2,7)(3,8)(4,6)>;
A4_6:=sub<G|(1,6,5)(4,8,7), (1,5)(2,8)(3,6)(4,7)>;
A4_7:=sub<G|(1,7,6)(2,5,4), (1,8)(2,4)(3,5)(6,7)>;
ConjA4a:=[A4_1, A4_2, A4_3, A4_4, A4_5, A4_6];
H:=sub<G|(1,4,2)(5,8,7), (1,8,2,4,3,7,5)>;
ChH:=CharacterTable(H);

R:=GroupAlgebra(Rationals(),G);
e4:=R![(ChG[4](Id(G))/#G)*R!ChG[4](Inverse(g)):g in G];

eA4a:=[R!0:i in [1..6]];
for i in [1..6] do
for g in ConjA4a[i] do eA4a[i]:=eA4a[i]+R!(1/#ConjA4a[i])*(R!g); end for;
end for;

eH:=R!0;
for g in H do eH:=eH+R!(1/#H)*(R!g); end for;

PQ<[T]>:=PolynomialRing(Rationals(),168);
PQG:=GroupAlgebra(PQ, G);
E:=[Eltseq(Basis(VectorSpace(Rationals(), 168))[i]): i in [1..168]];
t:=PQG![T[i]:i in [1..168]];
\end{lstlisting}

One can confirm that the lemma holds by typing the following commands in the interactive mode of Magma V2.27-8.
\begin{lstlisting}[language=C, frame=single, basicstyle=\ttfamily\tiny, escapechar=\@]
> load "2-2+Def.txt";

/* Lemma 2.2 (1) */
> ChG[4] eq Induction(ChA4_1,G)[1] - Induction(ChH,G)[1];
true


/* Lemma 2.2 (2) */
> time e4 in R*(&+eA4a);  
true
Time: 348.770

> b0:=Matrix(Rationals(),1,168,[Eltseq(e4)[i]:i in [1..168]]);
> c0:=PQG![PQ!Eltseq(&+eA4a)[i]:i in [1..168]];
> time a0ij:=[Evaluate(Eltseq(t*c0)[i], E[j]): i,j in [1..168]];
Time: 172.640
> A0:=Matrix(Rationals(),168,168,a0ij);
> q0,x0:=Solution(Transpose(A0),b0);
> r0:=R!Eltseq(q0);
> r0*(eA4a[1]+eA4a[2]+eA4a[3]+eA4a[4]+eA4a[5]+eA4a[6]) eq e4;
true
> for i in [1..168] do if not(2^5*3*7*q0[1,i] in Integers()) then print(i); 
end if; end for;
> r0; 
-69/14*(1, 3, 7, 2)(4, 6, 5, 8) - 27/14*(1, 5, 7, 3, 4, 2, 8) 
+ 27/14*(1, 2)(3, 8)(4, 7)(5, 6) + 577/224*(1, 5, 4, 6, 8, 7, 3) 
- 745/336*(1, 6, 4, 3, 2, 8, 5) - 349/96*(1, 7, 3, 2, 4, 6, 5) 
+ 1695/224*(1, 3, 2, 5)(4, 7, 6, 8) + 1991/168*(2, 7, 8)(3, 5, 6) 
- 4261/672*(1, 8)(2, 7)(3, 6)(4, 5) - 1367/336*(1, 2, 7)(3, 4, 6) 
- 73/21*(1, 4, 8)(2, 3, 7) + 67/12*(1, 5, 8)(2, 6, 7) 
+ 23/4*(1, 6, 5, 4)(2, 3, 7, 8) + 817/672*(1, 7, 4)(2, 6, 8) 
- 1513/672*(1, 3, 6, 4)(2, 7, 5, 8) + 113/168*(2, 3, 8)(4, 5, 7) 
+ 2851/672*(1, 8)(2, 3)(4, 6)(5, 7) + 2321/672*(1, 2, 3)(5, 7, 6) 
- 4043/672*(1, 4, 7, 8)(2, 6, 5, 3) - 139/84*(1, 5, 6, 8)(2, 7, 4, 3) 
+ 2887/336*(1, 6, 4, 7)(2, 5, 3, 8) - 29/84*(1, 7)(2, 8)(3, 4)(5, 6) 
- 419/224*(1, 3, 5, 7)(2, 4, 6, 8) + 3967/672*(2, 7, 5)(4, 6, 8) 
+ 2/3*(1, 8, 5)(2, 7, 6) + 53/336*(1, 2, 7, 4, 5, 8, 6) 
+ 361/84*(1, 4, 5)(2, 3, 6) + 607/672*(1, 5)(2, 6)(3, 7)(4, 8) 
- 4141/672*(1, 6, 8)(2, 3, 5) - 139/168*(1, 7, 3, 8)(2, 6, 4, 5) 
+ 3005/336*(1, 3, 4, 8)(2, 7, 6, 5) + 349/96*(2, 3, 6, 4, 7, 8, 5) 
- 6473/672*(1, 8, 6, 5)(2, 3, 4, 7) + 1249/168*(1, 2, 3, 5, 8, 4, 7) 
- 173/96*(1, 4, 3, 5)(2, 6, 8, 7) + 6817/672*(1, 5)(2, 7)(3, 8)(4, 6) 
+ 3355/672*(1, 6)(2, 5)(3, 4)(7, 8) + 613/168*(1, 7, 4, 6)(2, 8, 3, 5) 
- 1601/112*(1, 3, 8, 6)(2, 4, 7, 5) - 2197/336*(2, 6, 5)(3, 7, 4) 
+ 455/32*(1, 8, 2, 4, 3, 7, 5) - 290/21*(1, 2, 5, 8)(3, 7, 6, 4) 
- 3271/224*(1, 4, 6, 7, 8, 2, 5) - 1199/672*(1, 5)(2, 8)(3, 6)(4, 7) 
+ 4843/672*(1, 6, 7)(2, 4, 5) - 3737/336*(1, 7)(2, 5)(3, 6)(4, 8) 
+ 391/112*(1, 3, 7)(2, 8, 5) - 403/672*(2, 3, 4)(6, 8, 7) 
- 971/224*(1, 8, 7, 4)(2, 3, 5, 6) - 1637/336*(1, 2, 3, 6)(4, 8, 7, 5) 
+ 169/672*(1, 4)(2, 6)(3, 8)(5, 7) - 709/84*(1, 5, 3, 4)(2, 7, 8, 6) 
- 377/336*(1, 6, 8, 3)(2, 5, 7, 4) + 531/112*(1, 7, 5, 3)(2, 8, 6, 4) 
- 1963/336*(1, 3)(2, 4)(5, 6)(7, 8) + 1457/672*(2, 4, 3)(6, 7, 8) 
+ 1243/84*(1, 8, 4, 3)(2, 5, 6, 7) - 89/112*(1, 2, 6, 7)(3,8, 5, 4) 
- 1439/224*(1, 4, 6, 3)(2, 8, 5, 7) - 4331/672*(1, 5, 3)(2, 4, 7) 
+ 431/112*(1, 6, 3, 2)(4, 5, 7, 8) + 4331/672*(1, 7, 4, 8, 5, 3, 2) 
- 829/84*(1, 3, 2)(5, 6, 7) - 325/168*(2, 6, 4)(3, 5, 8) 
+ 215/24*(1, 8, 3, 6, 5, 2, 4) - 3/7*(1, 2, 5)(3, 4, 8) 
+ 1751/672*(1, 4)(2, 5)(3, 7)(6, 8) - 7771/672*(1, 5, 2, 8, 7, 6, 4) 
+ 473/224*(1, 6)(2, 4)(3, 7)(5, 8) + 3431/672*(1, 7, 6)(2, 5, 4) 
+ 3295/336*(1, 3, 6)(2, 8, 4) + 839/672*(2, 7, 3, 8, 6, 5, 4) 
- 691/112*(1, 8, 4)(2, 7, 3) - 1817/336*(1, 2, 7, 3)(4, 8, 5, 6) 
+ 841/96*(1, 4)(2, 3)(5, 8)(6, 7) - 2551/672*(1, 5, 4)(2, 6, 3) 
- 211/32*(1, 6, 7, 5)(2, 3, 8, 4) + 7513/672*(1, 7, 8, 5)(2, 6, 3, 4) 
- 2405/336*(1, 3, 5)(2, 7, 4)


/* Lemma 2.2 (3) */
> time V1:=Basis(RightAnnihilator(R*eA4a[2]+R*eA4a[3]+R*eA4a[4]+R*eA4a[5]+R*eA4a[6])
meet R*eA4a[1]);
Time: 11.850
> r1:=V1[2];
> eA4a[1]*e4 in (eA4a[1]*e4*r1)*R;
true
> for i in [2..6] do if not(eA4a[i]*r1 eq 0) then print(i); end if; end for;
> b1:=Matrix(Rationals(),1,168,[Eltseq(eA4a[1]*e4)[i]:i in [1..168]]);
> c1:=PQG![PQ!Eltseq(eA4a[1]*e4*r1)[i]:i in [1..168]];
> time a1ij:=[Evaluate(Eltseq(c1*t)[i], E[j]): i,j in [1..168]];
Time: 730.590
> A1:=Matrix(Rationals(),168,168,a1ij);
> q1,x1:=Solution(Transpose(A1),b1);
> rr1:=R!Eltseq(q1);
> eA4a[1]*e4*r1*rr1 eq eA4a[1]*e4;
true
> r1;
(1, 3, 4)(5, 6, 8) + (1, 8, 5, 2)(3, 4, 6, 7) + (1, 5, 4, 2)(3, 6, 7, 8) 
+ (1, 7, 5)(3,6, 8) + (1, 8, 7, 5, 2, 6, 3) - (1, 5, 2, 3)(4, 8, 6, 7) 
- (1, 7)(2, 3)(4, 5)(6, 8)- (1, 3)(2, 8)(4, 5)(6, 7) + (1, 5, 4, 6, 8, 7, 3) 
+ (1, 7, 3, 2, 4, 6, 5) - (1, 3, 2, 5)(4, 7, 6, 8) - (1, 8)(2, 7)(3, 6)(4, 5) 
- (1, 5, 6, 8)(2, 7, 4, 3) - (1, 7)(2, 8)(3, 4)(5, 6) - (1, 3, 4, 8)(2, 7, 6, 5) 
- (1, 8, 6, 5)(2, 3, 4, 7) - (1, 5)(2, 8)(3, 6)(4, 7) - (1, 7)(2, 5)(3, 6)(4, 8) 
+ (1, 3)(2, 4)(5, 6)(7, 8) - (1, 8, 4, 3)(2, 5, 6, 7) + (1, 3, 2)(5, 6, 7) 
+ (1, 8, 3, 6, 5, 2, 4) + (1, 5, 4)(2, 6, 3) + (1, 7, 8, 5)(2, 6, 3, 4)
> rr1;
1/14*(1, 3, 6)(2, 8, 4) + 1/14*(2, 7, 3, 8, 6, 5, 4) + 1/14*(1, 8, 4)(2, 7, 3) 
+ 1/14*(1, 4)(2, 3)(5, 8)(6, 7) + 1/14*(1, 5, 4)(2, 6, 3) 
+ 1/14*(1, 3, 5)(2, 7, 4)

> V2:=Basis(RightAnnihilator(R*eA4a[1]+R*eA4a[3]+R*eA4a[4]+R*eA4a[5]+R*eA4a[6])
meet R*eA4a[2]);
> r2:=V2[2];
> eA4a[2]*e4 in (eA4a[2]*e4*r2)*R;
true
> for i in [1,3,4,5,6] do if not(eA4a[i]*r2 eq 0) then print(i); end if; end for;
> b2:=Matrix(Rationals(),1,168,[Eltseq(eA4a[2]*e4)[i]:i in [1..168]]);
> c2:=PQG![PQ!Eltseq(eA4a[2]*e4*r2)[i]:i in [1..168]];
> time a2ij:=[Evaluate(Eltseq(c2*t)[i], E[j]): i,j in [1..168]];
Time: 736.230
> A2:=Matrix(Rationals(),168,168,a2ij);
> q2,x2:=Solution(Transpose(A2),b2);
> rr2:=R!Eltseq(q2);
> eA4a[2]*e4*r2*rr2 eq eA4a[2]*e4;
true
> r2;
(1, 7, 6, 5, 3, 8, 4) + (1, 5, 2)(3, 8, 4) + (1, 4, 2)(5, 8, 7) 
+ (1, 6, 5)(4, 8, 7) + (1, 5, 2, 3)(4, 8, 6, 7) - (1, 4, 2, 7)(3, 6, 8, 5) 
- (1, 6)(2, 7)(3, 5)(4, 8) - (1, 7, 2, 4)(3, 5, 8, 6) 
- (1, 5, 8, 7)(2, 4, 3, 6) + (1, 4, 3)(5, 8, 6) + (1, 6, 4, 3, 2, 8, 5) 
- (1, 5)(2, 6)(3, 7)(4, 8) - (1, 4, 3, 5)(2, 6, 8, 7) 
- (1, 6)(2, 5)(3, 4)(7, 8) - (1, 7)(2, 5)(3, 6)(4, 8) 
- (1, 5, 3, 4)(2, 7, 8, 6) + (1, 7, 5, 3)(2, 8, 6, 4)
+ (1, 7, 4, 8, 5, 3, 2) - (1, 4)(2, 5)(3, 7)(6, 8) + (1, 5, 2, 8, 7, 6, 4) 
- (1, 6)(2, 4)(3, 7)(5, 8) + (1, 4)(2, 3)(5, 8)(6, 7) 
+ (1, 6, 7, 5)(2, 3, 8, 4) - (1, 7, 8, 5)(2, 6, 3, 4)
> rr2;
-1/14*(1, 7, 8, 5)(2, 6, 3, 4)

> V3:=Basis(RightAnnihilator(R*eA4a[1]+R*eA4a[2]+R*eA4a[4]+R*eA4a[5]+R*eA4a[6])
meet R*eA4a[3]);
> r3:=V3[2];
> eA4a[3]*e4 in (eA4a[3]*e4*r3)*R;
true
> for i in [1,2,4,5,6] do if not(eA4a[i]*r3 eq 0) then print(i); end if; end for;
> b3:=Matrix(Rationals(),1,168,[Eltseq(eA4a[3]*e4)[i]:i in [1..168]]);
> c3:=PQG![PQ!Eltseq(eA4a[3]*e4*r3)[i]:i in [1..168]];
> time a3ij:=[Evaluate(Eltseq(c3*t)[i], E[j]): i,j in [1..168]];
Time: 733.890
> A3:=Matrix(Rationals(),168,168,a3ij);
> q3,x3:=Solution(Transpose(A3),b3);
> rr3:=R!Eltseq(q3);
> eA4a[3]*e4*r3*rr3 eq eA4a[3]*e4;
true
> r3;
(1, 5, 3, 7, 6, 8, 2) - (1, 5, 4, 2)(3, 6, 7, 8) + (1, 6, 2, 7, 5, 4, 3) 
- (1, 2, 4, 5)(3, 8, 7, 6) + (1, 2)(3, 4)(5, 7)(6, 8) - (1, 5, 7, 6)(2, 4, 8, 3) 
+ (1, 2, 4)(5, 7, 8) - (1, 6)(2, 7)(3, 5)(4, 8) - (1, 2)(3, 5)(4, 6)(7, 8) 
+ (1, 6, 8, 5, 4, 7, 2) - (1, 8)(2, 4)(3, 5)(6, 7) - (1, 8)(2, 7)(3, 6)(4, 5) 
+ (1, 6, 5, 4)(2, 3, 7, 8) - (1, 8)(2, 3)(4, 6)(5, 7) + (1, 2, 3)(5, 7, 6) 
+ (1, 8, 5)(2, 7, 6) + (1, 8, 6, 5)(2, 3, 4, 7) - (1, 5)(2, 7)(3, 8)(4, 6) 
+ (1, 8, 2, 4, 3, 7, 5) - (1, 2, 3, 6)(4, 8, 7, 5) + (1, 5, 3, 4)(2, 7, 8, 6) 
+ (1, 5, 3)(2, 4, 7) - (1, 6, 3, 2)(4, 5, 7, 8) - (1, 6, 7, 5)(2, 3, 8, 4)
> rr3;
-1/14*(1, 6, 7, 5)(2, 3, 8, 4)

> V4:=Basis(RightAnnihilator(R*eA4a[1]+R*eA4a[2]+R*eA4a[3]+R*eA4a[5]+R*eA4a[6])
meet R*eA4a[4]);
> r4:=V4[2];
> eA4a[4]*e4 in (eA4a[4]*e4*r4)*R;
true
> for i in [1,2,3,5,6] do if not(eA4a[i]*r4 eq 0) then print(i); end if; end for;
> b4:=Matrix(Rationals(),1,168,[Eltseq(eA4a[4]*e4)[i]:i in [1..168]]);
> c4:=PQG![PQ!Eltseq(eA4a[4]*e4*r4)[i]:i in [1..168]];
> time a4ij:=[Evaluate(Eltseq(c4*t)[i], E[j]): i,j in [1..168]];
Time: 731.270
> A4:=Matrix(Rationals(),168,168,a4ij);
> q4,x4:=Solution(Transpose(A4),b4);
> rr4:=R!Eltseq(q4);
> eA4a[4]*e4*r4*rr4 eq eA4a[4]*e4;
true
> r4;
(1, 2, 8, 6, 7, 3, 5) + (1, 2, 4, 5)(3, 8, 7, 6) - (1, 2)(3, 4)(5, 7)(6, 8) 
+ (1, 6, 2)(3, 8, 7) + (1, 7, 6, 2)(3, 4, 5, 8) + (1, 3, 5, 4, 8, 6, 2) 
- (1, 2)(3, 5)(4, 6)(7, 8) + (1, 6, 7, 2, 8, 3, 4) + (1, 7, 2, 4)(3, 5, 8, 6) 
+ (1, 3, 8, 7, 2, 5, 4) + (1, 6, 3)(2, 4, 8) + (1, 7, 3)(2, 5, 8) 
+ (1, 3)(2, 8)(4, 5)(6, 7) - (1, 2)(3, 8)(4, 7)(5, 6) 
- (1, 6, 4, 7)(2, 5, 3, 8) - (1, 7)(2, 8)(3, 4)(5, 6) 
- (1, 3, 5, 7)(2, 4, 6, 8) - (1, 6)(2, 5)(3, 4)(7, 8) 
- (1, 7, 4, 6)(2, 8, 3, 5) - (1, 3, 8, 6)(2, 4, 7, 5) 
- (1, 6, 8, 3)(2, 5, 7, 4) - (1, 7, 5, 3)(2, 8, 6, 4) 
- (1, 3)(2, 4)(5, 6)(7, 8) + (1, 2, 5)(3, 4, 8)
> rr4;
-1/14*(1, 5, 4)(2, 6, 3)

> V5:=Basis(RightAnnihilator(R*eA4a[1]+R*eA4a[2]+R*eA4a[3]+R*eA4a[4]+R*eA4a[6])
meet R*eA4a[5]);
> r5:=V5[2];
> eA4a[5]*e4 in (eA4a[5]*e4*r5)*R;
true
> for i in [1,2,3,4,6] do if not(eA4a[i]*r5 eq 0) then print(i); end if; end for;
> b5:=Matrix(Rationals(),1,168,[Eltseq(eA4a[5]*e4)[i]:i in [1..168]]);
> c5:=PQG![PQ!Eltseq(eA4a[5]*e4*r5)[i]:i in [1..168]];
> time a5ij:=[Evaluate(Eltseq(c5*t)[i], E[j]): i,j in [1..168]];
Time: 734.220
> A5:=Matrix(Rationals(),168,168,a5ij);
> q5,x5:=Solution(Transpose(A5),b5);
> rr5:=R!Eltseq(q5);
> eA4a[5]*e4*r5*rr5 eq eA4a[5]*e4;
true
> r5;
(1, 2, 8)(4, 6, 5) - (1, 8, 2, 6)(3, 5, 4, 7) + (1, 5, 8, 2, 3, 4, 6) 
- (1, 3)(2, 6)(4, 7)(5, 8) - (1, 6, 2, 8)(3, 7, 4, 5) 
+ (1, 7, 5, 6, 2, 4, 8) + (2, 4, 6)(3, 8, 5) - (1, 4, 5, 6)(2, 8, 7, 3) 
+ (1, 7, 2, 3, 8, 5, 6) + (2, 8, 7)(3, 6, 5) + (1, 5, 8, 7)(2, 4, 3, 6) 
+ (2, 6, 7, 5, 3, 4, 8) - (1, 8)(2, 4)(3, 5)(6, 7) 
- (1, 3)(2, 8)(4, 5)(6, 7) + (1, 5, 8)(2, 6, 7) 
- (1, 6, 5, 4)(2, 3, 7, 8) - (1, 4, 7, 8)(2, 6, 5, 3) 
+ (1, 7)(2, 8)(3, 4)(5, 6) - (1, 8, 7, 4)(2, 3, 5, 6) 
+ (1, 2, 3, 6)(4, 8, 7, 5) - (1, 3)(2, 4)(5, 6)(7, 8) 
+ (1, 2, 6, 7)(3, 8, 5, 4) - (1, 6)(2, 4)(3, 7)(5, 8) 
- (1, 4)(2, 3)(5, 8)(6, 7)
> rr5;
-1/14*(1, 4)(2, 3)(5, 8)(6, 7)

> V6:=Basis(RightAnnihilator(R*eA4a[1]+R*eA4a[2]+R*eA4a[3]+R*eA4a[4]+R*eA4a[5])
meet R*eA4a[6]);
> r6:=V6[1];
> eA4a[6]*e4 in (eA4a[6]*e4*r6)*R;
true
> for i in [1,2,3,4,5] do if not(eA4a[i]*r6 eq 0) then print(i); end if; end for;
> b6:=Matrix(Rationals(),1,168,[Eltseq(eA4a[6]*e4)[i]:i in [1..168]]);
> c6:=PQG![PQ!Eltseq(eA4a[6]*e4*r6)[i]:i in [1..168]];
> time a6ij:=[Evaluate(Eltseq(c6*t)[i], E[j]): i,j in [1..168]];
Time: 740.770
> AA6:=Matrix(Rationals(),168,168,a6ij);
> q6,x6:=Solution(Transpose(AA6),b6);
> rr6:=R!Eltseq(q6);
> eA4a[6]*e4*r6*rr6 eq eA4a[6]*e4;
true
> r6;
Id(G) - (1, 6, 3, 4, 7, 5, 8) - (3, 7, 6)(4, 8, 5) 
+ (1, 6, 5)(4, 8, 7) - (1, 6, 5, 8, 3, 2, 7) + (1, 5, 6)(4, 7, 8) 
- (1, 5, 7, 6)(2, 4, 8, 3) + (1, 6)(2, 7)(3, 5)(4, 8) + (2, 8, 7)(3, 6, 5) 
- (1, 5, 7)(3, 8, 6) - (1, 5, 7, 3, 4, 2, 8) + (1, 6, 3)(2, 4, 8) 
- (2, 8, 3)(4, 7, 5) + (2, 7, 8)(3, 5, 6) - (1, 3, 4, 8)(2, 7, 6, 5) 
- (1, 3, 8, 6)(2, 4, 7, 5) + (1, 5)(2, 8)(3, 6)(4, 7) - (1, 3, 7)(2, 8, 5) 
+ (1, 3)(2, 4)(5, 6)(7, 8) + (1, 5, 3)(2, 4, 7) - (1, 6)(2, 4)(3, 7)(5, 8) 
+ (1, 3, 6)(2, 8, 4) - (2, 7, 3, 8, 6, 5, 4) + (1, 3, 5)(2, 7, 4)
> rr6;
1/14*(1, 3, 5)(2, 7, 4)


/* Lemma 2.2 (4) */
> for g in G do if (eA4a[1] eq g^(-1)*eA4a[2]*g) and (g^2 eq G!1) then g; end if; 
end for;
(1, 2)(3, 4)(5, 7)(6, 8)
(1, 5)(2, 7)(3, 8)(4, 6)

> for g in G do if (eA4a[1] eq g^(-1)*eA4a[3]*g) and (g^2 eq G!1) then g; end if; 
end for;
(1, 5)(2, 6)(3, 7)(4, 8)
(1, 4)(2, 3)(5, 8)(6, 7)

> for g in G do if (eA4a[1] eq g^(-1)*eA4a[4]*g) and (g^2 eq G!1) then g; end if; 
end for;
(1, 3)(2, 6)(4, 7)(5, 8)
(1, 4)(2, 5)(3, 7)(6, 8)

> for g in G do if (eA4a[1] eq g^(-1)*eA4a[5]*g) and (g^2 eq G!1) then g; end if; 
end for;
(1, 2)(3, 8)(4, 7)(5, 6)
(1, 8)(2, 3)(4, 6)(5, 7)

> for g in G do if (eA4a[1] eq g^(-1)*eA4a[6]*g) and (g^2 eq G!1) then g; end if; 
end for;
(1, 8)(2, 4)(3, 5)(6, 7)
(1, 6)(2, 5)(3, 4)(7, 8)


/* Lemma 2.2 (5) */
> R*eA4a[1]*(R!1-e4) eq R*eH*eA4a[1];      
true

/* Lemma 2.2 (5), q1 */
> b1:=Matrix(Rationals(),1,168,[Eltseq(eH*eA4a[1])[i]:i in [1..168]]);
> c1:=PQG![PQ!Eltseq(eA4a[1]*(R!1-e4))[i]:i in [1..168]];
> time a1ij:=[Evaluate(Eltseq(t*c1)[i], E[j]): i,j in [1..168]];
Time: 836.300
> A1:=Matrix(Rationals(),168,168,a1ij);
> q1,x:=Solution(Transpose(A1),b1);
> R!Eltseq(q1)*eA4a[1]*(R!1-e4) eq eH*eA4a[1];
true
> R!Eltseq(q1);
3/7*(1, 7, 4, 8, 5, 3, 2) + 1/7*(1, 6)(2, 4)(3, 7)(5, 8) 
+ 1/7*(1, 8, 4)(2, 7, 3) + 1/7*(1, 2, 7, 3)(4, 8, 5, 6) 
- 1/7*(1, 4)(2, 3)(5, 8)(6, 7) + 2/7*(1, 6, 7, 5)(2, 3, 8, 4) 
- 1/7*(1, 7, 8, 5)(2, 6, 3, 4) + 1/7*(1, 3, 5)(2, 7, 4)

/* Lemma 2.2 (5), q2 */
> b2:=Matrix(Rationals(),1,168,[Eltseq(eA4a[1]*(R!1-e4))[i]:i in [1..168]]);
> c2:=PQG![PQ!Eltseq(eH*eA4a[1])[i]:i in [1..168]];
> time a2ij:=[Evaluate(Eltseq(t*c2)[i], E[j]): i,j in [1..168]];
Time: 246.820
> A2:=Matrix(Rationals(),168,168,a2ij);
> q2,x:=Solution(Transpose(A2),b2);
> eA4a[1]*(R!1-e4) eq R!Eltseq(q2)*eH*eA4a[1];
true
> R!Eltseq(q2);
(1, 4, 6, 3)(2, 8, 5, 7) - 3/4*(1, 3, 6)(2, 8, 4) - 3/4*(2, 7, 3, 8, 6, 5, 4) 
- 3/4*(1, 2, 7, 3)(4, 8, 5, 6) - 3/4*(1, 4)(2, 3)(5, 8)(6, 7) 
+ (1, 6, 7, 5)(2, 3, 8, 4) + (1, 7, 8, 5)(2, 6, 3, 4) + (1, 3, 5)(2, 7, 4)


/* Lemma 2.2 (6) */
> eH*eA4a[1]*R eq eH*R;
true
              
/* Lemma 2.2 (6), q3 */
> b3:=Matrix(Rationals(),1,168,[Eltseq(eH)[i]:i in [1..168]]);
> c3:=PQG![PQ!Eltseq(eH*eA4a[1])[i]:i in [1..168]];
> time a3ij:=[Evaluate(Eltseq(c3*t)[i], E[j]): i,j in [1..168]];
Time: 233.100
> A3:=Matrix(Rationals(),168,168,a3ij);
> q3,x:=Solution(Transpose(A3),b3);
> eH*eA4a[1]*R!Eltseq(q3) eq eH;
true
> R!Eltseq(q3);
(1, 6)(2, 4)(3, 7)(5, 8) - (1, 3, 6)(2, 8, 4) - (2, 7, 3, 8, 6, 5, 4) 
+ (1, 8, 4)(2, 7, 3) - (1, 4)(2, 3)(5, 8)(6, 7) + (1, 5, 4)(2, 6, 3) 
+ (1, 3, 5)(2, 7, 4)

/* Lemma 2.2 (6), q4 */
> b4:=Matrix(Rationals(),1,168,[Eltseq(eH*eA4a[1])[i]:i in [1..168]]);
> c4:=PQG![PQ!Eltseq(eH)[i]:i in [1..168]];
> time a4ij:=[Evaluate(Eltseq(c4*t)[i], E[j]): i,j in [1..168]];
Time: 37.770
> AA4:=Matrix(Rationals(),168,168,a4ij);
> q4,x:=Solution(Transpose(AA4),b4);
> eH*eA4a[1] eq eH*R!Eltseq(q4);
true
> R!Eltseq(q4);
1/4*(1, 5, 3, 4)(2, 7, 8, 6) + 1/4*(1, 3, 6)(2, 8, 4) 
+ 1/4*(1, 2, 7, 3)(4, 8, 5, 6) + 1/4*(1, 3, 5)(2, 7, 4)
\end{lstlisting}

%#!lualatex ZZpG.tex

\subsection{The program for Lemma \ref{168-7_Lem}}\label{168-7_Com}
Save the following program of Magma as \verb@2-3_Def.txt@.
\begin{lstlisting}[language=C, frame=single, basicstyle=\ttfamily\tiny, escapechar=\@]
G:=PSL(2, 7);
ChG:=CharacterTable(G);
S4_1:=sub<G|(2,6,4)(3,5,8), (1,5)(2,8)(3,6)(4,7)>;
ConjS4a:=[X: X in {S4_1^g:g in G}];

R:=GroupAlgebra(Rationals(),G);
e5:=R![(ChG[5](Id(G))/#G)*R!ChG[5](Inverse(g)):g in G];
eS4a:=[R!0:i in [1..7]];
for i in [1..7] do
for g in ConjS4a[i] do eS4a[i]:=
eS4a[i]+R!(1/#ConjS4a[i])*CharacterTable(ConjS4a[i])[2](g^(-1))*(R!g); end for;
end for;
\end{lstlisting}

%That Lemma \ref{168-7_Lem} holds can be confirmed by the results of the following program run by Magma V2.26-10.
One can confirm that the lemma holds by typing the following commands in the interactive mode of Magma V2.26-10.
\begin{lstlisting}[language=C, frame=single, basicstyle=\ttfamily\tiny, escapechar=\@]
> load "2-3_Def.txt";
Loading "2-3_Def.txt"

/* Lemma 2.3 (1) */
> ChG[5] eq Induction(CharacterTable(S4_1)[2],G);
true

/* Lemma 2.3 (2) */
> e5 eq &+eS4a; 
true

/* Lemma 2.3 (3) */
> for i, j in [1..7] do if (i lt j) then eS4a[i]*eS4a[j] eq R!0; end if;end for;
true (21 times)

/* Lemma 2.3 (4) */
> g2:=G!(1, 6)(2, 7)(3, 5)(4, 8);
> ConjS4a[2]^g2 eq ConjS4a[1];
true
> eS4a[2] eq g2^(-1)*eS4a[1]*g2;
true

> g3:=G!(1, 2)(3, 4)(5, 7)(6, 8);
> ConjS4a[3]^g3 eq ConjS4a[1];                                          
true 
> eS4a[3] eq g3^(-1)*eS4a[1]*g3;
true

> g4:=G!(1, 4)(2, 3)(5, 8)(6, 7);                                       
> ConjS4a[4]^g4 eq ConjS4a[1];                                          
true
> eS4a[4] eq g4^(-1)*eS4a[1]*g4;                                        
true

> g5:=G!(1, 4)(2, 5)(3, 7)(6, 8);                                       
> ConjS4a[5]^g5 eq ConjS4a[1];                                          
true
> eS4a[5] eq g5^(-1)*eS4a[1]*g5;                                        
true

> g6:=G!(1, 8)(2, 3)(4, 6)(5, 7);
> ConjS4a[6]^g6 eq ConjS4a[1];                                          
true
> eS4a[6] eq g6^(-1)*eS4a[1]*g6;                                        
true

> g7:=G!(1, 8)(2, 4)(3, 5)(6, 7);
> ConjS4a[7]^g7 eq ConjS4a[1];   
true
> eS4a[7] eq g7^(-1)*eS4a[1]*g7;                                        
true
\end{lstlisting}

\subsection{The program for Lemma \ref{168-8_Lem}}\label{168-8_Com}
Save the following program of Magma as \verb@2-4_Def.txt@.
\begin{lstlisting}[language=C, frame=single, basicstyle=\ttfamily\tiny, escapechar=\@]
G:=PSL(2, 7);
ChG:=CharacterTable(G);

F:=CyclotomicField(3);
R:=GroupAlgebra(F,G);
e6:=R![(ChG[6](Id(G))/#G)*R!ChG[6](Inverse(g)):g in G];

t:=[G!(1,4,2)(5,8,7), G!(1,4,7)(2,8,6), G!(1,5,2)(3,8,4), G!(1,7,4)(2,6,8), 
G!(1,8,5)(2,7,6), G!(1,5,4)(2,6,3), G!(1,7,5)(3,6,8), G!(2,3,4)(6,8,7)];

s:=[G!(1,8,2,4,3,7,5), G!(1,7,5,6,2,4,8), G!(1,4,2,5,6,3,8), G!(1,8,6,3,7,4,2), 
G!(1,5,3,7,6,8,2), G!(1,4,7,6,3,5,2), G!(1,8,5,7,4,3,6), G!(2,6,7,5,3,4,8)];

H:=[];
for j in [1..8] do Append(~H, sub<G|t[j],s[j]>); end for;

eH:=[R!0:i in [1..8]];
Num:=2; // May depends on the version of Magma.
for i in [1..8] do
for g in H[i] do eH[i]:=
eH[i]+R!(1/#H[i])*CharacterTable(H[i])[Num](g^(-1))*(R!g); end for;
end for;
\end{lstlisting}

%That Lemma \ref{168-8_Lem} holds can be confirmed by the results of the following program run by Magma V2.26-10.
One can confirm that the lemma holds by typing the following commands in the interactive mode of Magma V2.26-10.
\begin{lstlisting}[language=C, frame=single, basicstyle=\ttfamily\tiny, escapechar=\@]
> load "2-4_Def.txt";

/* check */
> Num;
2
> for j in [1..8] do CharacterTable(H[j])[Num](s[j]); end for;
1 (8 times)
> for j in [1..8] do CharacterTable(H[j])[Num](t[j]); end for;
-zeta(3)_3 - 1 (8times)

/* Lemma 2.4 (1) */
> ChG[6] eq Induction(CharacterTable(H[1])[Num], G);
true

/* Lemma 2.4 (2) */
> e6 eq &+eH;
true

/* Lemma 2.4 (3) */
> for i, j in [1..8] do if (i lt j) then eH[i]*eH[j] eq R!0; end if; end for;
true (28 times)

/* Lemma 2.4 (4) */
> g2:=G!(1, 8)(2, 7)(3, 6)(4, 5);
> H[2] eq H[1]^g2;               
true
> eH[2] eq g2^(-1)*eH[1]*g2;
true

> g3:=G!(1, 8)(2, 4)(3, 5)(6, 7);
> eH[3] eq g3^(-1)*eH[1]*g3;                                     
true

> g4:=G!(1, 2)(3, 8)(4, 7)(5, 6);                                
> eH[4] eq g4^(-1)*eH[1]*g4;                                     
true

> g5:=G!(1, 2)(3, 5)(4, 6)(7, 8);                                
> eH[5] eq g5^(-1)*eH[1]*g5;                                     
true

> g6:=G!(1, 2)(3, 4)(5, 7)(6, 8);                                
> eH[6] eq g6^(-1)*eH[1]*g6;                                     
true

> g7:=G!(1, 3)(2, 6)(4, 7)(5, 8);                                
> eH[7] eq g7^(-1)*eH[1]*g7;                                     
true

> g8:=G!(1, 6)(2, 5)(3, 4)(7, 8);                                
> eH[8] eq g8^(-1)*eH[1]*g8;                                     
true
\end{lstlisting}

%#!lualatex ZZpG.tex

\subsection{The program for Lemma \ref{504-7(2)_Lem} and \ref{504-7(3)_Lem}}
\label{504-7_Com}
Save the following program of Magma as \verb@3-1_Def.txt@.
\begin{lstlisting}[language=C, frame=single, basicstyle=\ttfamily\tiny, escapechar=\@]
Q3<z3>:=CyclotomicField(3);
Q9<z9>:=CyclotomicField(9);
res, phi := IsSubfield(Q3, Q9);
psi := phi^(-1);

PZ<x>:=PolynomialAlgebra(Integers());
F<w>:=NumberField(x^3-3*x-1:Global);
res2, phi2 := IsSubfield(F, Q9);
psi2 := phi2^(-1);

GF8<a>:=GF(8);
G:=Group("A1(8)");
ChG:=CharacterTable(G);

s:=[[0,a^5],[a^2,0]];
t:=[[a^5,a^2],[a^6,a^5]];
u:=[[a,a],[a^5,a]];
H:=sub<G|s,t,u>;
ChH:=CharacterTable(H);

h:=[[a^4,1],[1,0]];
C9:=sub<G|h>;
ChC9:=CharacterTable(C9);

QG:=GroupAlgebra(Rationals(), G);
Q3G:=GroupAlgebra(Q3, G);
Q9G:=GroupAlgebra(Q9, G);
FG:=GroupAlgebra(F, G);

e2:=QG![(ChG[2](Id(G))/#G)*QG!ChG[2](g^(-1)):g in G];
e2Q3:=Q3G![(ChG[2](Id(G))/#G)*Q3G!ChG[2](g^(-1)):g in G];

e3:=FG![(ChG[3](Id(G))/#G)*FG!psi2(ChG[3](g^(-1))):g in G];
e3Q9:=Q9G![(ChG[3](Id(G))/#G)*Q9G!(ChG[3](g^(-1))):g in G];

eH:=[QG!0:i in [1..8]];
for i in [1..8] do
for g in H do eH[i]:=eH[i]+QG!(1/#H)*(ChH[i](g^(-1)))*(QG!g); end for;
end for;

eHQ3:=[Q3G!0:i in [1..8]];
for i in [1..8] do
for g in H do eHQ3[i]:=eHQ3[i]+Q3G!(1/#H)*(ChH[i](g^(-1)))*(Q3G!g); end for;
end for;

eH_F:=[FG!0:i in [1..8]];
for i in [1..8] do
for g in H do eH_F[i]:=eH_F[i]+FG!(1/#H)*(ChH[i](g^(-1)))*(FG!g); end for;
end for;

eHQ9:=[Q9G!0:i in [1..8]];
for i in [1..8] do
for g in H do eHQ9[i]:=eHQ9[i]+Q9G!(1/#H)*(ChH[i](g^(-1)))*(Q9G!g); end for;
end for;

eC9_2:=Q3G!0;
Num2:=2; // May depends on the version of Magma.
for g in C9 do eC9_2:=eC9_2+Q3G!(1/#C9)*psi(ChC9[Num2](g^(-1)))*(Q3G!g);end for;

eC9_8:=Q9G!0;
Num1:=8; // May depends on the version of Magma.
for g in C9 do eC9_8:=eC9_8+Q9G!(1/#C9)*ChC9[Num1](g^(-1))*(Q9G!g); end for;
\end{lstlisting}

One can confirm that the lemma holds 
by typing the following commands in the interactive mode of Magma V2.28-5.
\begin{lstlisting}[language=C, frame=single, basicstyle=\ttfamily\tiny, escapechar=\@]
> load "3-1_Def.txt";

/* ConjugacyClasses(G) */
> C:=[];
> for i in [1..9] do C[i]:=G!ConjugacyClasses(G)[i][3]; end for;

/* check */
> gen1:=G![[0,1],[1,1]]; gen2:=G![[a^3,a],[a^4,a]];
> G eq sub<G|gen1, gen2>;                          
true

> Num := 2; // May depends on the version of Magma.
> ChH[Num](s);
-1
> ChH[Num](t);              
1
> ChH[Num](u);
1

> ChC9[Num2](h);
-zeta(3)_3 - 1

/* Lemma 3.1 (1) */
> ChG[2] eq Induction(ChH[Num], G)-Induction(ChC9[Num2],G);
true

/* Lemma 3.1 (2) */
> e2 eq (&+eH - eH[1])*e2;
true

/* Lemma 3.1 (3) */
> gg:=G![[a^6,a^3],[0,a]];
> gg3:=gg; eH[3] eq gg3*eH[2]*gg3^(-1);
true
> gg4:=gg^5; eH[4] eq gg4*eH[2]*gg4^(-1);     
true
> gg5:=gg^3; eH[5] eq gg5*eH[2]*gg5^(-1);
true
> gg6:=gg^2; eH[6] eq gg6*eH[2]*gg6^(-1);
true
> gg7:=gg^4; eH[7] eq gg7*eH[2]*gg7^(-1);
true
> gg8:=gg^6; eH[8] eq gg8*eH[2]*gg8^(-1);
true

/* Lemma 3.1 (4) */
> time (1-e2Q3)*eHQ3[Num]*Q3G eq eHQ3[Num]*eC9_2*Q3G;
true
Time: 1611.400

/* Lemma 3.1 (5) */
> time Q3G*eHQ3[Num]*eC9_2 eq Q3G*eC9_2;
true
Time: 31.250
\end{lstlisting}

One can confirm that the lemma holds 
by typing the following commands in the interactive mode of Magma V2.28-5.
\begin{lstlisting}[language=C, frame=single, basicstyle=\ttfamily\tiny, escapechar=\@]
> load "3-1_Def.txt";

/* check */
> Num := 2;  // May depends on the version of Magma.
> Num1;
8
> ChC9[Num1](h);
zeta(9)_9


/* Lemma 3.2 (1) */
> ChG[3] eq Induction(ChH[Num], G)-Induction(ChC9[Num1],G);
true

/* Lemma 3.2 (2) */
> e3 eq (&+eH_F - eH_F[1])*e3;
true

/* Lemma 3.2 (3) */
> time (1-e3Q9)*eHQ9[Num]*Q9G eq eHQ9[Num]*eC9_8*Q9G;
true
Time: 14875.000

/* Lemma 3.2 (4) */
> time Q9G*eHQ9[Num]*eC9_8 eq Q9G*eC9_8;
true
Time: 165.640
\end{lstlisting}

\subsection{The program for Lemma \ref{504-8_Lem}}\label{504-8_Com}
Save the following program of Magma as \verb@3-2_Def.txt@.
\begin{lstlisting}[language=C, frame=single, basicstyle=\ttfamily\tiny, escapechar=\@]
GF8<a>:=GF(8);
G:=Group("A1(8)");
ChG:=CharacterTable(G);

QG:=GroupAlgebra(Rationals(), G);
e6:=QG![(ChG[6](Id(G))/#G)*QG!ChG[6](g^(-1)):g in G];

D7:=[];
D7[1]:=sub<G|[[0,a^6],[a,0]], [[a^2,a],[a^3,a^3]]>;
D7[2]:=sub<G|[[0,a^2],[a^5,0]], [[a^2,a^4],[1,a^3]]>;
D7[3]:=sub<G|[[1,a^4],[0,1]], [[a^6,0],[a,a]]>;
D7[4]:=sub<G|[[a^4,a^6],[a^4,a^4]], [[a^4,a^6],[a^6,1]]>;
D7[5]:=sub<G|[[a,a^4],[a^2,a]], [[a^5,a^3],[a^4,0]]>;
D7[6]:=sub<G|[[a^4,1],[a^3,a^4]], [[1,1],[a^5,a^4]]>;
D7[7]:=sub<G|[[1,0],[1,1]], [[0,a^5],[a^2,a^5]]>;
D7[8]:=sub<G|[[a^6,a^2],[a^2,a^6]], [[a,a^2],[0,a^6]]>;
ChD7:=CharacterTable(D7[1]);

eD7:=[QG!0:i in [1..8]];
for i in [1..8] do
for g in D7[i] do eD7[i]:=
eD7[i]+QG!(1/#D7[i])*(CharacterTable(D7[i])[1](g^(-1)))*(QG!g); end for;
end for;

D9:=sub<G|[[a^5,a^3],[a^5,a^5]], [[a^4,1],[1,0]]>;
ChD9:=CharacterTable(D9);
eD9:=QG!0;
for g in D9 do eD9:=eD9+QG!(1/#D9)*(ChD9[1](g^(-1)))*(QG!g); end for;
\end{lstlisting}

One can confirm that the lemma holds by typing the following commands in the interactive mode of Magma V2.28-5.
\begin{lstlisting}[language=C, frame=single, basicstyle=\ttfamily\tiny, escapechar=\@]
> load "3-2_Def.txt";
Loading "3-2_Def.txt"

/* Lemma 3.3 (1) */
> ChG[6] eq Induction(ChD7[1],G)-Induction(ChD9[1],G);
true

/* Lemma 3.3 (2) */
> PQ<[T]>:=PolynomialRing(Rationals(),504);
> PQG:=GroupAlgebra(PQ, G);
> E:=[Eltseq(Basis(VectorSpace(Rationals(), 504))[i]): i in [1..504]];
> t:=PQG![T[i]:i in [1..504]];
> b:=Matrix(Rationals(),1,504,[Eltseq(e6)[i]:i in [1..504]]);
> c:=PQG![PQ!Eltseq(&+eD7)[i]:i in [1..504]];
> time aij:=[Evaluate(Eltseq(c*t)[i], E[j]): i,j in [1..504]];
Time: 12498.380
> A:=Matrix(Rationals(),504,504,aij);
> time x0,x:=Solution(Transpose(A),b);
Time: 0.790
> r0:=QG!Eltseq (x0);
> (&+eD7)*r0 eq e6;
true

/* Lemma 3.3 (3) */

> time r1:=Basis(LeftAnnihilator(eD7[2]*QG+eD7[3]*QG+eD7[4]*QG+eD7[5]*QG
+eD7[6]*QG+eD7[7]*QG+eD7[8]*QG) meet eD7[1]*QG);
Time: 2238.550
> time e6*eD7[1] in QG*(r1[1]*e6*eD7[1]);
true
Time: 58.140
> for i in [1..8] do if not(r1[1]*eD7[i] eq 0) then print i; end if; end for;
1

> time r2:=Basis(LeftAnnihilator(eD7[1]*QG+eD7[3]*QG+eD7[4]*QG+eD7[5]*QG
+eD7[6]*QG+eD7[7]*QG+eD7[8]*QG) meet eD7[2]*QG);
Time: 1694.870
> time e6*eD7[2] in QG*(r2[1]*e6*eD7[2]);
true
Time: 42.430
> for i in [1..8] do if not(r2[1]*eD7[i] eq 0) then print i; end if; end for;
2

> time r3:=Basis(LeftAnnihilator(eD7[1]*QG+eD7[2]*QG+eD7[4]*QG+eD7[5]*QG
+eD7[6]*QG+eD7[7]*QG+eD7[8]*QG) meet eD7[3]*QG);
Time: 1235.150
> time e6*eD7[3] in QG*(r3[1]*e6*eD7[3]);
true
Time: 48.250
> for i in [1..8] do if not(r3[1]*eD7[i] eq 0) then print i; end if; end for;
3

> time r4:=Basis(LeftAnnihilator(eD7[1]*QG+eD7[2]*QG+eD7[3]*QG+eD7[5]*QG
+eD7[6]*QG+eD7[7]*QG+eD7[8]*QG) meet eD7[4]*QG);
Time: 1792.220
> time e6*eD7[4] in QG*(r4[1]*e6*eD7[4]);
true
Time: 56.010
> for i in [1..8] do if not(r4[1]*eD7[i] eq 0) then print i; end if; end for;
4

> time r5:=Basis(LeftAnnihilator(eD7[1]*QG+eD7[2]*QG+eD7[3]*QG+eD7[4]*QG
+eD7[6]*QG+eD7[7]*QG+eD7[8]*QG) meet eD7[5]*QG);
Time: 1794.440
> time e6*eD7[5] in QG*(r5[1]*e6*eD7[5]);
true
Time: 54.910
> for i in [1..8] do if not(r5[1]*eD7[i] eq 0) then print i; end if; end for;
5

> time r6:=Basis(LeftAnnihilator(eD7[1]*QG+eD7[2]*QG+eD7[3]*QG+eD7[4]*QG
+eD7[5]*QG+eD7[7]*QG+eD7[8]*QG) meet eD7[6]*QG);
Time: 1217.640
> time e6*eD7[6] in QG*(r6[1]*e6*eD7[6]);
true
Time: 54.980
> for i in [1..8] do if not(r6[1]*eD7[i] eq 0) then print i; end if; end for;
6

> time r7:=Basis(LeftAnnihilator(eD7[1]*QG+eD7[2]*QG+eD7[3]*QG+eD7[4]*QG
+eD7[5]*QG+eD7[6]*QG+eD7[8]*QG) meet eD7[7]*QG);
Time: 1497.560
> time e6*eD7[7] in QG*(r7[1]*e6*eD7[7]);
true
Time: 45.120
> for i in [1..8] do if not(r7[1]*eD7[i] eq 0) then print i; end if; end for;
7

> time r8:=Basis(LeftAnnihilator(eD7[1]*QG+eD7[2]*QG+eD7[3]*QG+eD7[4]*QG
+eD7[5]*QG+eD7[6]*QG+eD7[7]*QG) meet eD7[8]*QG);
Time: 2955.720
> time e6*eD7[8] in QG*(r8[1]*e6*eD7[8]);
true
Time: 61.890
>for i in [1..8] do if not(r8[1]*eD7[i] eq 0) then print i; end if; end for;
8

/* Lemma 3.3 (4) */
> for g in G do if (eD7[2] eq g^(-1)*eD7[1]*g) and (g^2 eq G!1) then g; 
end if; end for;
[  0 a^4]
[a^3   0]

> for g in G do if (eD7[3] eq g^(-1)*eD7[1]*g) and (g^2 eq G!1) then g; 
end if; end for;                            
[a^2 a^3]
[a^2 a^2]

> for g in G do if (eD7[4] eq g^(-1)*eD7[1]*g) and (g^2 eq G!1) then g; 
end if; end for;
[  a   1]
[a^6   a]

> for g in G do if (eD7[5] eq g^(-1)*eD7[1]*g) and (g^2 eq G!1) then g; 
end if; end for;
[a^4 a^2]
[  a a^4]

> for g in G do if (eD7[6] eq g^(-1)*eD7[1]*g) and (g^2 eq G!1) then g; 
end if; end for;
[a^3 a^5]
[a^4 a^3]

> for g in G do if (eD7[7] eq g^(-1)*eD7[1]*g) and (g^2 eq G!1) then g; 
end if; end for;
[a^5   a]
[  1 a^5]

> for g in G do if (eD7[8] eq g^(-1)*eD7[1]*g) and (g^2 eq G!1) then g; 
end if; end for;
[a^6 a^6]
[a^5 a^6]

/* Lemma 3.3 (5) */
> time QG*eD7[1]*(1-e6) eq QG*eD9*eD7[1];
true
Time: 66.520

/* Lemma 3.3 (6) */
> time eD9*eD7[1]*QG eq eD9*QG;
true
Time: 31.600
\end{lstlisting}

\subsection{The program for Lemma \ref{504-9_Lem}}\label{504-9_Com}
Save the following program of Magma as \verb@3-3_Def.txt@.
\begin{lstlisting}[language=C, frame=single, basicstyle=\ttfamily\tiny, escapechar=\@]
GF8<a>:=GF(8);
G:=Group("A1(8)");
ChG:=CharacterTable(G);
Q7<z>:=CyclotomicField(7);
Q7G:=GroupAlgebra(Q7, G);
F8:=sub<G|[[a^6,a^6],[a^2,a^4]], [[0,a^5],[a^2,0]], 
[[a^5,a^2],[a^6,a^5]], [[a,a],[a^5,a]]>;
F:=[X: X in {F8^g:g in G}];
ChF8:=CharacterTable(F8);
gg:=G![[a^6,a^6],[a^2,a^4]];

e7:=Q7G![(ChG[7](Id(G))/#G)*Q7G!ChG[7](g^(-1)):g in G];

eF_2:=[Q7G!0:i in [1..9]];
Num:=2;  // May depends on the version of Magma.
for i in [1..9] do
for g in F[i] do eF_2[i]:=
eF_2[i]+Q7G!(1/#F[i])*(CharacterTable(F[i])[Num](g^(-1)))*(Q7G!g); end for;
end for;
\end{lstlisting}

One can confirm that the lemma holds by typing the following commands in the interactive mode of Magma V2.28-5.
\begin{lstlisting}[language=C, frame=single, basicstyle=\ttfamily\tiny, escapechar=\@]
> load "3-3_Def.txt";
Loading "3-3_Def.txt"

/* check */
> Num;
2
> ChF8[Num](gg);
zeta(7)_7^2


/* Lemma 3.4 (1) */
> ChG[7] eq Induction(ChF8[2], G);
true

/* Lemma 3.4 (2) */
> e7 eq &+(eF_2);
true

/* Lemma 3.4 (3) */
> for i, j in [1..9] do if (i lt j) then eF_2[i]*eF_2[j] eq Q7G!0; 
end if; end for;
true (36 times)

/* Lemma 3.4 (4) */
> for g in G do if (eF_2[2] eq g^(-1)*eF_2[1]*g) and (g^2 eq G!1) then g; 
end if; end for;
[  1 a^3]
[  0   1]
(others)

> for g in G do if (eF_2[3] eq g^(-1)*eF_2[1]*g) and (g^2 eq G!1) then g; 
end if; end for;
[  1 a^4]
[  0   1]
(others)

> for g in G do if (eF_2[4] eq g^(-1)*eF_2[1]*g) and (g^2 eq G!1) then g; 
end if; end for;                          
[  1   a]
[  0   1]
(others)

> for g in G do if (eF_2[5] eq g^(-1)*eF_2[1]*g) and (g^2 eq G!1) then g; 
end if; end for;
[  1   1]
[  0   1]
(others)

> for g in G do if (eF_2[6] eq g^(-1)*eF_2[1]*g) and (g^2 eq G!1) then g; 
end if; end for;
[  1 a^2]
[  0   1]
(others)

> for g in G do if (eF_2[7] eq g^(-1)*eF_2[1]*g) and (g^2 eq G!1) then g; 
end if; end for;
[  1 a^6]
[  0   1]
(others)

> for g in G do if (eF_2[8] eq g^(-1)*eF_2[1]*g) and (g^2 eq G!1) then g; 
end if; end for;
[  1 a^5]
[  0   1]
(others)

> for g in G do if (eF_2[9] eq g^(-1)*eF_2[1]*g) and (g^2 eq G!1) then g; 
end if; end for;
[  1   0]
[a^2   1]
(others)
\end{lstlisting}
%#!lualatex ZZpG.tex

\subsection{The program for Lemma \ref{360-3_Lem}}\label{360-3_Com}
Save the following program of Magma as \verb@4-1_Def.txt@.
\begin{lstlisting}[language=C, frame=single, basicstyle=\ttfamily\tiny, escapechar=\@]
G:=AlternatingGroup(6);
ChG:=CharacterTable(G);
Q3:=CyclotomicField(3);

s1:=G!(1,2)(3,6);
t1:=G!(1,4,2,5)(3,6);
h:=[G!1, G!(2,4)(3,5), G!(2,3)(5,6), G!(1,3)(2,5), G!(1,5)(2,3)];

s:=[]; t:=[];
for j in [1..5] do Append(~s, h[j]^(-1)*s1*h[j]); end for;
for j in [1..5] do Append(~t, h[j]^(-1)*t1*h[j]); end for;
D4:=[];
for j in [1..5] do Append(~D4, sub<G|s[j],t[j]>); end for;

H:=sub<G|(1,6,3),(2,4,5)>;
ChH:=CharacterTable(H);

QG:=GroupAlgebra(Rationals(),G);
e3:=QG![(ChG[3](Id(G))/#G)*QG!(ChG[3](g^(-1))):g in G];
Q3G:=GroupAlgebra(Q3,G);
e3Q3:=Q3G![(ChG[3](Id(G))/#G)*Q3G!(ChG[3](g^(-1))):g in G];

eD4:=[QG!0:i in [1..5]];
NumD:=3;
for i in [1..5] do
for g in D4[i] do 
eD4[i]:=eD4[i]+QG!(1/#D4[i])*CharacterTable(D4[i])[NumD](g^(-1))*(QG!g); 
end for; end for;

eD4Q3:=[Q3G!0:i in [1..5]];
for i in [1..5] do
for g in D4[i] do 
eD4Q3[i]:=eD4Q3[i]+Q3G!(1/#D4[i])*CharacterTable(D4[i])[NumD](g^(-1))*(Q3G!g); 
end for; end for;

eH:=Q3G!0;
NumH:=8;
for g in H do eH:=eH+Q3G!(1/#H)*ChH[NumH](g^(-1))*(Q3G!g); end for;
\end{lstlisting}

%That Lemma \ref{360-3_Lem} holds can be confirmed by the results of the following program run by Magma V2.26-10.
One can confirm that the lemma holds by typing the following commands in the interactive mode of Magma V2.26-10.
\begin{lstlisting}[language=C, frame=single, basicstyle=\ttfamily\tiny, escapechar=\@]
> for i,j in [1..5] do if not(i eq j) then #(D4[i] meet D4[j]); end if; end for;
1 
(20 times)

/* check */
> NumD;
3
> CharacterTable(D4[1])[3](s[1]);
1
> CharacterTable(D4[1])[3](t[1]);
-1

> NumH;
8
> ChH[8](H!(1,6,3));
zeta(3)_3
> ChH[8](H!(2,4,5));
zeta(3)_3


/* Lemma 4.1 (1) */
> ChG[3] eq Induction(CharacterTable(D4[1])[NumD],G)-Induction(ChH[NumH],G);
true

/* Lemma 4.1 (2) */
> time e3 in QG*(&+eD4);
true
Time: 35541.790

/* Lemma 4.1 (3) */
> time r1:=Basis(RightAnnihilator(QG*eD4[2]+QG*eD4[3]+QG*eD4[4]+QG*eD4[5]) 
meet QG*eD4[1]);
Time: 2767.910
> time eD4[1]*e3 in (eD4[1]*e3*r1[1])*QG;
true
Time: 17.500

> time r2:=Basis(RightAnnihilator(QG*eD4[1]+QG*eD4[3]+QG*eD4[4]+QG*eD4[5]) 
meet QG*eD4[2]);
Time: 2809.830
> time eD4[2]*e3 in (eD4[2]*e3*r2[1])*QG;
true
Time: 19.020

> time r3:=Basis(RightAnnihilator(QG*eD4[1]+QG*eD4[2]+QG*eD4[4]+QG*eD4[5]) 
meet QG*eD4[3]);
Time: 4202.500
> time eD4[3]*e3 in (eD4[3]*e3*r3[1])*QG;
true
Time: 19.280

> time r4:=Basis(RightAnnihilator(QG*eD4[1]+QG*eD4[2]+QG*eD4[3]+QG*eD4[5]) 
meet QG*eD4[4]);
Time: 2884.470
> time eD4[4]*e3 in (eD4[4]*e3*r4[1])*QG;
true
Time: 21.320

> time r5:=Basis(RightAnnihilator(QG*eD4[1]+QG*eD4[2]+QG*eD4[3]+QG*eD4[4]) 
meet QG*eD4[5]);
Time: 3855.870
> time eD4[5]*e3 in (eD4[5]*e3*r5[5])*QG;
true
Time: 20.180


/* Lemma 4.1 (4) */
> for i in [1..5] do eD4[i] eq h[i]^(-1)*eD4[1]*h[i]; end for;
true 
(5 times)

/* Lemma 4.1 (5) */
> time Q3G*eD4Q3[1]*(1-e3Q3) eq Q3G*eH*eD4Q3[1];
true
Time: 112.550

/* Lemma 4.1 (6) */
> time eH*eD4Q3[1]*Q3G eq eH*Q3G;             
true
Time: 158.040
\end{lstlisting}

Save the following program of Magma as \verb@4-1_Def-e2.txt@.
\begin{lstlisting}[language=C, frame=single, basicstyle=\ttfamily\tiny, escapechar=\@]
load "4-1_Def.txt";

e2:=QG![(ChG[2](Id(G))/#G)*QG!(ChG[2](g^(-1))):g in G];
e2Q3:=Q3G![(ChG[2](Id(G))/#G)*Q3G!(ChG[2](g^(-1))):g in G];

NumE:=2;
eE4:=[QG!0:i in [1..5]];
for i in [1..5] do for g in D4[i] do 
eE4[i]:=eE4[i]+QG!(1/#D4[i])*CharacterTable(D4[i])[NumE](g^(-1))*(QG!g); 
end for; end for;

eE4Q3:=[Q3G!0:i in [1..5]];
for i in [1..5] do for g in D4[i] do 
eE4Q3[i]:=eE4Q3[i]+Q3G!(1/#D4[i])*CharacterTable(D4[i])[NumE](g^(-1))*(Q3G!g); 
end for; end for;

NumI:=5;
eI:=Q3G!0;
for g in H do eI:=eI+Q3G!(1/#H)*ChH[NumI](g^(-1))*(Q3G!g); end for;

PQ<[T]>:=PolynomialRing(Rationals(),360);
PQG:=GroupAlgebra(PQ, G);
E:=[Eltseq(Basis(VectorSpace(Rationals(), 360))[i]): i in [1..360]];
tt:=PQG![T[i]:i in [1..360]];
\end{lstlisting}

One can confirm that the case of $\chi_2^G$ holds also 
by typing the following commands in the interactive mode of Magma V2.27-8.
\begin{lstlisting}[language=C, frame=single, basicstyle=\ttfamily\tiny, escapechar=\@]
> load "4-1_Def-e2.txt";

/* check */
> CharacterTable(D4[1])[2](s[1]);
-1
> CharacterTable(D4[1])[2](t[1]);
-1

/* Lemma 4.1 (1) for e2 */
> ChG[2] eq Induction(CharacterTable(D4[1])[NumE],G)-Induction(ChH[NumI],G);
true

/* Lemma 4.1 (2) for e2 */
> b0:=Matrix(Rationals(),1,360,[Eltseq(e2)[i]:i in [1..360]]);
> c0:=PQG![PQ!Eltseq(&+eE4)[i]:i in [1..360]];
> time a0ij:=[Evaluate(Eltseq(tt*c0)[i], E[j]): i,j in [1..360]];
Time: 762.900
> A0:=Matrix(Rationals(),360,360,a0ij);
> q0,x0:=Solution(Transpose(A0),b0);
> r0:=QG!Eltseq(q0);
> r0*(&+eE4) eq e2;
true

/* Lemma 4.1 (3) for e2*/
> time rr1:=Basis(RightAnnihilator(QG*eE4[2]+QG*eE4[3]+QG*eE4[4]+QG*eE4[5]) 
meet QG*eE4[1]);
Time: 1433.710
> eE4[1]*e2 in (eE4[1]*e2*rr1[1])*QG;
true

> time rr2:=Basis(RightAnnihilator(QG*eE4[1]+QG*eE4[3]+QG*eE4[4]+QG*eE4[5]) 
meet QG*eE4[2]);
Time: 1270.190
> eE4[2]*e2 in (eE4[2]*e2*rr2[1])*QG;
true

> time rr3:=Basis(RightAnnihilator(QG*eE4[1]+QG*eE4[2]+QG*eE4[4]+QG*eE4[5]) 
meet QG*eE4[3]);
Time: 1154.530
> eE4[3]*e2 in (eE4[3]*e2*rr3[3])*QG;
true

> time rr4:=Basis(RightAnnihilator(QG*eE4[1]+QG*eE4[2]+QG*eE4[3]+QG*eE4[5]) 
meet QG*eE4[4]);
Time: 1333.980
> eE4[4]*e2 in (eE4[4]*e2*rr4[1])*QG;
true

> time rr5:=Basis(RightAnnihilator(QG*eE4[1]+QG*eE4[2]+QG*eE4[3]+QG*eE4[4]) 
meet QG*eE4[5]);
Time: 1359.690
> eE4[5]*e2 in (eE4[5]*e2*rr5[1])*QG;
true

/* Lemma 4.1 (4) for e2 */
> for i in [1..5] do eE4[i] eq h[i]^(-1)*eE4[1]*h[i]; end for;
true (5 times)

/* Lemma 4.1 (5) for e2, Note eE4Q3[2]. */
> time Q3G*eE4Q3[2]*(1-e2Q3) eq Q3G*eI*eE4Q3[2];
true
Time: 51.280

/* Lemma 4.1 (6) for e2,  Note eE4Q3[2]. */
> time eI*eE4Q3[2]*Q3G eq eI*Q3G;
true
Time: 68.610             
\end{lstlisting}
%#!lualatex ZZpG.tex

\subsection{The program for Lemma \ref{360-6_Lem}}\label{360-6_Com}
Save the following program of Magma as \verb@4-2_Def.txt@.
\begin{lstlisting}[language=C, frame=single, basicstyle=\ttfamily\tiny, escapechar=\@]
A6:=AlternatingGroup(6);
ChA6:=CharacterTable(A6);

QG:=GroupAlgebra(Rationals(), A6);
e6:=QG![(ChA6[6](Id(A6))/#A6)*QG!ChA6[6](g^(-1)):g in A6];

S4_1:=sub<A6|(3,4)(5,6), (1,6,4)(2,3,5)>;
S4_2:=sub<A6|(1,2)(3,4), (1,3,6)(2,5,4)>;
S4_3:=sub<A6|(1,2)(4,5), (1,6,5)(2,4,3)>;
S4_4:=sub<A6|(1,2)(3,6), (1,4,3)(2,6,5)>;
S4_5:=sub<A6|(1,2)(5,6), (1,5,4)(2,3,6)>;
S4_6:=sub<A6|(2,5)(3,6), (1,5,6)(2,4,3)>;
S4_7:=sub<A6|(2,6)(3,4), (1,2,3)(4,6,5)>;
S4_8:=sub<A6|(1,3)(4,5), (1,4,2)(3,6,5)>;
S4_9:=sub<A6|(2,4)(5,6), (1,2,5)(3,6,4)>;
ChS4_1:=CharacterTable(S4_1);
S4a:=[S4_1,S4_2,S4_3,S4_4,S4_5,S4_6,S4_7,S4_8,S4_9];


eS4:=[QG!0:i in [1..9]];
for i in [1..9] do for g in S4a[i] do 
eS4[i]:=eS4[i]+QG!(1/#S4a[i])*(CharacterTable(S4a[i])[1](g^(-1)))*(QG!g); 
end for; end for;

A5a:=sub<A6|(1,5)(4,6), (1,3,2)(4,5,6)>;
ChA5a:=CharacterTable(A5a);
eA5_1:=QG!0;
for g in A5a do 
eA5_1:=eA5_1+QG!(1/#A5a)*(ChA5a[1](g^(-1)))*(QG!g); 
end for;

PQ<[T]>:=PolynomialRing(Rationals(),360);
PQG:=GroupAlgebra(PQ, A6);
E:=[Eltseq(Basis(VectorSpace(Rationals(), 360))[i]): i in [1..360]];
tt:=PQG![T[i]:i in [1..360]];
\end{lstlisting}

One can confirm that the lemma holds by typing the following commands in the interactive mode of Magma V2.27-8.
\begin{lstlisting}[language=C, frame=single, basicstyle=\ttfamily\tiny, escapechar=\@]
> load "4-2_Def.txt";

/* Lemma 4.2 (1) */
> ChA6[6] eq Induction(ChS4_1[1],A6)-Induction(ChA5a[1],A6);
true

/* Lemma 4.2 (2) */
> b0:=Matrix(Rationals(),1,360,[Eltseq(e6)[i]:i in [1..360]]);
> c0:=PQG![PQ!Eltseq(&+eS4)[i]:i in [1..360]];
> time a0ij:=[Evaluate(Eltseq(tt*c0)[i], E[j]): i,j in [1..360]];
Time: 6674.680
> AA0:=Matrix(Rationals(),360,360,a0ij);
> q0,x0:=Solution(Transpose(AA0),b0);
> r0:=QG!Eltseq(q0);
> r0*(&+eS4) eq e6;
true


/* Lemma 4.2 (3) */
> time B1:=Basis(RightAnnihilator(QG*eS4[2]+QG*eS4[3]+QG*eS4[4]+QG*eS4[5]
+QG*eS4[6]+QG*eS4[7]+QG*eS4[8]+QG*eS4[9]) meet QG*eS4[1]);
Time: 79.470
> r1:=B1[1];
> for i in [2..9] do if not(eS4[i]*r1 eq 0) then print(i); end if; end for;
> time eS4[1]*e6 in (eS4[1]*e6*r1)*QG;    
true
Time: 18.570
> b1:=Matrix(Rationals(),1,360,[Eltseq(eS4[1]*e6)[i]:i in [1..360]]);
> c1:=PQG![PQ!Eltseq(eS4[1]*e6*r1)[i]:i in [1..360]];
> time a1ij:=[Evaluate(Eltseq(c1*tt)[i], E[j]): i,j in [1..360]];
Time: 41079.400
> AA1:=Matrix(Rationals(),360,360,a1ij);
> q1,x1:=Solution(Transpose(AA1),b1);
> rr1:=QG!Eltseq(q1);
> eS4[1]*e6*r1*rr1 eq eS4[1]*e6;
true

> B2:=Basis(RightAnnihilator(QG*eS4[1]+QG*eS4[3]+QG*eS4[4]+QG*eS4[5]
+QG*eS4[6]+QG*eS4[7]+QG*eS4[8]+QG*eS4[9]) meet QG*eS4[2]);
> r2:=B2[1];
> for i in [1,3,4,5,6,7,8,9] do if not(eS4[i]*r2 eq 0) then print(i); 
end if; end for;
> eS4[2]*e6 in (eS4[2]*e6*r2)*QG;    
true

> B3:=Basis(RightAnnihilator(QG*eS4[1]+QG*eS4[2]+QG*eS4[4]+QG*eS4[5]
+QG*eS4[6]+QG*eS4[7]+QG*eS4[8]+QG*eS4[9]) meet QG*eS4[3]);
> r3:=B3[1];
> for i in [1,2,4,5,6,7,8,9] do if not(eS4[i]*r3 eq 0) then print(i); 
end if; end for;
> eS4[3]*e6 in (eS4[3]*e6*r3)*QG;    
true

> B4:=Basis(RightAnnihilator(QG*eS4[1]+QG*eS4[2]+QG*eS4[3]+QG*eS4[5]
+QG*eS4[6]+QG*eS4[7]+QG*eS4[8]+QG*eS4[9]) meet QG*eS4[4]);
> r4:=B4[1];
> for i in [1,2,3,5,6,7,8,9] do if not(eS4[i]*r4 eq 0) then print(i); 
end if; end for;
> eS4[4]*e6 in (eS4[4]*e6*r4)*QG;    
true

> B5:=Basis(RightAnnihilator(QG*eS4[1]+QG*eS4[2]+QG*eS4[3]+QG*eS4[4]
+QG*eS4[6]+QG*eS4[7]+QG*eS4[8]+QG*eS4[9]) meet QG*eS4[5]);
> #B5;
2
> r5:=B5[1];
> for i in [1,2,3,4,6,7,8,9] do if not(eS4[i]*r5 eq 0) then print(i); 
end if; end for;
> eS4[5]*e6 in (eS4[5]*e6*r5)*QG;    
true

> B6:=Basis(RightAnnihilator(QG*eS4[1]+QG*eS4[2]+QG*eS4[3]+QG*eS4[4]
+QG*eS4[5]+QG*eS4[7]+QG*eS4[8]+QG*eS4[9]) meet QG*eS4[6]);
> r6:=B6[1];
> for i in [1,2,3,4,5,7,8,9] do if not(eS4[i]*r6 eq 0) then print(i); 
end if; end for;
> eS4[6]*e6 in (eS4[6]*e6*r6)*QG;    
true

> B7:=Basis(RightAnnihilator(QG*eS4[1]+QG*eS4[2]+QG*eS4[3]+QG*eS4[4]
+QG*eS4[5]+QG*eS4[6]+QG*eS4[8]+QG*eS4[9]) meet QG*eS4[7]);
> r7:=B7[1];
> for i in [1,2,3,4,5,6,8,9] do if not(eS4[i]*r7 eq 0) then print(i); 
end if; end for;
> eS4[7]*e6 in (eS4[7]*e6*r7)*QG;    
true

> B8:=Basis(RightAnnihilator(QG*eS4[1]+QG*eS4[2]+QG*eS4[3]+QG*eS4[4]
+QG*eS4[5]+QG*eS4[6]+QG*eS4[7]+QG*eS4[9]) meet QG*eS4[8]);
> r8:=B8[1];
> for i in [1,2,3,4,5,6,7,9] do if not(eS4[i]*r8 eq 0) then print(i); 
end if; end for;
> time eS4[8]*e6 in (eS4[8]*e6*r8)*QG;    
true

> B9:=Basis(RightAnnihilator(QG*eS4[1]+QG*eS4[2]+QG*eS4[3]+QG*eS4[4]
+QG*eS4[5]+QG*eS4[6]+QG*eS4[7]+QG*eS4[8]) meet QG*eS4[9]);
> r9:=B9[1];
> for i in [1..8] do if not(eS4[i]*r9 eq 0) then print(i); 
end if; end for;
> eS4[9]*e6 in (eS4[9]*e6*r9)*QG;    
true


/* Lemma 4.2 (4) */
> for g in A6 do if (eS4[2] eq g^(-1)*eS4[1]*g) and (g^2 eq A6!1) then g; 
end if; end for;
(2, 4)(3, 5)
(1, 3)(4, 6)
(1, 5)(2, 6)

> for g in A6 do if (eS4[3] eq g^(-1)*eS4[1]*g) and (g^2 eq A6!1) then g; 
end if; end for;
(2, 4)(3, 6)
(1, 5)(3, 6)

> for g in A6 do if (eS4[4] eq g^(-1)*eS4[1]*g) and (g^2 eq A6!1) then g; 
end if; end for;
(1, 3)(4, 5)
(2, 6)(4, 5)

> for g in A6 do if (eS4[5] eq g^(-1)*eS4[1]*g) and (g^2 eq A6!1) then g; 
end if; end for;
(1, 3)(2, 4)
(1, 5)(4, 6)
(2, 6)(3, 5)

> for g in A6 do if (eS4[6] eq g^(-1)*eS4[1]*g) and (g^2 eq A6!1) then g; 
end if; end for;
(2, 4)(5, 6)
(1, 6)(3, 4)
(1, 5)(2, 3)

> for g in A6 do if (eS4[7] eq g^(-1)*eS4[1]*g) and (g^2 eq A6!1) then g; 
end if; end for;
(1, 4)(3, 6)
(2, 5)(3, 6)

> for g in A6 do if (eS4[8] eq g^(-1)*eS4[1]*g) and (g^2 eq A6!1) then g; 
end if; end for;
(1, 4)(2, 3)
(2, 5)(4, 6)
(1, 6)(3, 5)

> for g in A6 do if (eS4[9] eq g^(-1)*eS4[1]*g) and (g^2 eq A6!1) then g; 
end if; end for;
(2, 3)(4, 5)
(1, 6)(4, 5)


/* Lemma 4.2 (5) */
> QG*eS4[1]*(QG!1-e6) eq QG*eA5_1*eS4[1];
true


/* Lemma 4.2 (6) */
> eA5_1*eS4[1]*QG eq eA5_1*QG;
true
\end{lstlisting}

%#!lualatex ZZpG.tex

\subsection{The program for Lemma \ref{360-7_Lem}}\label{360-7_Com}
Save the following program of Magma as \verb@4-3_Def.txt@.
\begin{lstlisting}[language=C, frame=single, basicstyle=\ttfamily\tiny, escapechar=\@]
A6:=AlternatingGroup(6);
ChA6:=CharacterTable(A6);
Q4<z>:=CyclotomicField(4);

H1:=sub<A6|(3,5)(4,6),(1,2)(3,4,5,6),(1,4,6)>;
H:=[X: X in {H1^g:g in A6}];
ChH1:=CharacterTable(H1);

Q4A6:=GroupAlgebra(Q4, A6);
e7:=Q4A6![(ChA6[7](Id(A6))/#A6)*Q4A6!ChA6[7](g^(-1)):g in A6];

eH:=[Q4A6!0:i in [1..10]];
Num:=3;  // May depend on the version of Magma.
for i in [1..10] do
for g in H[i] do eH[i]:=
eH[i]+Q4A6!(1/#H[i])*(CharacterTable(H[i])[Num](g^(-1)))*(Q4A6!g); end for;
end for;
\end{lstlisting}

%That Lemma \ref{360-7_Lem} holds can be confirmed by the results of the following program run by Magma V2.26-10.
One can confirm that the lemma holds by typing the following commands in the interactive mode of Magma V2.26-10.
\begin{lstlisting}[language=C, frame=single, basicstyle=\ttfamily\tiny, escapechar=\@]
> load "4-3_Def.txt";
Loading "4-3_Def.txt"

/* check */
> Num;
3
> ChH1[Num](H1!(1,2)(3,4,5,6));
zeta(4)_4
> ChH1[Num](H1!(1,4,6));       
1
> ChH1[Num](H1!(3,5)(4,6));    
-1

/* Lemma 4.3 (1) */
> ChA6[7] eq Induction(ChH1[Num],A6);
true

/* Lemma 4.3 (2) */
> e7 eq &+eH;
true

/* Lemma 4.3 (3) */
> for i, j in [1..10] do if (i lt j) then eH[i]*eH[j] eq Q4A6!0; end if;end for;
true
(45 times)

/* Lemma 4.3 (4) */
> for g in A6 do if (eH[2] eq g^(-1)*eH[1]*g) and (g^2 eq A6!1) then g; 
end if; end for;
(1, 2)(3, 4)
(others)

> for g in A6 do if (eH[3] eq g^(-1)*eH[1]*g) and (g^2 eq A6!1) then g; 
end if; end for;
(1, 4)(3, 6)
(others)

> for g in A6 do if (eH[4] eq g^(-1)*eH[1]*g) and (g^2 eq A6!1) then g; 
end if; end for;                            
(2, 3)(4, 5)
(others)

> for g in A6 do if (eH[5] eq g^(-1)*eH[1]*g) and (g^2 eq A6!1) then g; 
end if; end for;
(1, 3)(4, 5)
(others)

> for g in A6 do if (eH[6] eq g^(-1)*eH[1]*g) and (g^2 eq A6!1) then g; 
end if; end for;
(1, 3)(2, 5)
(others)

> for g in A6 do if (eH[7] eq g^(-1)*eH[1]*g) and (g^2 eq A6!1) then g; 
end if; end for;
(2, 4)(3, 5)
(others)

> for g in A6 do if (eH[8] eq g^(-1)*eH[1]*g) and (g^2 eq A6!1) then g; 
end if; end for;
(1, 5)(4, 6)
(others)

> for g in A6 do if (eH[9] eq g^(-1)*eH[1]*g) and (g^2 eq A6!1) then g; 
end if; end for;
(2, 5)(3, 4)
(others)

> for g in A6 do if (eH[10] eq g^(-1)*eH[1]*g) and (g^2 eq A6!1) then g; 
end if; end for;
(3, 4)(5, 6)
(others)
\end{lstlisting}

%#!lualatex ZZpG.tex

\subsection{The program for Lemma \ref{360-4_Lem}}
\label{360_Others}
Save the following program of Magma as \verb@4-4_Def.txt@.
\begin{lstlisting}[language=C, frame=single, basicstyle=\ttfamily\tiny, escapechar=\@]
A6:=AlternatingGroup(6);
ChA6:=CharacterTable(A6);
Q15:=CyclotomicField(15);
R:=GroupAlgebra(Q15,A6);

e4:=R![(ChA6[4](Id(A6))/#A6)*R!(ChA6[4](g^(-1))):g in A6];
e5:=R![(ChA6[5](Id(A6))/#A6)*R!(ChA6[5](g^(-1))):g in A6];

H:=sub<A6|(1,2,3),(4,5,6)>;
ChH:=CharacterTable(H);

eH:=[R!0:j in [1..#H]];
for i in [1..#H] do 
for g in H do
eH[i]:=eH[i]+R!(1/9)*(ChH[i](g^(-1)))*(R!g); 
end for;
end for;
Num2:=5; Num3:=4; Num4:=7; Num5:=8;
Num6:=9; Num7:=2; Num8:=6; Num9:=3; 
 
C5:=sub<A6|(1,2,3,4,5)>;
ChC5:=CharacterTable(C5);

eC5:=[R!0:j in [1..#C5]];
for i in [1..#C5] do 
for g in C5 do
eC5[i]:=eC5[i]+R!(1/5)*(ChC5[i](g^(-1)))*(R!g); 
end for;
end for;
Num1:=5;  // May depend on the version of Magma.
\end{lstlisting}

One can confirm that the proposition holds by typing the following commands in the interactive mode of Magma V2.26-10.
\begin{lstlisting}[language=C, frame=single, basicstyle=\ttfamily\tiny, escapechar=\@]
> load "4-4_Def.txt";

/* check */
> ChH[Num2](A6!(1,2,3)),ChH[Num2](A6!(4,5,6));  
1
zeta(3)_3
> ChH[Num3](A6!(1,2,3)),ChH[Num3](A6!(4,5,6));
1
-zeta(3)_3 - 1
> ChH[Num4](A6!(1,2,3)),ChH[Num4](A6!(4,5,6));
zeta(3)_3
1
> ChH[Num5](A6!(1,2,3)),ChH[Num5](A6!(4,5,6));
zeta(3)_3
zeta(3)_3
> ChH[Num6](A6!(1,2,3)),ChH[Num6](A6!(4,5,6));
zeta(3)_3
-zeta(3)_3 - 1
> ChH[Num7](A6!(1,2,3)),ChH[Num7](A6!(4,5,6));
-zeta(3)_3 - 1
1
> ChH[Num8](A6!(1,2,3)),ChH[Num8](A6!(4,5,6));
-zeta(3)_3 - 1
zeta(3)_3
> ChH[Num9](A6!(1,2,3)),ChH[Num9](A6!(4,5,6));
-zeta(3)_3 - 1
-zeta(3)_3 - 1


/* Lemma 4.4 (1) */
> ChA6[4] eq Induction(ChH[Num2], A6) + Induction(ChH[Num5], A6) 
- Induction(ChC5[Num1], A6);
true

/* Lemma 4.4 (2) */
> e4 eq (&+eH - eH[1])*e4;
true
> e5 eq (&+eH - eH[1])*e5;
true

/* Lemma 4.4 (3) */
> for g in A6 do if (eH[Num5] eq g^(-1)*eH[Num2]*g) then g; end if; end for;
(nothing is displayed)

> for g in A6 do if (eH[Num3] eq g^(-1)*eH[Num2]*g) then g; end if; end for;
(2, 3)(4, 5)
(others)
> eH[Num9] eq (A6!(2,3)(4,5))^(-1)*eH[Num5]*(A6!(2,3)(4,5))^(-1);
true

> for g in A6 do if (eH[Num4] eq g^(-1)*eH[Num2]*g) then g; end if; end for;   
(1, 4)(2, 6, 3, 5)
(others)
> eH[Num6] eq (A6!(1, 4)(2, 6, 3, 5))^(-1)*eH[Num5]*(A6!(1, 4)(2, 6, 3, 5));   
true

> for g in A6 do if (eH[Num7] eq g^(-1)*eH[Num2]*g) then g; end if; end for;
(1, 4, 2, 5)(3, 6)
(others)
> eH[Num8] eq (A6!(1, 4, 2, 5)(3, 6))^(-1)*eH[Num5]*(A6!(1, 4, 2, 5)(3, 6));
true


/* Lemma 4.4 (4) */
> time R*eH[Num2] eq R*eC5[Num1]*eH[Num2];
true
Time: 74.870
> time R*eH[Num5] eq R*eC5[Num1]*eH[Num5];
true
Time: 68.020
\end{lstlisting}

% bibliography -----------------------------------------------------

\end{document}